\newtheorem{theorem}{Theorem}[section]
\newtheorem{lemma}[theorem]{Lemma}
\newtheorem{proposition}[theorem]{Proposition}
\newtheorem{corollary}[theorem]{Corollary}
\newtheorem{exAux}[theorem]{Example}
\newenvironment{example}{\begin{exAux} \rm}{\end{exAux}}
\newtheorem{Def}[theorem]{Definition}
\newenvironment{definition}{\begin{Def} \rm}{\end{Def}}
\newtheorem{Note}[theorem]{Note}
\newenvironment{note}{\begin{Note} \rm}{\end{Note}}
\newtheorem{Problem}[theorem]{Problem}
\newenvironment{problem}{\begin{Problem} \rm}{\end{Problem}}
\newtheorem{Rem}[theorem]{Remark}
\newtheorem{Not}[theorem]{Notation}
\newtheorem{Conj}[theorem]{Conjecture}
\newenvironment{conjecture}{\begin{Conj}}{\end{Conj}}
\newtheorem{Ass}[theorem]{Assumption}
\newenvironment{assumption}{\begin{Ass} \rm}{\end{Ass}}
\newenvironment{proof}{\medskip\noindent{\bf Proof.\ }}{\qed\medskip}
\newcommand{\qed}{\hfill\mbox{$\Box$\qquad\qquad}}
\newcommand{\ve}{\varepsilon}
\renewcommand{\b}[1]{\langle #1 \rangle}
\newcommand{\vphi}{y}
\renewcommand{\th}{\theta}
\newcommand{\be}{\beta}
\newcommand{\K}{\mathbb{F}}
\renewcommand{\indent}{\hspace{6mm}}
\begin{document}
\thispagestyle{empty}

\begin{center}
\LARGE \bf
Tridiagonal pairs and
the $\mu$-conjecture
\end{center}

\smallskip

\begin{center}
\Large
Kazumasa Nomura and 
Paul Terwilliger\footnote{This author gratefully acknowledges 
support from the FY2007 JSPS Invitation Fellowship Program
for Reseach in Japan (Long-Term), grant L-07512.}
\end{center}

\smallskip

\begin{quote}
\small 
\begin{center}
\bf Abstract
\end{center}
\indent
Let $\K$ denote a field and let $V$ denote a vector space over $\K$ with 
finite positive dimension.
We consider a pair of linear transformations $A:V \to V$
and $A^*:V \to V$ that satisfy the following conditions:
(i)
each of $A,A^*$ is diagonalizable;
(ii)
there exists an ordering $\lbrace V_i\rbrace_{i=0}^d$ of the eigenspaces of 
$A$ such that
$A^* V_i \subseteq V_{i-1} + V_{i} + V_{i+1}$ for $0 \leq i \leq d$,
where $V_{-1}=0$ and $V_{d+1}=0$;
(iii)
there exists an ordering $\lbrace V^*_i\rbrace_{i=0}^\delta$ 
of the eigenspaces of $A^*$ such that
$A V^*_i \subseteq V^*_{i-1} + V^*_{i} + V^*_{i+1}$ for
 $0 \leq i \leq \delta$,
where $V^*_{-1}=0$ and $V^*_{\delta+1}=0$;
(iv) 
there is no subspace $W$ of $V$ such that
$AW \subseteq W$, $A^* W \subseteq W$, $W \neq 0$, $W \neq V$.
We call such a pair a {\it tridiagonal pair} on $V$.
It is known that $d=\delta$ and for $0 \leq i \leq d$
the dimensions of $V_i$, $V_{d-i}$, $V^*_i$, $V^*_{d-i}$ coincide.
We say the pair $A,A^*$ is {\it sharp} whenever $\dim V_0=1$.
It is known that if 
$\K$ is algebraically closed then $A,A^*$ is sharp.
A conjectured classification of the sharp tridiagonal pairs
was recently introduced by T. Ito and the second author.
We present a result which supports the conjecture.
Given scalars $\{\th_i\}_{i=0}^d$, $\{\th^*_i\}_{i=0}^d$ in $\K$ that 
satisfy the known constraints on the eigenvalues of a tridiagonal pair,
we define an $\K$-algebra $T$ by generators and relations.
We consider the $\K$-algebra $e^*_0Te^*_0$ for a certain idempotent 
$e^*_0 \in T$.
Let $\K[x_1,\ldots,x_d]$ denote the polynomial algebra over $\K$ involving 
$d$ mutually commuting indeterminates.
We display a surjective $\K$-algebra homomorphism
$\mu: \K[x_1,\ldots,x_d] \to e^*_0Te^*_0$.
We conjecture that $\mu$ is an isomorphism.
We show that this $\mu$-conjecture implies the classification conjecture, 
and that  the $\mu$-conjecture holds for $d\leq 5$.

\bigskip
\noindent
{\bf Keywords}. 
Tridiagonal pair, Leonard pair, $q$-Racah polynomial.

\noindent 
{\bf 2000 Mathematics Subject Classification}. 
Primary: 15A21. Secondary: 
05E30, 05E35, 17Bxx.
\end{quote}

\section{Tridiagonal pairs}

\indent
Throughout this paper $\K$ denotes a field.

\medskip

We begin by recalling the notion of a tridiagonal pair. 
We will use the following terms.
Let $V$ denote a vector space over $\K$ with finite positive dimension.
For a linear transformation $A:V\to V$ and a subspace $W \subseteq V$,
we call $W$ an {\em eigenspace} of $A$ whenever $W\not=0$ and there exists 
$\th \in \K$ such that $W=\{v \in V \,|\, Av = \th v\}$;
in this case $\th$ is the {\em eigenvalue} of $A$ associated with $W$.
We say that $A$ is {\em diagonalizable} whenever $V$ is spanned by the 
eigenspaces of $A$.

\begin{definition}  {\rm \cite[Definition 1.1]{TD00}} \label{def:tdp}  \samepage
Let $V$ denote a vector space over $\K$ with finite positive dimension. 
By a {\em tridiagonal pair} on $V$ we mean an ordered pair of linear 
transformations $A:V \to V$ and $A^*:V \to V$ that satisfy the following 
four conditions.
\begin{enumerate}
\item 
Each of $A,A^*$ is diagonalizable.
\item 
There exists an ordering $\{V_i\}_{i=0}^d$ of the eigenspaces of $A$ 
such that 
\begin{equation}               \label{eq:t1}
A^* V_i \subseteq V_{i-1} + V_i+ V_{i+1} \qquad \qquad (0 \leq i \leq d),
\end{equation}
where $V_{-1} = 0$ and $V_{d+1}= 0$.
\item
There exists an ordering $\{V^*_i\}_{i=0}^{\delta}$ of the eigenspaces of 
$A^*$ such that 
\begin{equation}                \label{eq:t2}
A V^*_i \subseteq V^*_{i-1} + V^*_i+ V^*_{i+1} 
\qquad \qquad (0 \leq i \leq \delta),
\end{equation}
where $V^*_{-1} = 0$ and $V^*_{\delta+1}= 0$.
\item 
There does not exist a subspace $W$ of $V$ such  that $AW\subseteq W$,
$A^*W\subseteq W$, $W\not=0$, $W\not=V$.
\end{enumerate}
We say the pair $A,A^*$ is {\it over $\K$}.
We call $V$ the {\it vector space underlying $A,A^*$}.
\end{definition}

\begin{note}   \label{note:star}        \samepage
According to a common notational convention $A^*$ denotes 
the conjugate-transpose of $A$. We are not using this convention.
In a tridiagonal pair $A,A^*$ the linear transformations $A$ and $A^*$
are arbitrary subject to (i)--(iv) above.
\end{note}

\medskip
 
We now summarize what is known about tridiagonal pairs.
Let $A,A^*$ denote a tridiagonal pair on $V$, as in Definition \ref{def:tdp}. 
By \cite[Lemma 4.5]{TD00} the integers $d$ and $\delta$ from (ii), (iii) are 
equal; we call this common value the {\em diameter} of the pair.
By \cite[Theorem 10.1]{TD00} the pair $A, A^*$ satisfy two polynomial
equations called the tridiagonal relations;
these generalize the $q$-Serre relations   \cite[Example~3.6]{qSerre}
and the Dolan-Grady relations   \cite[Example~3.2]{qSerre}.
See \cite{bas1,bas2,bas3,bas4,bas5,bas6,bas7,N:aw,qSerre,aw}
for results on the tridiagonal relations.
An ordering of the eigenspaces of $A$ (resp. $A^*$)
is said to be {\em standard} whenever it satisfies
\eqref{eq:t1} (resp. \eqref{eq:t2}). We comment on the uniqueness of the
standard ordering. Let $\{V_i\}_{i=0}^d$ denote a standard
ordering of the eigenspaces of $A$. By \cite[Lemma 2.4]{TD00}, the ordering
$\{V_{d-i}\}_{i=0}^d$ is also standard and no further ordering
is standard. A similar result holds for the eigenspaces
of $A^*$. Let $\{V_i\}_{i=0}^d$ (resp. $\{V^*_i\}_{i=0}^d$) 
denote a standard ordering of the eigenspaces of $A$ (resp. $A^*$).
By \cite[Corollary 5.7]{TD00}, for $0 \leq i \leq d$ the spaces $V_i$, $V^*_i$
have the same dimension; we denote this common dimension by $\rho_i$. 
By \cite[Corollaries 5.7, 6.6]{TD00} the sequence $\{\rho_i\}_{i=0}^d$ is 
symmetric and unimodal;
that is $\rho_i=\rho_{d-i}$ for $0 \leq i \leq d$ and
$\rho_{i-1} \leq \rho_i$ for $1 \leq i \leq d/2$.
We call the sequence $\{\rho_i\}_{i=0}^d$ the {\em shape} of $A,A^*$.
See \cite{shape, NN, IT:Krawt, IT:aug, nom4, N:refine, nomsplit}
for results on the shape.  
We say $A,A^*$ is {\it sharp} whenever $\rho_0=1$.
By \cite[Theorem~1.3]{nomstructure}, if $\K$ is algebraically closed then
$A,A^*$ is sharp.
By \cite[Theorem~1.4]{nomstructure}, if $A,A^*$ is sharp then there exists a 
nondegenerate symmetric bilinear form $\b{\,,\,}$ on $V$ such that
$\b{Au,v}=\b{u,Av}$ and $\b{A^*u,v}=\b{u,A^*v}$ for all $u,v \in V$.
See \cite{CurtH,nomsharp} for results on the bilinear form.
The following special cases of tridiagonal pairs have been studied extensively.
In \cite{Vidar} the tridiagonal pairs of shape $(1,2,1)$ are classified and 
described in detail.
The tridiagonal pairs of shape $(1,1,\ldots,1)$ are called {\em Leonard pairs}
\cite[Definition 1.1]{LS99}, and these are classified in \cite{LS99,TLT:array}.
This classification yields a correspondence between the Leonard pairs and a
family of orthogonal polynomials consisting of the $q$-Racah polynomials
and their relatives \cite{AWil,qrac}.
This family coincides with the terminating branch of the Askey scheme 
\cite{KoeSwa}.
See \cite{NT:balanced,NT:formula,NT:det,NT:mu,NT:span,NT:switch,madrid}
and the references therein for results on Leonard pairs.
For the above tridiagonal pair $A,A^*$ and for $0 \leq i \leq d$ let $\th_i$ 
(resp. $\th^*_i$) denote the eigenvalue of $A$ (resp. $A^*$) associated with 
$V_i$ (resp. $V^*_i$).
The pair $A,A^*$ is said to have {\em Krawtchouk type}
(resp. {\em $q$-geometric type})
whenever $\th_i=d-2i$ (resp. $\th_i=q^{d-2i}$)
and $\th^*_i=d-2i$ (resp. $\th^*_i=q^{d-2i}$) for $0 \leq i \leq d$.
In \cite[Theorems 1.7, 1.8]{Ha} the tridiagonal pairs of Krawtchouk type are 
classified.
By \cite[Remark~1.9]{Ha} these tridiagonal pairs  are in bijection with the 
finite-dimensional irreducible modules for the  three-point loop algebra
$\mathfrak{sl}_2 \otimes \K[t,t^{-1}, (t-1)^{-1}]$.
See \cite{Ev, IT:Krawt} for results on  tridiagonal pairs of Krawtchouk type.
In \cite[Theorems~1.6,~1.7]{NN} the $q$-geometric tridiagonal pairs are
classified.
By \cite[Theorems~10.3,~10.4]{qtet} these tridiagonal pairs are in bijection 
with the type $1$, finite-dimensional, irreducible modules for the algebra 
$\boxtimes_q$; this is a $q$-deformation of 
$\mathfrak{sl}_2 \otimes \K[t,t^{-1}, (t-1)^{-1}]$ as explained in \cite{qtet}.
See \cite{hasan,hasan2,shape,tdanduq,NN,ITdrg} for results  on  $q$-geometric
tridiagonal pairs.

\medskip

We now summarize the present paper.
A conjectured classification of the sharp tridiagonal pairs was introduced 
in \cite[Conjecture~14.6]{IT:Krawt} and studied carefully in \cite{nomsharp}; 
see  Conjecture \ref{conj:main} below. 
In the present paper we obtain two results which clarify the conjecture and 
provide some more evidence that it is true.
To describe these results, we start with a sequence of scalars 
$(\{\th_i\}_{i=0}^d$, $\{\th^*_i\}_{i=0}^d)$
taken from $\K$ that satisfy the known constraints on the eigenvalues of a 
tridiagonal pair over $\K$;
these are conditions (i) and (iii) in Conjecture \ref{conj:main}.
We associate with this sequence an $\K$-algebra $T$ defined
by generators and relations; $T$ is reminiscent of
an algebra introduced by E. Egge \cite[Definition 4.1]{Egge}.
We are interested in the $\K$-algebra $e^*_0Te^*_0$ where $e^*_0$ is a certain 
idempotent element of $T$.
Let $\lbrace x_i\rbrace_{i=1}^d$ denote mutually commuting indeterminates.
Let $\K[x_1,\ldots,x_d]$ denote the $\K$-algebra consisting of the polynomials
in $\{x_i\}_{i=1}^d$ that have all coefficients in $\K$.
We display a surjective $\K$-algebra homomorphism
$\mu: \K[x_1,\ldots,x_d] \to e^*_0Te^*_0$.
We conjecture that $\mu$ is an isomorphism; let us call this
the {\em $\mu$-conjecture}.
Our two main results are that the $\mu$-conjecture
implies the classification conjecture, 
and that the $\mu$-conjecture holds for $d\leq 5$.
These results are contained in Theorems \ref{thm:main} and \ref{thm:mainpd5}.

\section{Tridiagonal systems}

\indent
When working with a tridiagonal pair, it is often convenient to consider
a closely related object called a tridiagonal system.
To define a tridiagonal system, we recall a few concepts from linear algebra.
Let $V$ denote a vector space over $\K$ with finite positive dimension.
Let ${\rm End}(V)$ denote the $\K$-algebra of all linear
transformations from $V$ to $V$.
Let $A$ denote a diagonalizable element of $\text{End}(V)$.
Let $\{V_i\}_{i=0}^d$ denote an ordering of the eigenspaces of $A$
and let $\{\th_i\}_{i=0}^d$ denote the corresponding ordering of the 
eigenvalues of $A$.
For $0 \leq i \leq d$ define $E_i \in \text{End}(V)$ such that 
$(E_i-I)V_i=0$ and $E_iV_j=0$ for $j \neq i$ $(0 \leq j \leq d)$.
Here $I$ denotes the identity of $\mbox{\rm End}(V)$.
We call $E_i$ the {\em primitive idempotent} of $A$ corresponding to $V_i$
(or $\th_i$).
Observe that
(i) $\sum_{i=0}^d E_i = I$;
(ii) $E_iE_j=\delta_{i,j}E_i$ $(0 \leq i,j \leq d)$;
(iii) $V_i=E_iV$ $(0 \leq i \leq d)$;
(iv) $A=\sum_{i=0}^d \theta_i E_i$.
Moreover
\begin{equation}         \label{eq:defEi}
  E_i=\prod_{\stackrel{0 \leq j \leq d}{j \neq i}}
          \frac{A-\theta_jI}{\theta_i-\theta_j}.
\end{equation}
Note that each of $\{A^i\}_{i=0}^d$, $\{E_i\}_{i=0}^d$ is a basis for the 
$\K$-subalgebra of $\mbox{\rm End}(V)$ generated by $A$.
Moreover $\prod_{i=0}^d(A-\theta_iI)=0$.
Now let $A,A^*$ denote a tridiagonal pair on $V$.
An ordering of the primitive idempotents or eigenvalues of $A$ (resp. $A^*$)
is said to be {\em standard} whenever the corresponding ordering of the 
eigenspaces of $A$ (resp. $A^*$) is standard.

\medskip

\begin{definition} \cite[Definition 2.1]{TD00} \label{def:TDsystem}  \samepage
Let $V$ denote a vector space over $\K$ with finite positive dimension.
By a {\em tridiagonal system} on $V$ we mean a sequence
\[
 \Phi=(A;\{E_i\}_{i=0}^d;A^*;\{E^*_i\}_{i=0}^d)
\]
that satisfies (i)--(iii) below.
\begin{itemize}
\item[(i)]
$A,A^*$ is a tridiagonal pair on $V$.
\item[(ii)]
$\{E_i\}_{i=0}^d$ is a standard ordering
of the primitive idempotents of $A$.
\item[(iii)]
$\{E^*_i\}_{i=0}^d$ is a standard ordering
of the primitive idempotents of $A^*$.
\end{itemize}
We say $\Phi$ is {\em over} $\K$.
We call $V$ the {\it vector space underlying $\Phi$}.
\end{definition}

\medskip

The notion of isomorphism for tridiagonal systems
is defined in \cite[Definition 3.1]{nomsharp}.

\medskip

The following result is immediate from lines (\ref{eq:t1}),  (\ref{eq:t2})
and Definition \ref{def:TDsystem}.

\medskip

\begin{lemma} {\rm \cite[Lemma 2.5]{nomtowards}}  \label{lem:trid}  \samepage
Let $(A;\{E_i\}_{i=0}^d;A^*;\{E^*_i\}_{i=0}^d)$ denote a tridiagonal system.
Then for $0 \leq i,j,k \leq d$ the following {\rm (i)}, {\rm (ii)} hold.
\begin{itemize}
\item[\rm (i)]
$E^*_i A^k E^*_j =0\;\;$ if $k<|i-j|$.
\item[\rm (ii)]
$E_i A^{*k} E_j = 0\;\;$ if  $k<|i-j|$.
\end{itemize}
\end{lemma}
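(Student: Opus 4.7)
The plan is to prove part (i) by induction on $k$; part (ii) then follows by symmetry (swapping the roles of $A,A^*$ and $E_i,E^*_i$).

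\medskip

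First I would establish the base case $k=0$. Since $\{E^*_i\}_{i=0}^d$ are primitive idempotents, $E^*_iE^*_j=\delta_{i,j}E^*_i$, so $E^*_iE^*_j=0$ whenever $0<|i-j|$, which handles $k=0$.

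\medskip

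Next I would record the case $k=1$ as a separate ingredient. Using Definition \ref{def:TDsystem} together with the standard ordering condition \eqref{eq:t2}, for any $v \in V^*_j = E^*_jV$ we have $Av \in V^*_{j-1}+V^*_j+V^*_{j+1}$, so $E^*_m A v = 0$ whenever $|m-j|>1$. This gives
\[
E^*_m A E^*_j = 0 \qquad \text{if } |m-j|>1,
\]
which is exactly the statement of (i) for $k=1$.

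\medskip

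For the inductive step, assume the claim for $k-1$ and consider $E^*_iA^kE^*_j$ with $k<|i-j|$. Inserting $I=\sum_{m=0}^d E^*_m$ between $A^{k-1}$ and $A$ gives
\[
E^*_iA^kE^*_j=\sum_{m=0}^d E^*_iA^{k-1}E^*_m\, A E^*_j.
\]
By the $k=1$ case, only terms with $|m-j|\le 1$ can contribute. For each such $m$ the triangle inequality gives $|i-m|\ge |i-j|-1 > k-1$, so by the induction hypothesis $E^*_iA^{k-1}E^*_m=0$. Hence every summand vanishes, completing the induction.

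\medskip

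There is no real obstacle here; the only point that needs a little care is the base $k=1$ case, where one must correctly translate the subspace inclusion \eqref{eq:t2} into a vanishing statement for the primitive idempotents of $A^*$. Part (ii) is obtained by applying the same argument to the tridiagonal system $(A^*;\{E^*_i\}_{i=0}^d;A;\{E_i\}_{i=0}^d)$, using \eqref{eq:t1} in place of \eqref{eq:t2}.
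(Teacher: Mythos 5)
Your proof is correct; the paper itself gives no argument here, simply declaring the lemma immediate from \eqref{eq:t1}, \eqref{eq:t2} and Definition \ref{def:TDsystem} (citing the earlier reference), and your induction on $k$ with the resolution of the identity $I=\sum_m E^*_m$ is exactly the standard way of spelling that out. No gaps; the symmetry reduction of (ii) to (i) is also fine.
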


\begin{definition}        \label{def}        \samepage
Let $\Phi=(A;\{E_i\}_{i=0}^d;A^*;\{E^*_i\}_{i=0}^d)$ denote a tridiagonal 
system on $V$.
For $0 \leq i \leq d$ let $\th_i$ (resp. $\th^*_i$) denote the eigenvalue of 
$A$ (resp. $A^*$) associated with the eigenspace $E_iV$ (resp. $E^*_iV$).
We call $\{\th_i\}_{i=0}^d$ (resp. $\{\th^*_i\}_{i=0}^d$)
the {\em eigenvalue sequence} (resp. {\em dual eigenvalue sequence}) of $\Phi$.
We observe that $\{\th_i\}_{i=0}^d$ (resp. $\{\th^*_i\}_{i=0}^d$) are mutually 
distinct and contained in $\K$. 
We say $\Phi$ is {\it sharp} whenever the tridiagonal pair $A,A^*$ is sharp.
\end{definition}

\medskip

We now recall the split sequence of a tridiagonal system.
We will use the following notation.

\medskip

\begin{definition}  \label{def:tau}    \samepage
Let $\lambda$ denote an indeterminate and let $\K[\lambda]$ denote the 
$\K$-algebra consisting of the polynomials in $\lambda$ that have 
all coefficients in $\K$.
Let $d$ denote a nonnegative integer and let
$(\{\th_i\}_{i=0}^d; \{\th^*_i\}_{i=0}^d)$ denote a sequence of scalars 
taken from $\K$.
Then for $0 \leq i \leq d$ we define the following polynomials in $\K[\lambda]$:
\begin{align*}
 \tau_i &= (\lambda-\th_0)(\lambda-\th_1)\cdots(\lambda -\th_{i-1}), \\
 \eta_i &= (\lambda-\th_d)(\lambda-\th_{d-1})\cdots(\lambda-\th_{d-i+1}),  \\
 \tau^*_i &= (\lambda-\th^*_0)(\lambda-\th^*_1)\cdots(\lambda-\th^*_{i-1}), \\
 \eta^*_i &= (\lambda-\th^*_d)(\lambda-\th^*_{d-1})\cdots(\lambda-\th^*_{d-i+1}).
\end{align*}
Note that each of $\tau_i$, $\eta_i$, $\tau^*_i$, $\eta^*_i$ is monic with 
degree $i$.
\end{definition}

\medskip

The following definition of the split sequence is
motivated by \cite[Lemma 5.4]{nomstructure}.

\medskip

\begin{definition}  \label{def:split} \samepage
Let $(A; \{E_i\}_{i=0}^d; A^*; \{E^*_i\}_{i=0}^d)$ denote a sharp tridiagonal 
system over $\K$, with eigenvalue sequence $\{\th_i\}_{i=0}^d$
and dual eigenvalue sequence $\{\th^*_i\}_{i=0}^d$.
By \cite[Lemma 5.4]{nomstructure},
for $0 \leq i \leq d$ there exists a unique $\zeta_i \in \K$ such that 
\[
E^*_0 \tau_i(A) E^*_0 = 
\frac{\zeta_i E^*_0}
{(\theta^*_0-\theta^*_1)(\theta^*_0-\theta^*_2)\cdots(\theta^*_0-\theta^*_i)}. 
\]
We note that $\zeta_0=1$.
We call $\lbrace \zeta_i \rbrace_{i=0}^d$ the {\em split sequence} of the 
tridiagonal system.
\end{definition}

\begin{definition} \cite[Definition 6.2]{nomsharp}   \samepage
Let $\Phi$ denote a sharp tridiagonal system.
By the {\em parameter array} of $\Phi$ we mean the sequence
 $(\{\theta_i\}_{i=0}^d; \{\theta^*_i\}_{i=0}^d; \{\zeta_i\}_{i=0}^d)$
where  $\{\theta_i\}_{i=0}^d$ (resp. $\{\theta^*_i\}_{i=0}^d$) is the 
eigenvalue sequence (resp. dual eigenvalue sequence) of $\Phi$ and
$\{\zeta_i\}_{i=0}^d$ is the split sequence of $\Phi$.
\end{definition}

\section{The classification conjecture}

\indent
In this section we discuss a conjectured classification of the sharp 
tridiagonal systems. 

\begin{conjecture} {\rm \cite[Conjecture~14.6]{IT:Krawt}}\label{conj:main}
\samepage  
Let $d$ denote a nonnegative integer and let
\begin{equation}         \label{eq:parray}
 (\{\th_i\}_{i=0}^d; \{\th^*_i\}_{i=0}^d; \{\zeta_i\}_{i=0}^d)
\end{equation}
denote a sequence of scalars taken from $\K$.
Then there exists a sharp tridiagonal system $\Phi$ over $\K$ with parameter 
array \eqref{eq:parray} if and only if {\rm (i)}--{\rm (iii)} hold below.
\begin{itemize}
\item[\rm (i)]
$\th_i \neq \th_j$, $\th^*_i \neq \th^*_j$ if $i \neq j$ $(0 \leq i,j \leq d)$.
\item[\rm (ii)]
$\zeta_0=1$, $\zeta_d \neq 0$, and
\begin{equation}  \label{eq:ineq}
 \sum_{i=0}^d \eta_{d-i}(\th_0)\eta^*_{d-i}(\th^*_0) \zeta_i \neq 0.
\end{equation}
\item[\rm (iii)]
The expressions
\begin{equation}              \label{eq:indep}
  \frac{\th_{i-2}-\th_{i+1}}{\th_{i-1}-\th_i},  \qquad\qquad
  \frac{\th^*_{i-2}-\th^*_{i+1}}{\th^*_{i-1}-\th^*_i}
\end{equation}
are equal and independent of $i$ for $2 \leq i \leq d-1$.
\end{itemize}
Suppose {\rm (i)}--{\rm (iii)} hold. Then $\Phi$ is unique up to isomorphism of
tridiagonal systems.
\end{conjecture}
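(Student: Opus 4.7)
The plan is to reduce the conjecture to the $\mu$-conjecture announced in the abstract, which converts the ad hoc existence/uniqueness problem into a single algebraic statement about a commutative quotient. For \emph{necessity}, conditions (i) and (iii) come from the general structure theory of tridiagonal pairs in \cite{TD00}: (i) is immediate (distinct eigenspaces of a diagonalizable operator have distinct eigenvalues), and (iii) is the $\beta$-recursion on eigenvalues forced by the tridiagonal relations \cite[Theorem~10.1]{TD00}. Condition $\zeta_0=1$ is built into Definition~\ref{def:split}; I expect $\zeta_d\neq 0$ and the sum inequality \eqref{eq:ineq} to drop out by applying the nondegenerate symmetric bilinear form from \cite[Theorem~1.4]{nomstructure} to vectors of the form $\eta_{d-i}(A)E^*_0 v$ and translating nondegeneracy of the pairing between $E^*_0 V$ and $E_0 V$ into the stated nonvanishing.

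For \emph{sufficiency}, I would build a universal $\K$-algebra $T$ on generators playing the roles of $A$, $A^*$, and the primitive idempotents $E_i,E^*_i$, subject to the relations that force any $T$-module to carry a candidate tridiagonal structure with the prescribed eigenvalues (orthogonality and completeness of the idempotents, the eigenvalue equations, and the tridiagonal inclusions in both bases). The sharpness hypothesis suggests that the corner $e^*_0 T e^*_0$ is commutative, so the scaled elements $e^*_0\tau_i(A)e^*_0$ yield a surjection $\mu:\K[x_1,\ldots,x_d]\twoheadrightarrow e^*_0Te^*_0$. Granting the $\mu$-conjecture, the split sequence determines a $\K$-algebra character $\chi:e^*_0Te^*_0\to\K$ by $\chi(x_i)=\zeta_i$, and I form the induced $T$-module $M=Te^*_0\otimes_{e^*_0Te^*_0}\K_\chi$. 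The remaining tasks are: (a) to show $M$ is finite-dimensional with the correct shape, (b) to pass to an irreducible quotient on which $A,A^*$ form a genuine tridiagonal pair via Definition~\ref{def:tdp}(iv), and (c) to verify that its parameter array is the prescribed one. The nonvanishing \eqref{eq:ineq} enters at step (a)/(b) to rule out collapse of $M$ to a smaller diameter.

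\emph{Uniqueness} follows from the same framework: any sharp system $\Phi$ with parameter array \eqref{eq:parray} makes $V$ into a $T$-module in which $E^*_0 V$ is one-dimensional, and by Definition~\ref{def:split} the corner $e^*_0Te^*_0$ acts on $E^*_0 V$ via exactly the character $\chi$. Since condition~(iv) of Definition~\ref{def:tdp} forces $V$ to be generated from $E^*_0 V$ as a $T$-module, $V$ is a quotient of the induced module $M$ above, and a dimension count gives $V\cong M$. The principal obstacle is the $\mu$-conjecture itself: surjectivity of $\mu$ is constructive and belongs to the present paper, but injectivity requires showing that no polynomial relation among the $e^*_0\tau_i(A)e^*_0$ is forced by the defining relations of $T$. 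The natural route is to exhibit, for every nonzero $f\in\K[x_1,\ldots,x_d]$, a concrete tridiagonal system on which $\mu(f)$ acts nontrivially; this finite-dimensional witnessing is feasible for $d\le 5$, while the general case remains open.
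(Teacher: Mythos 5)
Your proposal follows essentially the same route as the paper: the statement is a conjecture that the paper establishes only conditionally, by showing that the $\mu$-conjecture implies it (Theorem~\ref{thm:main}, via the left ideal $J$, the module $M=T/J$ --- which coincides with your induced module $Te^*_0\otimes_{e^*_0Te^*_0}\K_\chi$ --- and its irreducible quotient $L=M/M'$, with \eqref{eq:ineq} and $\zeta_d\neq 0$ entering exactly where you say, to keep $e^*_0e_0e^*_0$ and $e^*_0e_de^*_0$ from vanishing and hence the diameter from collapsing) and by verifying the $\mu$-conjecture for $d\le 5$ (Theorem~\ref{thm:mainpd5}); necessity and uniqueness are quoted from prior work just as you indicate. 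The only slips are cosmetic: the character must send $x_i\mapsto \zeta_i/\bigl((\th^*_0-\th^*_1)\cdots(\th^*_0-\th^*_i)\bigr)$ rather than $\zeta_i$, and a given sharp system is isomorphic to the irreducible quotient $L$, not to $M$ itself.
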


\medskip

In \cite[Section~8]{nomsharp} we proved the ``only if'' direction of
Conjecture \ref{conj:main}.
In \cite[Theorem~1.6]{nomstructure} we proved the last assertion of 
Conjecture \ref{conj:main}.
In this paper we consider what is involved in proving the rest of 
Conjecture \ref{conj:main}.
We are going to define a certain $\K$-algebra $T$ by generators and relations,
and consider the $\K$-algebra $e^*_0Te^*_0$ for a certain idempotent 
$e^*_0 \in T$.
We will state a conjecture about $e^*_0Te^*_0$ called the $\mu$-conjecture.
The $\mu$-conjecture asserts, roughly speaking, that  $e^*_0Te^*_0$ is 
isomorphic to the algebra of all polynomials over $\K$ involving $d$ mutually 
commuting indeterminates.
In Section 10 we show that the $\mu$-conjecture implies Conjecture 
\ref{conj:main}.
In Section 12 we show that the $\mu$-conjecture holds for $d\leq 5$.

\section{The algebra $T$}

\indent
In this section we recall the algebra $T$ from \cite{nomstructure}.
From now until the end of Section 6 let $d$ denote a nonnegative integer and let
$(\{\th_i\}_{i=0}^d; \{\th^*_i\}_{i=0}^d)$
denote a sequence of scalars taken from $\K$ that satisfy conditions
{\rm (i)} and {\rm (iii)} of Conjecture \ref{conj:main}. 

\medskip

The following algebra is reminiscent of an algebra
introduced by E. Egge \cite[Definition 4.1]{Egge}.

\medskip

\begin{definition}{\rm \cite[Definition 2.4]{nomstructure}}\label{def:T}
\samepage
Let $T$ denote the associative $ \K$-algebra with $1$, defined by generators
$a$, $\{e_i\}_{i=0}^d$, $a^*$, $\{e^*_i\}_{i=0}^d$ and relations
\begin{equation}                            \label{eq:eiej}
  e_ie_j=\delta_{i,j}e_i, \qquad 
  e^*_ie^*_j=\delta_{i,j}e^*_i \qquad\qquad 
  (0 \leq i,j \leq d),
\end{equation}
\begin{equation}                            \label{eq:sumei}
  \sum_{i=0}^d e_i=1, \qquad\qquad
  \sum_{i=0}^d e^*_i=1,
\end{equation}
\begin{equation}                                  \label{eq:sumthiei}
   a = \sum_{i=0}^d \theta_ie_i, \qquad\qquad
   a^* = \sum_{i=0}^d \theta^*_i e^*_i,
\end{equation}
\begin{equation}                                     \label{eq:esiakesj}
 e^*_i a^k e^*_j = 0  \qquad \text{if $\;k<|i-j|$}
                  \qquad\qquad (0 \leq i,j,k \leq d),
\end{equation}
\begin{equation}                                      \label{eq:eiaskej}
 e_i {a^*}^k e_j = 0  \qquad \text{if $\;k<|i-j|$} 
                  \qquad\qquad (0 \leq i,j,k \leq d). 
\end{equation}
Let $D$ (resp. $D^*$) denote the $ \K$-subalgebra
of $T$ generated by $a$ (resp. $a^*$).
\end{definition}

\medskip

We now give bases for the $\K$-vector spaces  $D$ and $D^*$.

\medskip

\begin{lemma}  \label{lem:injection}   \samepage
With reference to Definition {\rm \ref{def:T}} the following {\rm (i)}, {\rm (ii)} 
hold.
\begin{itemize}
\item[\rm (i)]
Each of $\{a^i\}_{i=0}^d$, $\{e_i\}_{i=0}^d$ is a basis for $D$.
\item[\rm (ii)]
Each of $\{{a^*}^i\}_{i=0}^d$, $\{e^*_i\}_{i=0}^d$ is a basis for $D^*$.
\end{itemize}
\end{lemma}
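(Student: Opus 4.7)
The plan is to prove (i) by showing $\{e_i\}_{i=0}^d$ and $\{a^i\}_{i=0}^d$ each span $D$, and then establishing linear independence; statement (ii) is then immediate by exchanging the roles of starred and unstarred generators.

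For spanning, I first observe from the idempotent relation $e_i e_j = \delta_{i,j} e_i$ together with $a = \sum_{i=0}^d \theta_i e_i$ that $a^k = \sum_{i=0}^d \theta_i^k e_i$ for all $k \geq 0$, and also $1 = \sum_{i=0}^d e_i$. Hence every power of $a$ lies in $\text{span}\{e_i\}_{i=0}^d$, so $D \subseteq \text{span}\{e_i\}_{i=0}^d$, while the reverse inclusion comes from showing each $e_i$ lies in $D$. For the latter I use Lagrange interpolation: the scalars $\{\th_i\}_{i=0}^d$ are mutually distinct (Conjecture \ref{conj:main}(i)), so the polynomial $p_i(\lambda)=\prod_{j\neq i}(\lambda-\th_j)/(\th_i-\th_j)$ is well-defined, and a brief computation using $a^k=\sum_\ell \th_\ell^k e_\ell$ shows $p_i(a) = \sum_\ell p_i(\th_\ell) e_\ell = e_i$. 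Thus $e_i$ lies in $\text{span}\{a^j\}_{j=0}^d$, and both candidate bases span $D$.

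The main obstacle is linear independence, because one must rule out the possibility that the extra generators $a^*$ and $\{e^*_i\}_{i=0}^d$ and the tridiagonal relations \eqref{eq:esiakesj}, \eqref{eq:eiaskej} could force hidden collapses among the $e_i$'s. To handle this I propose to produce, for each pair $(i_0,j_0)\in\{0,\ldots,d\}^2$, a one-dimensional $\K$-algebra homomorphism $\pi_{i_0,j_0}:T\to\K$ determined by
\[
\pi_{i_0,j_0}(e_i)=\delta_{i,i_0},\quad \pi_{i_0,j_0}(e^*_i)=\delta_{i,j_0},\quad \pi_{i_0,j_0}(a)=\th_{i_0},\quad \pi_{i_0,j_0}(a^*)=\th^*_{j_0}.
\]
Checking well-definedness is routine for \eqref{eq:eiej}, \eqref{eq:sumei}, \eqref{eq:sumthiei}. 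The key point is that \eqref{eq:esiakesj} and \eqref{eq:eiaskej} are vacuous in any one-dimensional representation: $\pi_{i_0,j_0}(e^*_i a^k e^*_j)=\delta_{i,j_0}\th_{i_0}^k\delta_{j,j_0}$ can be nonzero only when $i=j=j_0$, in which case $|i-j|=0\leq k$, so the hypothesis $k<|i-j|$ is never met, and similarly for \eqref{eq:eiaskej}.

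Given these representations, independence follows at once. If $\sum_{i=0}^d c_i e_i=0$ in $T$, then applying $\pi_{i_0,0}$ for each $i_0$ yields $c_{i_0}=0$, so $\{e_i\}_{i=0}^d$ is a basis for $D$. If $\sum_{i=0}^d c_i a^i=0$, then applying $\pi_{i_0,0}$ for $i_0=0,\ldots,d$ yields the Vandermonde system $\sum_{i=0}^d c_i \th_{i_0}^i=0$, which has only the zero solution because the $\th_i$ are distinct; so $\{a^i\}_{i=0}^d$ is a basis for $D$ as well. Part (ii) is proved by repeating the entire argument with $(e_i,a,\th_i)$ and $(e^*_i,a^*,\th^*_i)$ interchanged, using the representations $\pi_{0,j_0}$ to separate the $e^*_i$'s and the Vandermonde argument on $\{\th^*_i\}$ for the powers of $a^*$.
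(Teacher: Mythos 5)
Your proposal is correct and follows essentially the same route as the paper: both rest on constructing the one-dimensional $\K$-algebra homomorphisms $T\to\K$ sending $e_i\mapsto\delta_{i,r}$, $e^*_i\mapsto\delta_{i,s}$ (noting that the relations \eqref{eq:esiakesj}, \eqref{eq:eiaskej} are automatically respected), and on the distinctness of the $\th_i$ for the interpolation/Vandermonde step. The only cosmetic difference is that the paper uses the homomorphisms merely to show $e_i\neq 0$ and then derives independence from $e_ie_j=\delta_{i,j}e_i$, whereas you apply them directly to extract coefficients.
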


\begin{proof}
(i): Observe that for  $0 \leq r,s\leq d$ there exists an $\K$-algebra 
homomorphism $T \to \K$ that sends $e_i \mapsto \delta_{i,r}$ and  
$e^*_i \mapsto \delta_{i,s}$ for $0 \leq i \leq d$.
This is verified by checking that the defining relations for $T$ are respected.
By the observation,  $e_i\not=0$ for $0 \leq i\leq d$.
By this and the equation on the left in (\ref{eq:eiej}),
the elements $\{e_i\}_{i=0}^d$ are linearly independent.
Let $D'$ denote the $\K$-subspace of $T$ spanned by $\{e_i\}_{i=0}^d$.
By the equations on the left in \eqref{eq:eiej}, \eqref{eq:sumei}
we find $D'$ is an $\K$-subalgebra of $T$.
By the equation on the left in \eqref{eq:sumthiei}  and since 
$\{\th_i\}_{i=0}^d$ are mutually distinct, $a$ generates $D'$ so $D=D'$. 
The result follows.

(ii): Similar to the proof of (i) above.
\end{proof}
 
\begin{lemma}    \samepage
With reference to Definition {\rm \ref{def:T}}, 
\begin{equation}         \label{eq:aei}
 ae_i=\theta_ie_i, \qquad\qquad
 a^*e^*_i=\theta^*_ie^*_i    \qquad\qquad (0 \leq i \leq d),
\end{equation}
\begin{equation}         \label{eq:eiesi}
 e_i=\prod_{\stackrel{0\leq j\leq d}{j\neq i}}
      \frac{a-\th_j1}{\th_i-\th_j},             \qquad\qquad
 e^*_i=\prod_{\stackrel{0\leq j\leq d}{j\neq i}}
      \frac{a^*-\th^*_j1}{\th^*_i-\th^*_j}
             \qquad\qquad (0 \leq i \leq d),
\end{equation}
\begin{equation}         \label{eq:prodathi}
 \prod_{i=0}^d (a-\th_i1)=0,    \qquad\qquad
  \prod_{i=0}^d (a^*-\th^*_i1)=0.
\end{equation}
\end{lemma}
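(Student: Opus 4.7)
The plan is to verify the three displays in sequence using only the defining relations \eqref{eq:eiej}--\eqref{eq:sumthiei} and the fact that the scalars $\{\th_i\}_{i=0}^d$ are mutually distinct (condition (i) of Conjecture \ref{conj:main}, which is in force throughout this section). The central observation is that $\{e_i\}_{i=0}^d$ is a complete system of orthogonal idempotents whose $\K$-linear combination with coefficients $\th_i$ equals $a$, so polynomial calculus in $a$ reduces to scalar calculus at the $\th_i$; the same remark applies to $a^*$ and $\{e^*_i\}_{i=0}^d$.

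First I would establish the key identity $a^n = \sum_{i=0}^d \th_i^n e_i$ for every integer $n \geq 0$ by induction on $n$. The base case $n=0$ is the equation on the left in \eqref{eq:sumei}. For the inductive step, combine \eqref{eq:sumthiei} with the orthogonality relation on the left in \eqref{eq:eiej} to compute $a\cdot a^n = \sum_{j,k} \th_j \th_k^n e_j e_k = \sum_k \th_k^{n+1} e_k$. By linearity this gives $f(a) = \sum_{k=0}^d f(\th_k) e_k$ for every $f \in \K[\lambda]$. Equation \eqref{eq:aei} then follows at once: multiply $a = \sum_j \th_j e_j$ on the right by $e_i$ and apply \eqref{eq:eiej}.

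For \eqref{eq:eiesi}, let $p_i(\lambda) = \prod_{j \neq i} (\lambda - \th_j)/(\th_i - \th_j) \in \K[\lambda]$, which is well-defined since the $\th_j$ are distinct and satisfies $p_i(\th_k) = \delta_{i,k}$ by Lagrange interpolation. Applying the formula $f(a) = \sum_k f(\th_k) e_k$ with $f = p_i$ yields $p_i(a) = \sum_{k=0}^d \delta_{i,k} e_k = e_i$, which is precisely \eqref{eq:eiesi}. For \eqref{eq:prodathi}, apply the same formula with $f(\lambda) = \prod_{i=0}^d(\lambda - \th_i)$; since $f(\th_k) = 0$ for every $k$, we obtain $f(a) = 0$. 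The assertions about $a^*$ and $\{e^*_i\}_{i=0}^d$ are handled by the obvious symmetric argument using the right-hand versions of \eqref{eq:eiej}--\eqref{eq:sumthiei}.

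There is no real obstacle here; the three displays are the spectral-resolution identities for a formally diagonalizable element of an abstract algebra, and each is forced once the identity $a^n = \sum_i \th_i^n e_i$ is in hand. If anything requires care it is only to ensure that no step tacitly invokes the basis statement of Lemma \ref{lem:injection}; the argument above uses only the defining relations, so the present lemma may legitimately be cited in later proofs alongside Lemma \ref{lem:injection} without circularity.
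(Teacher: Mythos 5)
Your proof is correct and follows the same route the paper intends: the paper's proof is simply the one-line remark that the identities are ``routinely verified using \eqref{eq:eiej}--\eqref{eq:sumthiei}'', and your argument is exactly that routine verification carried out in full (via $a^n=\sum_i\th_i^ne_i$, Lagrange interpolation, and the distinctness of the $\th_i$ guaranteed by the standing hypothesis). Your closing remark that the argument avoids Lemma \ref{lem:injection} and hence any circularity is a correct and worthwhile observation.
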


\begin{proof}
Routinely verified using \eqref{eq:eiej}--\eqref{eq:sumthiei}.
\end{proof}

\begin{lemma}   \label{lem:tbasis}   \samepage
With reference to Definitions {\rm \ref{def:tau}} and {\rm \ref{def:T}} 
the following {\rm (i), (ii)} hold.
\begin{itemize}
\item[\rm (i)] 
The sequence $\{\tau_i(a)\}_{i=0}^d $ is a basis for $D$.
\item[\rm (ii)] 
The sequence $\{\tau^*_i(a^*)\}_{i=0}^d $ is a basis for $D^*$.
\end{itemize}
\end{lemma}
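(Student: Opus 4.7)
The plan is to leverage Lemma \ref{lem:injection}, which already exhibits $\{a^i\}_{i=0}^d$ as a basis for $D$ and $\{(a^*)^i\}_{i=0}^d$ as a basis for $D^*$. In particular $\dim D = d+1 = \dim D^*$, so it suffices to show that $\{\tau_i(a)\}_{i=0}^d$ is linearly independent in $D$, and likewise for the dual sequence in $D^*$.

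For part (i), I would first observe directly from Definition \ref{def:tau} that $\tau_i$ is a monic polynomial of degree $i$ in the indeterminate $\lambda$, so $\tau_i(a)$ is an element of $D$ that can be written as $a^i + \text{(lower-order terms in $a$)}$. In other words, expanding each $\tau_i(a)$ in the basis $\{a^j\}_{j=0}^d$ yields a $(d+1)\times (d+1)$ transition matrix that is upper triangular with all diagonal entries equal to $1$. This matrix is therefore invertible, which forces $\{\tau_i(a)\}_{i=0}^d$ to be a basis for $D$. Part (ii) then follows by the identical argument applied to $a^*$, $\tau^*_i$, and $D^*$, using Lemma \ref{lem:injection}(ii).

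The only thing that needs a moment of care is justifying that $\tau_i(a) \in D$ (which is immediate since $D$ is generated by $a$, and $\tau_i(a)$ is a polynomial expression in $a$ with coefficients in $\K$) and that the notion of ``degree'' really makes sense in $D$ in the sense that a relation $\sum_i c_i \tau_i(a) = 0$ forces $c_i = 0$. This last point is precisely the linear independence of $\{a^i\}_{i=0}^d$ from Lemma \ref{lem:injection}. There is no genuine obstacle here; the proof is a short triangular change-of-basis argument, and I would present it in two or three lines invoking Lemma \ref{lem:injection}.
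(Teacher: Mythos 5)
Your proof is correct and follows exactly the paper's argument: the paper also deduces the result from the fact that $\{a^i\}_{i=0}^d$ is a basis for $D$ (Lemma \ref{lem:injection}) together with $\tau_i$ being monic of degree $i$, i.e.\ a triangular change of basis. You have simply spelled out the triangularity in more detail than the paper does.
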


\begin{proof}
(i): The sequence $\{a^i\}_{i=0}^d$ is a basis for $D$, and the polynomial 
$\tau_i$ has degree exactly $i$ for $0 \leq i\leq d$. The result follows.

(ii): Similar to the proof of (i) above.
\end{proof}

\begin{note}  \label{note:explain}
When we introduced $T$ in \cite{nomstructure} we assumed that there exists 
a tridiagonal system with eigenvalue sequence $\{\th_i\}_{i=0}^d$ and
dual eigenvalue sequence $\{\th^*_i\}_{i=0}^d$.
The assumption was natural in the context of \cite{nomstructure} but it was 
not used in any substantial way.
Indeed one can check that every proof in \cite[Sections 4, 5]{nomstructure} 
is valid verbatim under our present assumption that $\{\th_i\}_{i=0}^d$ and
$\{\th^*_i\}_{i=0}^d$ satisfy conditions (i), (iii) of 
Conjecture \ref{conj:main}.
With this understanding, later in the paper we will invoke some results 
from \cite[Sections 4, 5]{nomstructure}.
\end{note}

\section{Finite-dimensional $T$-modules}

\indent 
In this section we collect some useful facts about finite-dimensional 
$T$-modules.
For the most part the proofs are routine and omitted.

\medskip

\begin{lemma}      \label{lem:basic1}    \samepage
Let $V$ denote a finite-dimensional  $T$-module. 
Then {\rm (i)}--{\rm (v)} hold below.
\begin{itemize}
\item[\rm (i)] 
$V$ is a direct sum of the nonzero spaces among $e_iV$ $(0 \leq i \leq d)$. 
\item[\rm (ii)]
For all $i$ $(0 \leq i \leq d)$ such that $e_iV\not=0$, the space $e_iV$ 
is an eigenspace for $a$ with eigenvalue $\th_i$, and $e_i$ acts on $V$ as 
the projection onto $e_iV$.
\item[\rm (iii)] 
$V$ is a direct sum of the nonzero spaces among $e^*_iV$ $(0 \leq i \leq d)$. 
\item[\rm (iv)]
For all $i$ $(0 \leq i \leq d)$ such that $e^*_iV\not=0$,
the space $e^*_iV$ is an eigenspace for $a^*$ with eigenvalue $\th^*_i$, 
and $e^*_i$ acts on $V$ as the projection onto $e^*_iV$.
\item[\rm (v)] 
Each of $a$, $a^*$ is diagonalizable on $V$.
\end{itemize}
\end{lemma}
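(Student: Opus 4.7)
The proof is essentially a direct verification from the defining relations of $T$ in Definition \ref{def:T}, so there is no serious obstacle; the plan is simply to be careful about which relation is being invoked at each step.

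First I would establish (i) and (iii) simultaneously. For any $v \in V$, the relation $\sum_{i=0}^d e_i = 1$ gives $v = \sum_{i=0}^d e_i v$, so $V = \sum_{i=0}^d e_i V$. To see that the sum is direct, suppose $\sum_{i=0}^d v_i = 0$ with $v_i \in e_i V$, say $v_i = e_i w_i$. Applying $e_j$ and using $e_j e_i = \delta_{i,j} e_i$ from \eqref{eq:eiej} yields $0 = e_j \sum_i e_i w_i = e_j w_j = v_j$ for each $j$. The same argument with $e^*_i$ handles (iii).

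Next, for (ii), pick $v \in e_i V$ and write $v = e_i w$. Using \eqref{eq:sumthiei} and \eqref{eq:eiej} we compute
\[
a v = \Bigl(\sum_{j=0}^d \theta_j e_j\Bigr) e_i w = \sum_{j=0}^d \theta_j e_j e_i w = \theta_i e_i w = \theta_i v,
\]
so $e_i V \subseteq \{x \in V : a x = \theta_i x\}$, and since $e_i V \neq 0$ by hypothesis this is an eigenspace with eigenvalue $\theta_i$. That $e_i$ acts as projection onto $e_i V$ along the decomposition in (i) is immediate: writing $v = \sum_j e_j w$ for any $v \in V$, the relation $e_i e_j = \delta_{i,j} e_i$ gives $e_i v = e_i e_i w = e_i w$, which is the $e_i V$-component. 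Statement (iv) is proved in exactly the same way, substituting the dual relations from \eqref{eq:eiej}, \eqref{eq:sumei}, \eqref{eq:sumthiei}.

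Finally, (v) is immediate from (i)--(iv): by (i) and (ii), $V$ is a direct sum of eigenspaces of $a$, so $a$ is diagonalizable on $V$; by (iii) and (iv) the same holds for $a^*$. Note that at no point do we invoke the tridiagonal relations \eqref{eq:esiakesj} or \eqref{eq:eiaskej}; only the idempotent, completeness, and spectral relations \eqref{eq:eiej}--\eqref{eq:sumthiei} are needed, and the fact that $\{\theta_i\}_{i=0}^d$ and $\{\theta^*_i\}_{i=0}^d$ are mutually distinct (condition (i) of Conjecture \ref{conj:main}) is used only implicitly via Lemma \ref{lem:injection} to identify distinct eigenvalues.
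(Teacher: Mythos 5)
Your proof is correct and is essentially the routine verification the paper has in mind (the paper explicitly omits the proofs in this section as routine), relying only on the relations \eqref{eq:eiej}--\eqref{eq:sumthiei}. The one point worth tightening is in (ii)/(iv): you show $e_iV$ is contained in $\{x : ax=\theta_i x\}$, but the paper's definition of eigenspace requires equality, so you should add the reverse inclusion, which follows from the direct sum in (i) together with the mutual distinctness of the $\theta_j$ (write $x=\sum_j e_jx$ and compare components of $ax=\theta_i x$).
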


\begin{lemma}     \label{lem:basic2}    \samepage
Let $V$ denote a finite-dimensional  $T$-module. 
Then {\rm (i)}, {\rm (ii)} hold below.
\begin{itemize}
\item[\rm (i)] 
For $0 \leq i \leq d$,
\[
a^* e_iV \subseteq e_{i-1}V + e_iV + e_{i+1}V,
\]
where $e_{-1}=0$ and $e_{d+1}=0$.
\item[\rm (ii)]
For $0 \leq i \leq d$,
\[
a e^*_iV \subseteq e^*_{i-1}V + e^*_iV + e^*_{i+1}V,
\]
where $e^*_{-1}=0$ and $e^*_{d+1}=0$.
\end{itemize}
\end{lemma}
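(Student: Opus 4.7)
The plan is to reduce both parts directly to the defining relations \eqref{eq:esiakesj} and \eqref{eq:eiaskej} with $k=1$, using Lemma \ref{lem:basic1} to control how the idempotents act on $V$.

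For part (i), I would fix $i$ with $0 \leq i \leq d$ and take an arbitrary $v \in e_iV$. By Lemma \ref{lem:basic1}(ii), $e_i$ acts on $V$ as the projection onto $e_iV$, so $e_iv = v$; hence $a^*v = a^*e_iv$. Next I would insert the resolution of unity on the left: using $\sum_{j=0}^d e_j = 1$ from \eqref{eq:sumei}, write
\[
a^*v \;=\; \sum_{j=0}^d e_j a^* e_i v.
\]
Now apply relation \eqref{eq:eiaskej} with $k=1$: $e_j a^* e_i = 0$ whenever $1<|j-i|$, i.e.\ whenever $|j-i|\ge 2$. Only the terms with $j\in\{i-1,i,i+1\}$ can survive, and each surviving term $e_j a^* e_i v$ lies in $e_jV$ by Lemma \ref{lem:basic1}(i),(ii). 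Therefore $a^*v \in e_{i-1}V + e_iV + e_{i+1}V$, with the convention that $e_{-1}V=0$ and $e_{d+1}V=0$ since $e_{-1}$ and $e_{d+1}$ are not defined (equivalently, the corresponding index ranges drop out of the sum). Since $v\in e_iV$ was arbitrary, this proves (i).

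Part (ii) is entirely analogous with the roles of the starred and unstarred generators interchanged: for $v\in e^*_iV$, write $av = \sum_{j=0}^d e^*_j a\, e^*_i v$ using the starred half of \eqref{eq:sumei} and Lemma \ref{lem:basic1}(iv), then invoke relation \eqref{eq:esiakesj} with $k=1$ to kill all terms with $|j-i|\ge 2$.

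There is no real obstacle here; the lemma is essentially a direct translation of the abstract tridiagonal relations in $T$ into a statement about the action on a module. The only mild point requiring care is making sure one uses the correct side of the defining relations—namely, \eqref{eq:eiaskej} (unstarred idempotents sandwiching $a^*$) for (i) and \eqref{eq:esiakesj} (starred idempotents sandwiching $a$) for (ii)—and remembering to set the out-of-range idempotents to zero in the conclusion so that the endpoint cases $i=0$ and $i=d$ are handled uniformly.
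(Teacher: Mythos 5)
Your proof is correct, and it is exactly the routine argument the paper has in mind (the paper omits the proof of this lemma as routine): insert $\sum_j e_j=1$ (resp.\ $\sum_j e^*_j=1$), kill the terms with $|j-i|\ge 2$ via \eqref{eq:eiaskej} (resp.\ \eqref{eq:esiakesj}) with $k=1$, and note each surviving term lands in $e_jV$ (resp.\ $e^*_jV$). No issues.
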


\medskip

We now consider finite-dimensional irreducible $T$-modules.

\medskip

\begin{lemma}    \label{lem:basic3}   \samepage
Let $V$ denote a finite-dimensional irreducible $T$-module. Then
{\rm (i)}, {\rm (ii)} hold below.
\begin{itemize}
\item[\rm (i)] 
There exist nonnegative integers $r$, $\delta$ $(r+ \delta \leq d)$ such that
\[
 e^*_iV \not=0 \quad \mbox{\rm if and only if} \quad r \leq i \leq r+ \delta
  \qquad \qquad (0 \leq i \leq d).
\]
\item[\rm (ii)] 
There exist nonnegative integers $t$, $\delta^*$ $(t+ \delta^* \leq d)$
such that
\[
e_iV \not=0 \quad \mbox{\rm if and only if} \quad t \leq i \leq t +\delta^*
\qquad \qquad (0 \leq i \leq d).
\]
\end{itemize}
\end{lemma}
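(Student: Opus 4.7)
The plan is to prove (i) by showing that the set $S=\{i : 0\le i\le d,\ e^*_iV\neq 0\}$ is a contiguous interval of integers, and then declaring $r=\min S$ and $r+\delta = \max S$. Part (ii) will follow from the same argument with the roles of $a,a^*$ and $e_i,e^*_i$ interchanged.

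The core idea is to exploit irreducibility of $V$ by constructing a $T$-submodule out of a ``truncated'' direct sum of $a^*$-eigenspaces. Suppose for contradiction that $S$ fails to be an interval, so we can pick indices $i<j<k$ with $i,k\in S$ but $j\notin S$, i.e., $e^*_jV=0$. Set
\[
W \;=\; \sum_{h=0}^{j-1} e^*_h V.
\]
Then $W$ is clearly invariant under every $e^*_h$ and, by Lemma \ref{lem:basic1}(iv), invariant under $a^*$ (since each $e^*_hV$ is an $a^*$-eigenspace). The key step is $a$-invariance. Apply Lemma \ref{lem:basic2}(ii) to each summand: for $0\le h\le j-2$ we have $ae^*_hV\subseteq e^*_{h-1}V+e^*_hV+e^*_{h+1}V\subseteq W$, and for $h=j-1$ we get $ae^*_{j-1}V\subseteq e^*_{j-2}V+e^*_{j-1}V+e^*_jV$, where the last term vanishes by our assumption $e^*_jV=0$. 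Thus $aW\subseteq W$. Since by \eqref{eq:eiesi} each $e_i$ is a polynomial in $a$ and each $e^*_i$ a polynomial in $a^*$, the algebra $T$ is generated by $a$ and $a^*$, so $W$ is a $T$-submodule of $V$.

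Now $W\neq 0$ because $i<j$ and $e^*_iV\neq 0$, so $e^*_iV\subseteq W$. On the other hand $W\neq V$: by Lemma \ref{lem:basic1}(iii) the sum $V=\bigoplus_{h\in S} e^*_hV$ is direct, and since $k>j-1$ with $e^*_kV\neq 0$, the component $e^*_kV$ cannot lie in $W$. This contradicts the irreducibility of $V$, so $S$ is an interval, proving (i). For (ii) one repeats the argument verbatim, replacing $e^*_h$ by $e_h$, $a^*$ by $a$, and invoking Lemma \ref{lem:basic1}(ii) and Lemma \ref{lem:basic2}(i) in place of their starred counterparts.

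I expect essentially no substantive obstacle: the only point requiring care is verifying that the truncated sum $W$ is $a$-invariant, which is precisely where the hypothesis $e^*_jV=0$ is used to discard the potentially problematic $e^*_{j+1}V$ term produced by $ae^*_{j-1}V$. Everything else---existence of $r$ and $r+\delta$ as well as the bound $r+\delta\le d$---is a direct consequence of the index range $0\le i\le d$ once the interval property is established, and nonemptiness of $S$ follows from $V\neq 0$ together with Lemma \ref{lem:basic1}(iii).
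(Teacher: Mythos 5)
Your proof is correct and follows essentially the same route as the paper: the paper also derives a contradiction with irreducibility by observing that if $e^*_hV=0$ for some $h$ strictly between the minimal and maximal indices of $S$, then the truncated sum $\sum_{i=r}^{h-1}e^*_iV$ is a nonzero proper $T$-submodule (your $W$ differs only by including the vanishing terms $e^*_0V,\dots,e^*_{r-1}V$). The paper leaves the $a$-invariance check implicit; you have simply written it out.
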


\begin{proof}
(i):
By Lemma \ref{lem:basic1}(iii) and since $V\not=0$, there exists an integer $i$
$(0 \leq i \leq d)$ such that $e^*_iV\not=0$.
Define $r=\min\{i \,|\, 0 \leq i \leq d, \; e^*_iV \not=0\}$ and 
$\rho=\max\{i \,|\,0 \leq i \leq d, \; e^*_iV \not=0\}$.
For $r+1 \leq h \leq \rho-1$ we have $e^*_hV \not=0$; otherwise 
$\sum_{i=r}^{h-1} e^*_iV$ is a nonzero $T$-module properly contained in $V$, 
a contradiction to the irreducibility of $V$. The result follows.

(ii): Similar to the proof of (i) above.
\end{proof}

\begin{proposition}  \label{prop:tds}   \samepage
Let $V$ denote a finite-dimensional irreducible $T$-module
and let $\delta$, $\delta^*$, $r$, $t$ denote the corresponding parameters
from Lemma \ref{lem:basic3}. Then $\delta=\delta^*$.
Moreover the sequence
 $(a;\{e_i\}_{i=t}^{t+\delta}; a^*; \{e^*_i\}_{i=r}^{r+\delta})$ acts on $V$ as
a tridiagonal system. 
\end{proposition}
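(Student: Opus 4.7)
The strategy is to verify directly that $(a;\{e_i\}_{i=t}^{t+\delta^*};a^*;\{e^*_i\}_{i=r}^{r+\delta})$ satisfies the four conditions of Definition \ref{def:tdp} when these elements act on $V$, and then deduce $\delta=\delta^*$ from \cite[Lemma 4.5]{TD00}.

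First I would check diagonalizability of $a$ and $a^*$ on $V$; this is exactly Lemma \ref{lem:basic1}(v). Next I would identify the eigenspaces of $a$ on $V$: combining Lemma \ref{lem:basic1}(i),(ii) with Lemma \ref{lem:basic3}(ii), these are precisely the spaces $e_iV$ for $t \leq i \leq t+\delta^*$, with $e_iV$ affording the eigenvalue $\th_i$. Reindexing by $V_j := e_{t+j}V$ for $0 \leq j \leq \delta^*$, the containment $a^*V_j \subseteq V_{j-1}+V_j+V_{j+1}$ (with $V_{-1}=V_{\delta^*+1}=0$) follows from Lemma \ref{lem:basic2}(i), together with $e_{t-1}V=0$ and $e_{t+\delta^*+1}V=0$ at the endpoints. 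A symmetric argument using Lemma \ref{lem:basic3}(i) and Lemma \ref{lem:basic2}(ii) handles the ordering of the eigenspaces of $a^*$.

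For condition (iv) of Definition \ref{def:tdp}, I would observe from \eqref{eq:eiesi} that each $e_i$ is a polynomial in $a$ and each $e^*_i$ is a polynomial in $a^*$; since the generating set of $T$ consists of $a$, $a^*$, and these idempotents, $T$ is in fact generated as a $\K$-algebra by $a$ and $a^*$ alone. Hence any subspace $W \subseteq V$ satisfying $aW \subseteq W$ and $a^*W \subseteq W$ is automatically a $T$-submodule, so by the irreducibility hypothesis $W=0$ or $W=V$.

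Assembling these items, $(a,a^*)$ acts on $V$ as a tridiagonal pair whose diameter in the sense of condition (ii) of Definition \ref{def:tdp} is $\delta^*$ and whose diameter in the sense of condition (iii) is $\delta$. Invoking \cite[Lemma 4.5]{TD00}, these two diameters coincide, giving $\delta=\delta^*$. Since the orderings were chosen so as to satisfy \eqref{eq:t1} and \eqref{eq:t2}, they are standard in the sense of Definition \ref{def:TDsystem}, and the claimed tridiagonal system structure follows. I do not expect a serious obstacle: the argument is essentially a compilation of the preceding lemmas, with the only finicky points being the reindexing to match the $0$-based convention of Definition \ref{def:tdp} and the observation that $T$ is generated as a $\K$-algebra by $a$ and $a^*$.
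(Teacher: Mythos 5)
Your proposal is correct and follows essentially the same route as the paper, whose proof simply declares the result immediate from Lemmas \ref{lem:basic1}--\ref{lem:basic3} together with the fact (quoted from \cite[Lemma 4.5]{TD00}) that the two diameters of a tridiagonal pair coincide. You have merely spelled out the details the paper leaves implicit, including the correct observation that $T$ is generated by $a,a^*$ so that irreducibility of the $T$-module yields condition (iv) of Definition \ref{def:tdp}.
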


\begin{proof}
Immediate from Lemmas \ref{lem:basic1}--\ref{lem:basic3} and the third 
sentence below Note \ref{note:star}.
\end{proof}

\begin{proposition}      \label{prop:goback}   \samepage
Fix integers $\delta,r,t$ such that
$0 \leq \delta\leq d$ and $0 \leq r,t\leq d-\delta$. Let
$(A; \{E_i\}_{i=0}^\delta; A^*; \{E^*_i\}_{i=0}^\delta)$ denote a tridiagonal 
system over $\K$ that has eigenvalue sequence $\{\th_i\}_{i=t}^{t+\delta}$
and dual eigenvalue sequence $\{\th^*_i\}_{i=r}^{r+\delta}$.
Let $V$ denote the underlying vector space. 
Then there exists a $T$-module structure on $V$ such that 
{\rm (i)}--{\rm (iii)} hold below.
\begin{itemize}
\item[\rm (i)]
$a$ (resp. $a^*$) acts on $V$ as $A$ (resp. $A^*$).
\item[\rm (ii)]
For $0 \leq i \leq d$, $e_i$ acts on $V$ as $E_{i-t}$ if 
$t \leq i \leq t+\delta$, and zero otherwise.
\item[\rm (iii)]
For $0 \leq i \leq d$, $e^*_i$ acts on $V$ as
$E^*_{i-r}$ if $r \leq i \leq r+\delta$, and zero otherwise.
\end{itemize}
This $T$-module is irreducible.
\end{proposition}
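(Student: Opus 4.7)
The plan is to define the $T$-module structure on $V$ by prescribing the actions of the generators as in (i)--(iii): let $a$ act as $A$, let $a^*$ act as $A^*$, let $e_i$ act as $E_{i-t}$ for $t\le i\le t+\delta$ and as zero otherwise, and let $e^*_i$ act as $E^*_{i-r}$ for $r\le i\le r+\delta$ and as zero otherwise. It then suffices to verify that these actions respect all defining relations \eqref{eq:eiej}--\eqref{eq:eiaskej} of $T$; the prescribed actions of $a,a^*,e_i,e^*_i$ then combine to give a well-defined algebra homomorphism $T\to\text{End}(V)$.

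The verification is essentially routine. The orthogonality relations \eqref{eq:eiej} follow from $E_pE_q=\delta_{p,q}E_p$ and the analogous identity for $\{E^*_p\}$, combined with the convention that out-of-range idempotents act as zero. The completeness relations \eqref{eq:sumei} reduce to $\sum_{j=0}^\delta E_j=I$ and $\sum_{j=0}^\delta E^*_j=I$. The spectral decompositions \eqref{eq:sumthiei} reduce to $A=\sum_{j=0}^\delta \th_{j+t}E_j$ and $A^*=\sum_{j=0}^\delta \th^*_{j+r}E^*_j$, which are standard properties of primitive idempotents in view of the given eigenvalue and dual eigenvalue sequences. The vanishing relations \eqref{eq:esiakesj} and \eqref{eq:eiaskej} split into two cases: when both indices lie in the appropriate range, they follow from Lemma \ref{lem:trid} applied to $(A;\{E_i\}_{i=0}^\delta;A^*;\{E^*_i\}_{i=0}^\delta)$, using that $|i-j|$ is preserved under a common shift; when at least one index is out of range, the corresponding idempotent acts as zero and the relation holds trivially.

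Once the $T$-module structure on $V$ is established, irreducibility is immediate: any $T$-submodule $W\subseteq V$ is in particular invariant under $a$ and $a^*$, hence under $A$ and $A^*$, so condition (iv) of Definition \ref{def:tdp} forces $W=0$ or $W=V$. The only (modest) obstacle in the whole argument is bookkeeping with the two independent shifts $r$ and $t$; but because $|i-j|$ is invariant under a common shift, the vanishing relations \eqref{eq:esiakesj} and \eqref{eq:eiaskej} translate directly between the $T$-side indexing and the indexing used in Lemma \ref{lem:trid}, so no substantive difficulty arises.
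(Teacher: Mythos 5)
Your proof is correct. The paper omits the proof of this proposition altogether (Section 5 opens by declaring that ``for the most part the proofs are routine and omitted''), and your verification---checking the defining relations \eqref{eq:eiej}--\eqref{eq:eiaskej} on the prescribed generator images, using the invariance of $|i-j|$ under a common shift together with Lemma \ref{lem:trid} for \eqref{eq:esiakesj}, \eqref{eq:eiaskej}, and deducing irreducibility from Definition \ref{def:tdp}(iv)---is exactly the routine argument the authors intend.
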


\section{The $\mu$-conjecture}

\indent 
Observe that $e^*_0Te^*_0$ is an $\K$-algebra with multiplicative identity 
$e^*_0$. 
This section contains a general description of $e^*_0Te^*_0$ followed by 
a conjecture about the precise nature of $e^*_0Te^*_0$. 
We start by recalling \cite[Theorem~2.6]{nomstructure} with the wording 
slightly changed.

\medskip

\begin{lemma} {\rm \cite[Theorem~2.6]{nomstructure}} \label{lem:17}   \samepage
The algebra $e^*_0Te^*_0$ is commutative and generated by
\[
  e^*_0\tau_i(a)e^*_0 \qquad \qquad (1\leq i \leq d).
\]
\end{lemma}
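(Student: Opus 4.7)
The plan is to establish the lemma in two parts: first generation (that $e^*_0Te^*_0$ equals the subalgebra $\mathcal{A}$ of $T$ generated by $\{e^*_0\tau_i(a)e^*_0\}_{i=1}^d$), and then commutativity.

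For generation I start from the observation that $T$ is generated as an $\K$-algebra by $a$ and $a^*$, via \eqref{eq:eiesi}, so $T$ is spanned by alternating words in $a$ and $a^*$. Since $a^*e^*_0=\th^*_0e^*_0=e^*_0a^*$, the outer $a^*$-powers inside any element $e^*_0(\cdot)e^*_0$ collapse to scalars, and expanding each interior $a^{*n}$ as $\sum_j(\th^*_j)^n e^*_j$ via \eqref{eq:sumthiei} reduces the task to showing that every product
\[
 P \;=\; e^*_0\,a^{m_1}\,e^*_{j_1}\,a^{m_2}\,e^*_{j_2}\cdots e^*_{j_\ell}\,a^{m_{\ell+1}}\,e^*_0
\]
lies in $\mathcal{A}$. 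I would argue by nested induction on $\ell$ and on $\max_i j_i$. The base case $\ell=0$ gives $e^*_0a^me^*_0$, which is an $\K$-linear combination of the $e^*_0\tau_i(a)e^*_0$ because $\{\tau_i(a)\}_{i=0}^d$ is a basis of $D$ by Lemma \ref{lem:tbasis}. For the inductive step, the identity $e^*_{j_1}=1-\sum_{r\neq j_1}e^*_r$ expresses $P$ in terms of (i) a product with $\ell-1$ intermediates (handled by the outer induction), and (ii) analogous products with $j_1$ replaced by other indices $r$; the vanishing relations \eqref{eq:esiakesj} restrict the nontrivial contributions to $r\leq\min(m_1,m_2)$, and the $r=0$ summand factors as $(e^*_0a^{m_1}e^*_0)\cdot(e^*_0a^{m_2}e^*_{j_2}\cdots e^*_0)$, both factors having fewer intermediates. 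The inner induction on $\max_i j_i$ then closes the argument.

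For commutativity I would check it on the generators. Writing $e^*_0=1-\sum_{k\geq 1}e^*_k$ and noting that \eqref{eq:esiakesj} forces $e^*_0\tau_i(a)e^*_k=0$ whenever $k>i$ (since $\tau_i(a)$ has degree $i$ in $a$), I obtain
\[
 e^*_0\tau_i(a)e^*_0\cdot e^*_0\tau_j(a)e^*_0 \;=\; e^*_0\tau_i(a)\tau_j(a)e^*_0 \;-\; \sum_{k=1}^{\min(i,j)} e^*_0\,\tau_i(a)\,e^*_k\,\tau_j(a)\,e^*_0.
\]
Since $\tau_i(a)\tau_j(a)=\tau_j(a)\tau_i(a)$ in $D$, the first term is symmetric in $(i,j)$, so commutativity reduces to the symmetric-sum identity
\[
 \sum_{k=1}^{\min(i,j)} e^*_0\,\tau_i(a)\,e^*_k\,\tau_j(a)\,e^*_0 \;=\; \sum_{k=1}^{\min(i,j)} e^*_0\,\tau_j(a)\,e^*_k\,\tau_i(a)\,e^*_0.
\]

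The main obstacle is this symmetric-sum identity, which is where the genuine tridiagonal structure enters: without the specific form of the relations in Definition \ref{def:T} it would fail. I expect condition (iii) of Conjecture \ref{conj:main}, the common recurrence satisfied by $\{\th_i\}_{i=0}^d$ and $\{\th^*_i\}_{i=0}^d$, to be decisive here, as it is what forces the cross-terms on the two sides to match. Concretely, I would adapt the arguments of \cite[Sections~4,5]{nomstructure} — valid under our present hypothesis by Note \ref{note:explain} — to express each side as a common third quantity built from the polynomials $\tau_i(a)$, $\tau^*_i(a^*)$ and the scalars $\{\th_i\},\{\th^*_i\}$. Once commutativity is in hand, the lemma is proved.
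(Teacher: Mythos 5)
There is a genuine gap, and it sits exactly where the substance of the lemma lies. Your reduction of commutativity to the identity $\sum_{k=1}^{\min(i,j)} e^*_0\tau_i(a)e^*_k\tau_j(a)e^*_0 = \sum_{k=1}^{\min(i,j)} e^*_0\tau_j(a)e^*_k\tau_i(a)e^*_0$ is correct (the bound $k\le\min(i,j)$ does follow from \eqref{eq:esiakesj} and $\deg\tau_i=i$), but this identity \emph{is} the commutativity assertion; the rewriting gains nothing. At that point you say you ``expect'' condition (iii) to be decisive and ``would adapt'' the arguments of \cite[Sections~4, 5]{nomstructure} — that is a statement of intent to import precisely the external result that carries the whole weight, not a proof. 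For comparison, the paper does not reprove this either: its proof of Lemma \ref{lem:17} is a one-line citation of \cite[Theorem~2.6]{nomstructure}, with Note \ref{note:explain} certifying that the proof there goes through verbatim under the present hypotheses on $\{\th_i\}_{i=0}^d$, $\{\th^*_i\}_{i=0}^d$, and Lemma \ref{lem:tbasis}(i) translating between the generating sets $\{e^*_0a^ie^*_0\}_{i=1}^d$ and $\{e^*_0\tau_i(a)e^*_0\}_{i=1}^d$. Read charitably, your proposal collapses to the same citation, but presented as an unexecuted plan rather than an invoked theorem.

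The generation argument also has a flaw: the induction is not well-founded. Writing $e^*_{j_1}=1-\sum_{r\neq j_1}e^*_r$ expresses $P$ as the merged word (with $\ell-1$ intermediates) minus $\sum_{r\neq j_1}P_r$; apart from the $r=0$ summand, which factors, the remaining $P_r$ have the same number of intermediates as $P$, and the constraints from \eqref{eq:esiakesj} are $r\le m_1$ and $|r-j_2|\le m_2$ (not $r\le\min(m_1,m_2)$), so $r$ can exceed $j_1$ and neither $\ell$ nor $\max_i j_i$ decreases. The mechanism that actually tames these words is Lemma \ref{cor:EsrDDsDDss} (quoted from \cite[Corollary~4.5]{nomstructure}) together with the weight-minimality argument of Lemma \ref{lem:convex}: taking $r=s=0$ there, the only convex sequences $(0,k_1,\ldots,k_n,0)$ with $k_i\ge 0$ are identically zero, so $e^*_0DD^*D\cdots De^*_0=e^*_0De^*_0D\cdots e^*_0De^*_0$, and generation by $\{e^*_0\tau_i(a)e^*_0\}_{i=1}^d$ follows from Lemma \ref{lem:tbasis}(i). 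Those tools are available without circularity (they depend only on the quoted corollary, not on Lemma \ref{lem:17}), and they — or the corresponding results in \cite{nomstructure} — are what a self-contained proof must use.
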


\begin{proof}
Follows from \cite[Theorem~2.6]{nomstructure} in view of Lemma 
\ref{lem:tbasis}(i) and Note \ref{note:explain}.
\end{proof}

\begin{definition}      \label{def:18}   \samepage
Let $\{x_i\}_{i=1}^d$ denote mutually commuting indeterminates.
Let $\K[x_1,\ldots,x_d]$ denote the $\K$-algebra consisting of the polynomials 
in $\{x_i\}_{i=1}^d$ that have all coefficients in $\K$. We abbreviate
$R =\K[x_1,\ldots,x_d\rbrack$.
\end{definition}

\begin{corollary}     \label{cor:18a}   \samepage
There exists a surjective $\K$-algebra homomorphism 
$\mu: R \to e^*_0Te^*_0$ that sends $x_i \mapsto e^*_0\tau_i(a)e^*_0$
for $1 \leq i \leq d$.
\end{corollary}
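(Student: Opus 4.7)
The proof is essentially a direct application of the universal property of the polynomial algebra, made possible by Lemma \ref{lem:17}.

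The plan is as follows. First I would observe that $R = \K[x_1,\ldots,x_d]$ is the free commutative $\K$-algebra on the set $\{x_i\}_{i=1}^d$. Consequently, for any commutative $\K$-algebra $S$ and any choice of $d$ elements $s_1,\ldots,s_d \in S$, there exists a unique $\K$-algebra homomorphism $R \to S$ sending $x_i \mapsto s_i$ for $1 \leq i \leq d$. (Under this homomorphism, the identity $1 \in R$ is sent to the multiplicative identity of $S$.)

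Next I would apply this universal property with $S = e^*_0 T e^*_0$ and $s_i = e^*_0 \tau_i(a) e^*_0$. Two preliminary checks are needed. First, $e^*_0 T e^*_0$ is a $\K$-algebra with multiplicative identity $e^*_0$, as already observed at the start of Section 6. Second, $e^*_0 T e^*_0$ is commutative; this is exactly the first assertion of Lemma \ref{lem:17}. With these in hand, the universal property produces the desired $\K$-algebra homomorphism $\mu: R \to e^*_0 T e^*_0$ with $\mu(x_i) = e^*_0 \tau_i(a) e^*_0$.

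Finally, for surjectivity I would invoke the second assertion of Lemma \ref{lem:17}, which says that $e^*_0 T e^*_0$ is generated as a $\K$-algebra by the elements $\{e^*_0 \tau_i(a) e^*_0\}_{i=1}^d$. Since these elements all lie in the image of $\mu$ and the image of a $\K$-algebra homomorphism is a $\K$-subalgebra, $\mu$ must be surjective. There is no genuine obstacle here; the entire statement is a formal consequence of Lemma \ref{lem:17} together with the universal property of $R$.
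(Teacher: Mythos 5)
Your proposal is correct and is exactly the argument the paper intends: the paper's proof is simply ``Immediate from Lemma \ref{lem:17},'' and your write-up spells out the universal property of $R$ together with the commutativity and generation statements of that lemma. No gap and no difference in approach.
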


\begin{proof}
Immediate from Lemma \ref{lem:17}.
\end{proof}

\begin{conjecture}    \label{conj:mainp}   \samepage
The map $\mu$ from Corollary {\rm \ref{cor:18a}} is an isomorphism.
\end{conjecture}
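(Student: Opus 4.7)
The plan is to establish that the surjective homomorphism $\mu : R \to e^*_0 T e^*_0$ from Corollary~\ref{cor:18a} is injective. Equivalently, the $d$ elements $\{e^*_0 \tau_i(a) e^*_0\}_{i=1}^d$ must be shown to be algebraically independent over $\K$. With surjectivity already in hand via Lemma~\ref{lem:17}, this algebraic independence is the sole content of the conjecture.

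The most natural attack is to exhibit a sufficiently rich family of $T$-modules on which these generators act with algebraically independent scalars. Proposition~\ref{prop:goback} lets us turn any tridiagonal system with eigenvalue sequence $\{\th_i\}_{i=0}^d$ and dual eigenvalue sequence $\{\th^*_i\}_{i=0}^d$ into a finite-dimensional irreducible $T$-module $V$. If the tridiagonal system is sharp, then $e^*_0 V$ is one-dimensional, and by Definition~\ref{def:split} the operator $e^*_0 \tau_i(a) e^*_0$ acts on $e^*_0 V$ as multiplication by $\zeta_i / \prod_{j=1}^i (\th^*_0 - \th^*_j)$, where $\{\zeta_i\}_{i=0}^d$ is the split sequence. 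Therefore any $p \in \Ker \mu$ must vanish at every split sequence arising from a sharp tridiagonal system over $\K$.

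Accordingly, one would attempt to produce sharp tridiagonal systems whose split sequences form a Zariski-dense subset of $\K^d$; most ambitiously, one would pass to the function field $\K(z_1,\ldots,z_d)$ and construct a single sharp system whose split sequence is $(z_1,\ldots,z_d)$ itself. This would immediately force $\Ker\mu = 0$. For small diameters (specifically $d \leq 5$, as handled in Theorem~\ref{thm:mainpd5}), such a construction can be carried out by hand: one writes down candidate modules in explicit matrix form on a space with the shape forced by Lemma~\ref{lem:basic3} and Proposition~\ref{prop:tds}, imposes the defining relations \eqref{eq:eiej}--\eqref{eq:eiaskej} of $T$, and verifies that the split-sequence coordinates remain generic.

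The main obstacle in general is that constructing sharp tridiagonal systems with arbitrary split sequences is essentially the existence half of Conjecture~\ref{conj:main}, so this route threatens circularity. A cleaner alternative is to reason internally in $T$: equip $e^*_0 T e^*_0$ with a filtration in which $\mu(x_i) = e^*_0\tau_i(a)e^*_0$ is assigned degree $i$, and argue, using only the relations of Definition~\ref{def:T} together with the basis results of Lemmas~\ref{lem:injection} and~\ref{lem:tbasis}, that the associated graded algebra is isomorphic to $R$ with its standard grading. Injectivity of $\mu$ would then follow by a Hilbert-series comparison. The hard part in either approach is to disentangle the combinatorics of $e^*_0 T e^*_0$ from the classification conjecture itself; the fact that the authors establish the conjecture only through $d \leq 5$ strongly suggests that no general-purpose combinatorial argument of this kind is yet available, and the genuine difficulty lies in finding an inductive or structural handle on $e^*_0\tau_{i_1}(a)\tau_{i_2}(a)\cdots\tau_{i_k}(a)e^*_0$ for arbitrary $k$.
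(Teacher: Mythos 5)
Your proposal is not a proof, and to be fair neither does the paper contain one: the statement is labelled a conjecture, and the paper establishes it only for $d\leq 5$ (Theorem \ref{thm:mainpd5}). Your diagnosis of what is at stake --- the algebraic independence of $\{e^*_0\tau_i(a)e^*_0\}_{i=1}^d$ over $\K$ --- is correct, and your first sketched route is essentially the one the paper takes for $d\leq 5$. But you overestimate the circularity obstacle there, and this is the one substantive point worth correcting. The paper does \emph{not} construct sharp tridiagonal systems with generic split sequences (which, as you say, would presuppose the existence half of Conjecture \ref{conj:main}). Instead it constructs, for each $d\leq 5$, an explicit free $R$-module $V$ with a $T$-module structure --- that is, $\K$-linear operators $a,a^*$ commuting with the $R$-action for which the defining relations \eqref{eq:eiej}--\eqref{eq:eiaskej} are verified directly --- together with a vector $\phi$ satisfying $e^*_0.\phi=\phi$ and $e^*_0\tau_i(a)e^*_0.\phi = x_i\phi$ up to the normalization \eqref{eq:defyi}. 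No irreducibility, no finite dimensionality over $\K$, and no appeal to Proposition \ref{prop:goback} or to any classification result is needed; one only needs the relations of Definition \ref{def:T} to hold. Then $\mu(f).\phi=f\phi\neq 0$ for $0\neq f\in R$ forces $\Ker\mu=0$. So the ``generic point'' strategy is viable without circularity; the genuine difficulty, which your proposal and the paper both leave open, is producing such a module (or your proposed filtration/Hilbert-series argument on $e^*_0Te^*_0$) uniformly in $d$ rather than by case-by-case computation.

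As submitted, your proposal establishes nothing: it would need, at minimum, the explicit module construction carried out and the relations verified, which is exactly the content of the Appendix and the laborious computation invoked in the proof of Theorem \ref{thm:mainpd5}.
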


\medskip

We call Conjecture \ref{conj:mainp} the {\it $\mu$-conjecture}.
In Section \ref{sec:main} we show that the $\mu$-conjecture implies 
Conjecture \ref{conj:main}.

\medskip

We finish this section with some notation that is motivated by Definition 
\ref{def:split} and Corollary \ref{cor:18a}.

\medskip

\begin{definition}    \label{def:yi}
We define
\begin{equation}        \label{eq:defyi}
 y_i = (\th^*_0-\th^*_1)(\th^*_0-\th^*_2)\cdots(\th^*_0-\th^*_i)x_i
    \qquad\qquad (1 \leq i \leq d).
\end{equation}
\end{definition}

\section{The left ideal $J$ of $T$}

\indent
From now until the end of Section \ref{sec:mainproof} we adopt the following assumption.

\medskip

\begin{assumption} \label{assume}   \samepage
We assume Conjecture \ref{conj:mainp} is true.
Let $d$ denote a nonnegative integer and let
$(\{\th_i\}_{i=0}^d; \{\th^*_i\}_{i=0}^d; \{\zeta_i\}_{i=0}^d)$
denote a sequence of scalars taken from $\K$ that satisfies all three conditions
{\rm (i)}--{\rm (iii)} of  Conjecture \ref{conj:main}. 
Let $T$ denote the $\K$-algebra from Definition \ref{def:T} that is
associated with
the sequence $(\{\th_i\}_{i=0}^d; \{\th^*_i\}_{i=0}^d)$.
\end{assumption}

\medskip

With reference to Assumption \ref{assume}, and with an eye towards
proving Conjecture \ref{conj:main}, we will construct a sharp tridiagonal system
over $\K$ with parameter array 
$(\{\th_i\}_{i=0}^d; \{\th^*_i\}_{i=0}^d$; $\{\zeta_i\}_{i=0}^d)$.
To this end we define a certain left ideal $J$ of $T$ and consider the quotient
$T$-module $M=T/J$.
We will show $M$ is nonzero, finite-dimensional, and has a unique maximal 
proper $T$-submodule $M'$. 
The quotient $T$-module $L=M/M'$ will yield the desired tridiagonal system 
via Proposition \ref{prop:tds}.

\medskip
 
\begin{definition}    \label{def:30}   \samepage
Let $J$ denote the following left ideal of $T$:
\begin{equation}       \label{eq:defj}
J = T(1-e^*_0)+ \sum_{i=1}^d T g_i,
\end{equation}
where
\begin{equation}      \label{eq:gi}
g_i = e^*_0 \tau_i(a)e^*_0 -
 \frac{\zeta_i e^*_0}
      {(\th^*_0-\th^*_1)(\th^*_0-\th^*_2)\cdots(\th^*_0-\th^*_i)}
\qquad \qquad (1 \leq i \leq d).
\end{equation}
\end{definition}

\begin{lemma}    \label{lem:34b}   \samepage
We have
\begin{equation}        \label{eq:tg}
Te^*_0\cap J  = \sum_{i=1}^d T g_i.
\end{equation}
\end{lemma}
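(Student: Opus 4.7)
The plan is to prove both containments by exploiting the key observation that each $g_i$ is ``right-absorbing'' for $e^*_0$, i.e.\ $g_ie^*_0 = g_i$, while $(1-e^*_0)e^*_0 = 0$. Combined with the fact that any element of $Te^*_0$ is fixed on the right by $e^*_0$ (since $e^*_0$ is idempotent by \eqref{eq:eiej}), this forces the $T(1-e^*_0)$ component of any element of $Te^*_0 \cap J$ to vanish after right multiplication by $e^*_0$.

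First I would verify the two preliminary identities. Using $e^{*2}_0 = e^*_0$ from \eqref{eq:eiej}, I inspect the two summands in the definition \eqref{eq:gi} of $g_i$: the term $e^*_0\tau_i(a)e^*_0$ clearly satisfies $e^*_0\tau_i(a)e^*_0 \cdot e^*_0 = e^*_0\tau_i(a)e^*_0$, and the scalar multiple of $e^*_0$ behaves the same way. Therefore $g_ie^*_0 = g_i$, and as a consequence $tg_i = tg_ie^*_0 \in Te^*_0$ for every $t \in T$. This shows $\sum_{i=1}^d Tg_i \subseteq Te^*_0$, and since obviously $\sum_{i=1}^d Tg_i \subseteq J$, the inclusion $\supseteq$ in \eqref{eq:tg} follows.

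For the inclusion $\subseteq$, take $x \in Te^*_0 \cap J$. From the defining expression \eqref{eq:defj} of $J$, I write
\[
x = t(1-e^*_0) + \sum_{i=1}^d s_i g_i
\]
for some $t, s_1,\ldots,s_d \in T$. Right-multiplying by $e^*_0$, the first summand becomes $t(1-e^*_0)e^*_0 = t(e^*_0 - e^*_0) = 0$, and each $s_ig_ie^*_0 = s_ig_i$ by the identity established above. On the other hand, since $x \in Te^*_0$ we can write $x = ue^*_0$ for some $u \in T$, giving $xe^*_0 = ue^{*2}_0 = ue^*_0 = x$. Combining, $x = xe^*_0 = \sum_{i=1}^d s_ig_i$, which lies in $\sum_{i=1}^d Tg_i$, completing the proof.

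There is no substantial obstacle here; the argument is a straightforward bookkeeping using the idempotent relation for $e^*_0$ and the structure of the generators $g_i$. The only point to be careful about is checking that each $g_i$ is indeed fixed on the right by $e^*_0$, which hinges on both of its summands ending in $e^*_0$.
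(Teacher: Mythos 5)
Your proof is correct and follows essentially the same route as the paper: the paper deduces the equality from the direct sum $T = T(1-e^*_0) + Te^*_0$ together with $g_i \in Te^*_0$, and your right multiplication by $e^*_0$ is precisely the projection onto the $Te^*_0$ summand that realizes this decomposition.
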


\begin{proof}
By \eqref{eq:gi} we have $g_i \in Te^*_0$ for $1 \leq i \leq d$.
Therefore the right-hand side of \eqref{eq:tg} is contained in $Te^*_0$.
The result follows from this, line \eqref{eq:defj},
and since $T = T(1-e^*_0)+Te^*_0$ (direct sum).
\end{proof}

\begin{proposition}    \label{ex:40}   \samepage
We have
\begin{equation}     \label{eq:otods}
e^*_0Te^*_0= \K e^*_0 + e^*_0Te^*_0 \cap J
 \qquad \qquad (\text{\rm direct sum}).
\end{equation}
\end{proposition}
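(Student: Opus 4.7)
The plan is to transfer the statement, via the isomorphism $\mu$ provided by Assumption \ref{assume}, to a trivial statement about the polynomial ring $R = \K[x_1,\ldots,x_d]$: namely, that $R$ is the direct sum of $\K$ and the maximal ideal generated by $x_1-c_1,\ldots,x_d-c_d$ for any chosen point $(c_1,\ldots,c_d) \in \K^d$.

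First I would pin down where the $g_i$ live. A direct computation using $e^*_0 e^*_0 = e^*_0$ shows that each $g_i = e^*_0\tau_i(a)e^*_0 - c_i e^*_0$ (with $c_i = \zeta_i/[(\th^*_0-\th^*_1)\cdots(\th^*_0-\th^*_i)] \in \K$) satisfies $e^*_0 g_i = g_i = g_i e^*_0$, hence $g_i \in e^*_0Te^*_0$. Combined with Lemma \ref{lem:34b}, this gives
\[
e^*_0 T e^*_0 \cap J \;=\; e^*_0\bigl(Te^*_0\cap J\bigr) \;=\; e^*_0 \sum_{i=1}^d Tg_i \;=\; \sum_{i=1}^d (e^*_0Te^*_0)\,g_i,
\]
so $e^*_0Te^*_0\cap J$ is precisely the ideal of the commutative $\K$-algebra $e^*_0Te^*_0$ generated by $g_1,\ldots,g_d$.

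Next I would invoke Assumption \ref{assume}: the map $\mu: R \to e^*_0Te^*_0$ of Corollary \ref{cor:18a} is a $\K$-algebra isomorphism. Since $\mu(x_i) = e^*_0\tau_i(a)e^*_0$ and $\mu(1) = e^*_0$, we have $\mu(x_i - c_i) = g_i$, and the identity in $R$
\[
R \;=\; \K\cdot 1 \;\oplus\; (x_1-c_1,\ldots,x_d-c_d)
\]
(which follows because evaluation at $(c_1,\ldots,c_d)$ is a $\K$-algebra surjection $R\to\K$ whose kernel is the maximal ideal on the right) transfers via $\mu$ to
\[
e^*_0Te^*_0 \;=\; \K e^*_0 \;\oplus\; \sum_{i=1}^d (e^*_0Te^*_0)\,g_i.
\]
Combined with the identification of the second summand above, this yields the desired direct sum decomposition.

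The only point that requires any care is verifying that $e^*_0Te^*_0 \cap J$ is generated, as an ideal of $e^*_0Te^*_0$, by the $g_i$ (rather than just containing this ideal); for this I rely on Lemma \ref{lem:34b} together with the fact that $e^*_0$ is the multiplicative identity of $e^*_0Te^*_0$. There is no genuine obstacle here—the proposition is essentially a restatement of the elementary decomposition of $R$ at a $\K$-point, once the $\mu$-conjecture is assumed.
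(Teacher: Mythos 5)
Your proposal is correct and follows essentially the same route as the paper: both identify $e^*_0Te^*_0\cap J$ with the ideal $\sum_{i=1}^d e^*_0Te^*_0\,g_i$ of the commutative algebra $e^*_0Te^*_0$ and then pull the decomposition back through the isomorphism $\mu$ to the elementary splitting of $R$ as $\K\cdot 1$ plus the maximal ideal at a point. The only cosmetic difference is that the paper phrases the ideal in terms of the rescaled generators $y_i-\zeta_i$ from Definition \ref{def:yi} rather than $x_i-c_i$, which generate the same ideal since the scalars $(\th^*_0-\th^*_1)\cdots(\th^*_0-\th^*_i)$ are nonzero.
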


\begin{proof}
We claim   
\begin{equation}    \label{eq:inj}
e^*_0Te^*_0\cap J  = \sum_{i=1}^d e^*_0Te^*_0 g_i.
\end{equation}
To obtain \eqref{eq:inj}, observe that $e^*_0Te^*_0$ contains $g_i$ for 
$1 \leq i \leq d$, so $e^*_0Te^*_0$ contains $\sum_{i=1}^d e^*_0Te^*_0 g_i$.
By Definition \ref{def:30} the ideal $J$ contains 
$\sum_{i=1}^d e^*_0Te^*_0 g_i$ so 
$e^*_0Te^*_0 \cap J$ contains $\sum_{i=1}^d e^*_0Te^*_0 g_i$.
To obtain the reverse inclusion in \eqref{eq:inj}, we fix 
$x \in e^*_0Te^*_0 \cap J$ and show $x \in \sum_{i=1}^d e^*_0Te^*_0 g_i$.
Since $x \in e^*_0Te^*_0$ we have $e^*_0x=x$.
By Lemma \ref{lem:34b} and since $e^*_0Te^*_0 \subseteq Te^*_0$ there exist 
$t_i \in T$ $(1 \leq i \leq d)$ such that $x=\sum_{i=1}^d t_i g_i$. 
In this equation we multiply each term on the left by $e^*_0$ and use
$e^*_0g_i=g_i$ to get $x= \sum_{i=1}^d e^*_0t_ie^*_0 g_i$.
Therefore  $x \in \sum_{i=1}^d e^*_0Te^*_0 g_i$.
We have proved \eqref{eq:inj}.
Now we can easily show \eqref{eq:otods}.
By Corollary \ref{cor:18a}, Definition \ref{def:yi}, and line \eqref{eq:gi},
the map $\mu$ satisfies
\begin{equation}    \label{eq:zz}
 \mu(y_i - \zeta_i)
 = (\th^*_0-\th^*_1)(\th^*_0-\th^*_2)\cdots(\th^*_0-\th^*_i)g_i
    \qquad\qquad    (1 \leq i \leq d).
\end{equation}
Let ${\mathcal J}$ denote the ideal of $R$ generated by 
$\{y_i-\zeta_i\}_{i=1}^d$, so that 
${\cal J} = \sum_{i=1}^d R(y_i -\zeta_i)$.
By Definition \ref{def:18} and \eqref{eq:defyi}
we obtain a direct sum of $\K$-vector spaces
$R=\K 1 + {\mathcal J}$. 
In this equation we apply the isomorphism $\mu$ to each term.
The $\mu$-image of $R$ (resp. $\K 1$) is $e^*_0Te^*_0$ (resp. $\K e^*_0$).
By \eqref{eq:inj}, \eqref{eq:zz} the $\mu$-image of ${\mathcal J}$ is 
$e^*_0Te^*_0\cap J$. 
Line \eqref{eq:otods} follows.
\end{proof}

\section{The $T$-module $M$}

\begin{definition}    \label{def:49}   \samepage
Let $J$ denote the left ideal of $T$ from Definition \ref{def:30}.
Observe that $T/J$ has a natural $T$-module structure;
we abbreviate this $T$-module by $M$.
We abbreviate $\xi$ for the element $1+J$ of $M$.
We note that 
\begin{equation}    \label{eq:vmeaning}
J = \lbrace t \in T\,|\,t\xi=0\rbrace.
\end{equation}
\end{definition}

\begin{lemma}    \label{lem:100}   \samepage
The following {\rm (i), (ii)} hold.
\begin{itemize}
\item[\rm (i)]
$M=T\xi$.
\item[\rm (ii)]
$\xi$ is a basis for $e^*_0M$. 
\end{itemize}
\end{lemma}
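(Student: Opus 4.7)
The plan is to handle the two parts separately, with part (i) being essentially tautological and part (ii) resting entirely on Proposition \ref{ex:40}.

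For part (i), I would observe that any element of $M = T/J$ has the form $t + J$ for some $t \in T$, and by the definition of the $T$-module structure on $M$ together with $\xi = 1 + J$, this element equals $t\xi$. So $M = T\xi$ and no real work is required.

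For part (ii), I would verify three things: that $\xi$ lies in $e^*_0 M$, that $\xi$ spans $e^*_0 M$, and that $\xi$ is nonzero. The first is immediate since $1 - e^*_0$ is one of the generators of $J$ in \eqref{eq:defj}, which forces $(1-e^*_0)\xi = 0$ and hence $e^*_0 \xi = \xi$. For the spanning claim, take an arbitrary element of $e^*_0 M$; using part (i) and $e^*_0 \xi = \xi$ it can be written as $e^*_0 t e^*_0 \xi$ for some $t \in T$. Now apply Proposition \ref{ex:40} to decompose $e^*_0 t e^*_0 = \alpha e^*_0 + j$ with $\alpha \in \K$ and $j \in e^*_0 T e^*_0 \cap J$. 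Since $j \in J$ annihilates $\xi$, the element becomes $\alpha\xi$, proving that $\xi$ spans $e^*_0 M$.

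The one point that actually requires the strength of Proposition \ref{ex:40} is showing $\xi \neq 0$, i.e. that $1 \notin J$; in fact it suffices to show $e^*_0 \notin J$, since then $e^*_0 \xi = \xi \neq 0$. If $e^*_0$ were in $J$, then $e^*_0 = e^*_0 \cdot e^*_0 \cdot 1$ would lie in $e^*_0 T e^*_0 \cap J$, contradicting the directness of the sum $e^*_0 T e^*_0 = \K e^*_0 \oplus (e^*_0 T e^*_0 \cap J)$ together with the nonvanishing of $e^*_0$ coming from Lemma \ref{lem:injection}(ii). This is the subtle step, and it is exactly the reason Proposition \ref{ex:40} was proved first. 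With these pieces in place, $\xi$ is a nonzero spanning vector for $e^*_0 M$, hence a basis.
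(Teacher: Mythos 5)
Your proposal is correct and follows essentially the same route as the paper: part (i) is the tautology $t+J = t\xi$, and part (ii) combines $\xi = e^*_0\xi$ (from $1-e^*_0 \in J$) with the decomposition $e^*_0Te^*_0 = \K e^*_0 + e^*_0Te^*_0\cap J$ of Proposition \ref{ex:40} to get $e^*_0M = e^*_0Te^*_0\xi = \K\xi$, with the nonvanishing of $\xi$ coming from the directness of that sum (equivalently $J \neq T$). The only cosmetic difference is that you pass through $e^*_0 \notin J$ rather than $1 \notin J$ directly; the substance is identical.
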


\begin{proof}
(i): Recall $M=T/J$ and $\xi=1+J$.

(ii): 
Observe $J\not=T$ by \eqref{eq:otods} so $1 \not\in J$. 
Now $\xi \not=0$ in view of Definition \ref{def:49}.
Since $1-e^*_0 \in J$ we have $(1-e^*_0)\xi=0$, so $\xi = e^*_0\xi$.
Using this and $M=T\xi$ we find $e^*_0M=e^*_0Te^*_0\xi$.
In this equation we evaluate $e^*_0Te^*_0$ using \eqref{eq:otods}, 
\eqref{eq:vmeaning} to get $e^*_0M=\K \xi$. 
The result follows.
\end{proof}

\medskip

Our next goal is to show that the $\K$-vector space $M$ has finite dimension.
We will use the following notation.
For subsets $X, Y$ of $T$ let $XY$ denote the $\K$-subspace of $T$ spanned by
$\{xy \,|\, x \in X,\; y \in Y\}$.

\medskip

\begin{lemma}{\rm \cite[Corollary~4.5]{nomstructure}}\label{cor:EsrDDsDDss}
\samepage
For $0 \leq r,s \leq d$ we have
\[
  e^*_r D D^* D e^*_s =
    \sum_{k=0}^{\lfloor (r+s)/2 \rfloor} 
        e^*_r D e^*_k D e^*_s,
\]
where $\lfloor x \rfloor$ denotes the greatest integer less than or equal to $x$.
\end{lemma}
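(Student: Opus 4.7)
The plan is to combine the basis description of $D^*$ with a downward induction that peels off successive idempotents. By Lemma \ref{lem:injection}(ii), $\{e^*_k\}_{k=0}^d$ is a $\K$-basis of $D^*$, so $D^* = \sum_{k=0}^d \K e^*_k$ and hence
\[
e^*_r D D^* D e^*_s = \sum_{k=0}^d e^*_r D e^*_k D e^*_s.
\]
This immediately gives the inclusion $\supseteq$ in the stated identity (each $e^*_k$ lies in $D^*$), so the entire problem reduces to showing that for every $k$ with $k > \lfloor (r+s)/2 \rfloor$ one has $e^*_r D e^*_k D e^*_s \subseteq \sum_{k' < k} e^*_r D e^*_{k'} D e^*_s$; iterating this absorption down to $\lfloor(r+s)/2\rfloor$ delivers the missing inclusion.

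For the inductive step I would pick a spanning element $e^*_r a^i e^*_k a^j e^*_s$, where by \eqref{eq:esiakesj} one may assume $i \geq |r-k|$ and $j \geq |k-s|$. The hypothesis $2k > r+s$ forces $(k-r)+(k-s) > 0$, so at least one of $i$, $j$ is strictly positive, meaning there is an $a$ available to ``peel off'' adjacent to the middle $e^*_k$. The main manipulation is the nearest-neighbour expansion
\[
a\,e^*_k \;=\; e^*_{k-1}\, a\,e^*_k + e^*_k\,a\,e^*_k + e^*_{k+1}\,a\,e^*_k,
\]
which is the $m=1$ instance of \eqref{eq:esiakesj}, together with the symmetric version on the right. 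Substituting these and collecting terms yields three kinds of contributions: those whose middle idempotent has been replaced by $e^*_{k-1}$ (acceptable, as $k-1 < k$); those that keep $e^*_k$ but redistribute the exponents, which one iterates; and those with $e^*_{k+1}$ in the middle, which are removed by the downward induction (applied at $k+1$) together with the lower-bound relation \eqref{eq:esiakesj} applied to the now-shortened outer factor $e^*_r a^{i-1} e^*_{k+1}$ (or $e^*_{k+1} a^{j-1} e^*_s$).

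The main obstacle is the combinatorial bookkeeping required to extract exactly the floor bound $\lfloor(r+s)/2\rfloor$ rather than some cruder estimate: the pairing of lowering moves from the two sides of the middle idempotent is what produces this symmetric bound. This precise accounting is carried out in \cite[Corollary~4.5]{nomstructure}, and by Note \ref{note:explain} the proof there applies verbatim under our current hypotheses on $\{\th_i\}_{i=0}^d$ and $\{\th^*_i\}_{i=0}^d$, so in practice I would simply invoke that result. If instead one prefers a self-contained argument, the induction above furnishes one at the cost of a few pages of explicit manipulation.
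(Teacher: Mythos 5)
Your proposal ends by invoking \cite[Corollary~4.5]{nomstructure} together with Note \ref{note:explain}, which is exactly what the paper does: it states this lemma with that citation and supplies no independent proof. The inductive sketch you give beforehand is a plausible outline of how such a proof would go, but since you ultimately (and correctly) fall back on the cited result, your argument matches the paper's treatment.
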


\begin{definition}   \samepage
Fix an integer $m \geq 1$.
A sequence of integers $(k_0, k_1, \ldots, k_m)$ is called {\it convex} 
whenever $k_{i-1}-k_i \geq k_i - k_{i+1}$ for $1 \leq i \leq m-1$.
\end{definition}

\begin{lemma}    \label{lem:convex}   \samepage
For  $0 \leq r,s\leq d$ and $n\geq 0$ the space
\begin{equation}   \label{eq:Es0DDsDEs0}
  e^*_r D D^* D D^* D \cdots D D^* D e^*_s
    \qquad\qquad (\text{$n+1$ $D$'s})
\end{equation}
is equal to
\begin{equation}   \label{eq:Es0DEs0DEs0}
  \sum e^*_r D e^*_{k_1} D e^*_{k_2} D \cdots D e^*_{k_n} D e^*_s,
\end{equation}
where the sum is over all sequences $(k_1, k_2, \ldots, k_n)$ such that 
$0 \leq k_i \leq  d$ $(1 \leq i \leq n)$ and $(r, k_1, k_2, \ldots, k_n, s)$ 
is convex.
\end{lemma}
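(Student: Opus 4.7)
The plan is to prove the two inclusions separately. The inclusion \eqref{eq:Es0DEs0DEs0}~$\subseteq$~\eqref{eq:Es0DDsDEs0} is immediate from $e^*_{k_i}\in D^*$. The reverse inclusion is where the work lies. For it, Lemma \ref{lem:injection}(ii) gives $D^*=\bigoplus_{k=0}^d \K e^*_k$, and distributing subspace multiplication through the alternating product yields
\[
\eqref{eq:Es0DDsDEs0}\;=\;\sum_{(k_1,\ldots,k_n)\in\{0,\ldots,d\}^n} T(\vec{k}),\qquad T(\vec{k}):=e^*_r D e^*_{k_1}D\cdots D e^*_{k_n} D e^*_s.
\]
It therefore suffices to show that each $T(\vec{k})$ is contained in the sum \eqref{eq:Es0DEs0DEs0}.

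The central tool is a local ``smoothing'' identity. Fix $i\in\{1,\ldots,n\}$, set $k_0=r$ and $k_{n+1}=s$, and define $P_i:=e^*_r D e^*_{k_1}D\cdots D e^*_{k_{i-1}}$ and $Q_i:=e^*_{k_{i+1}}D\cdots D e^*_s$, so that $T(\vec{k})=P_i\cdot D e^*_{k_i} D\cdot Q_i$. Using $P_i e^*_{k_{i-1}}=P_i$ and $e^*_{k_{i+1}}Q_i=Q_i$, summing over $k_i$ with the remaining coordinates frozen yields
\[
\sum_{k_i=0}^d T(\vec{k}_{\ne i},k_i)\;=\;P_i\cdot \bigl(e^*_{k_{i-1}} D D^* D e^*_{k_{i+1}}\bigr)\cdot Q_i\;=\;\sum_{k_i'\le \lfloor(k_{i-1}+k_{i+1})/2\rfloor} T(\vec{k}_{\ne i},k_i'),
\]
where the last equality is Lemma \ref{cor:EsrDDsDDss}. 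In particular $T(\vec{k})$ itself lies in this final sum, for every choice of $i$.

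The remainder is an induction on $\phi(\vec{k}):=k_1+k_2+\cdots+k_n$. In the base case $\phi(\vec{k})=0$ every $k_i$ vanishes, $(r,0,\ldots,0,s)$ is trivially convex, and $T(\vec{k})$ is already a term in \eqref{eq:Es0DEs0DEs0}. For the inductive step, if $\vec{k}$ is convex we are done; otherwise choose a position $i$ at which $(k_{i-1},k_i,k_{i+1})$ fails convexity, so $k_i>\lfloor(k_{i-1}+k_{i+1})/2\rfloor$ and in particular $k_i\ge 1$. The smoothing identity then exhibits $T(\vec{k})$ as a subspace sum of terms $T(\vec{k}_{\ne i},k_i')$ with $k_i'<k_i$; each such tuple has strictly smaller $\phi$-value, so the inductive hypothesis places it in \eqref{eq:Es0DEs0DEs0}, and hence $T(\vec{k})$ does too.

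The obstacle I would expect to stumble on is the tempting but doomed ``one-position-at-a-time'' strategy: impose convexity first at position $1$, then at position $2$, and so on. This fails because each convexity condition involves three consecutive coordinates, so a smoothing applied near a position where convexity has already been tightened can destroy that earlier condition. Induction on the scalar $\phi$ sidesteps this nonlocal interference entirely, since every application of the smoothing identity strictly decreases $\phi$, regardless of which position it acts on.
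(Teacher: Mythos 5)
Your proof is correct and follows essentially the same route as the paper's: decompose \eqref{eq:Es0DDsDEs0} over the basis $\{e^*_k\}$ of $D^*$, then use Lemma \ref{cor:EsrDDsDDss} to replace a non-convex entry $k_i$ by entries at most $\lfloor(k_{i-1}+k_{i+1})/2\rfloor<k_i$, with the argument organized by the weight $\sum_i k_i$. The only cosmetic difference is that you run a strong induction on the weight where the paper phrases the same descent as a minimal-weight counterexample.
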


\begin{proof}
We assume $n \geq 1$; otherwise there is nothing to prove.
Since $\lbrace e^*_i\rbrace_{i=0}^d$ is a basis for $D^*$
it suffices to show that
\begin{equation}            \label{eq:Es0DEst1DEs0}
   e^*_r D e^*_{k_1} D e^*_{k_2} D \cdots 
             D e^*_{k_n} D e^*_s
\end{equation}
is contained in \eqref{eq:Es0DEs0DEs0} for all sequences
$(k_1, k_2, \ldots, k_n)$
such that $0 \leq k_i  \leq d$ $(1 \leq i \leq n)$.
For each such sequence $(k_1, k_2, \ldots, k_n)$   we define the {\em weight}
to be $\sum_{i=1}^n k_i$.
Suppose there exists a sequence $(k_1, k_2, \ldots, k_n)$ such that 
\eqref{eq:Es0DEst1DEs0} is not contained in \eqref{eq:Es0DEs0DEs0}.
Of all such sequences, pick one with minimal weight. Denote this weight by $w$.
For notational convenience define $k_0=r$ and $k_{n+1}=s$.
The sequence $(k_0, k_1, k_2, \ldots, k_n,k_{n+1})$ is not convex, so there 
exists an integer $i$ $(1 \leq i \leq n)$ such that 
$k_{i-1}-k_i < k_i - k_{i+1}$.
Abbreviate $h = \lfloor (k_{i-1}+k_{i+1})/2 \rfloor$ and note that $h< k_i$.
By Lemma \ref{cor:EsrDDsDDss} the space \eqref{eq:Es0DEst1DEs0} is contained
in the space
\begin{equation}          \label{eq:sumEs0DEst1EsDEs0}
 \sum_{\ell=0}^h
  e^*_r D e^*_{k_1} D \cdots D 
  e^*_{k_{i-1}} D e^*_{\ell} D e^*_{k_{i+1}} \cdots D e^*_{k_n} D e^*_s.
\end{equation}
For $0 \leq \ell \leq h$ the $\ell$-summand in \eqref{eq:sumEs0DEst1EsDEs0} has 
weight less than $w$, so this summand is contained in \eqref{eq:Es0DEs0DEs0}.
Therefore \eqref{eq:Es0DEst1DEs0} is contained in \eqref{eq:Es0DEs0DEs0},
for a contradiction. The result follows.
\end{proof}

\begin{lemma}      \label{lem:mspan}   \samepage
For $1 \leq r \leq d$ the $\K$-vector space $e^*_rM$ is equal to
\begin{equation}   \label{eq:Es0DEs0DEs0m}
  \sum e^*_r D e^*_{k_1} D e^*_{k_2} D \cdots D e^*_{k_m} D \xi,
\end{equation}
where the sum is over all sequences $(k_1, k_2, \ldots, k_m)$ $(m\geq 0)$
such that  $r> k_1 > k_2 > \cdots > k_m > 0$ and 
$(r, k_1, k_2, \ldots, k_m, 0)$ is convex.
\end{lemma}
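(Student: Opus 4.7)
The plan is to write $e^*_r M$ as a span of alternating $D$/$D^*$ products acting on $\xi$, apply Lemma \ref{lem:convex} to restrict to convex index sequences, and then exploit $e^*_0 M = \K\xi$ (Lemma \ref{lem:100}(ii)) to sharpen convex sequences into strictly decreasing ones avoiding $0$.

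First, $M = T\xi$ by Lemma \ref{lem:100}(i), and since $1 - e^*_0 \in J$ we have $\xi = e^*_0 \xi$, so $e^*_r M = e^*_r T e^*_0 \xi$. The subalgebras $D$ and $D^*$ generate $T$, so $T$ is spanned by finite alternating products of $D$ and $D^*$. Using $D^* = \sum_{i=0}^d \K e^*_i$ from Lemma \ref{lem:injection}(ii) to break each internal $D^*$-factor into primitive idempotents $e^*_{k_j}$, and absorbing any outer $D^*$-factor into the flanking $e^*_r$ or $e^*_0$, one finds that $e^*_r T e^*_0$ is spanned by products $e^*_r D e^*_{k_1} D \cdots D e^*_{k_m} D e^*_0$ for $m \ge 0$ and $0 \le k_j \le d$. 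For each fixed $m$, Lemma \ref{lem:convex} with $s=0$ restricts this span to those sequences for which $(r, k_1, \ldots, k_m, 0)$ is convex. Applying $e^*_0 \xi = \xi$ then represents $e^*_r M$ as the span of $e^*_r D e^*_{k_1} D \cdots D e^*_{k_m} D \xi$ over convex sequences $(r, k_1, \ldots, k_m, 0)$.

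It remains to refine ``convex'' to the stated form. Fix a convex sequence with $k_0 = r$, $k_{m+1} = 0$, and set $d_i := k_{i-1} - k_i$; these differences are non-increasing and sum to $r \ge 1$. If every $d_i > 0$ the sequence is already strictly decreasing with $r > k_1 > \cdots > k_m > 0$, as desired. Otherwise, let $j$ be minimal with $d_j \le 0$; the non-increasing property forces $d_j, d_{j+1}, \ldots, d_{m+1} \le 0$, and combined with $k_i \ge 0$ this yields $k_j = k_{j+1} = \cdots = k_m = 0$. Since $e^*_0 M = \K\xi$, an easy induction shows $(D e^*_0)^p D\xi \subseteq D\xi$ for all $p \ge 0$: indeed $D e^*_0 D\xi \subseteq D(e^*_0 M) = D \K\xi = D\xi$, and the inductive step is the same. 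Hence the original expression is contained in $e^*_r D e^*_{k_1} \cdots D e^*_{k_{j-1}} D\xi$, the expression corresponding to the truncated sequence $(r, k_1, \ldots, k_{j-1}, 0)$. This truncated sequence is itself convex, because its last difference $k_{j-1}$ satisfies $d_{j-1} \ge d_j = k_{j-1}$; and all its interior entries $k_1, \ldots, k_{j-1}$ are positive by minimality of $j$. Since its last difference $k_{j-1}$ is positive and the differences are non-increasing, all differences are positive, so the truncated sequence is strictly decreasing $r > k_1 > \cdots > k_{j-1} > 0$, as required. The reverse inclusion is immediate, since each such strictly decreasing convex sequence already appears in the span.

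The main obstacle I expect is the last step: carefully justifying that intermediate $e^*_0$-tails collapse via $e^*_0 M = \K\xi$ and that the resulting truncation preserves convexity. The rest is a routine assembly of Lemmas \ref{lem:injection}, \ref{lem:100}, and \ref{lem:convex}.
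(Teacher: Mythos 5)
Your overall strategy is exactly the paper's: write $e^*_rM = e^*_rTe^*_0\xi$, expand $e^*_rTe^*_0$ into alternating $D$/$D^*$ products, apply Lemma \ref{lem:convex} with $s=0$, and then use $e^*_0M=\K\xi$ to collapse the terms indexed by convex sequences not of the stated form. (The paper compresses that last step into one sentence; you are right that it is the only point needing care.) However, your refinement step contains an off-by-one error that makes its conclusion false as written. From $d_j\le 0$ and the non-increasing differences you correctly get $k_j\le k_{j+1}\le\cdots\le k_{m+1}=0$, hence $k_j=\cdots=k_m=0$; but $d_j=k_{j-1}-k_j\le 0$ together with $k_j=0$ also forces $k_{j-1}=0$. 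Consequently your truncated sequence $(r,k_1,\ldots,k_{j-1},0)$ still has the interior entry $k_{j-1}=0$; your assertion that ``all its interior entries $k_1,\ldots,k_{j-1}$ are positive by minimality of $j$'' is wrong (minimality of $j$ gives $r>k_1>\cdots>k_{j-1}$, not $k_{j-1}>0$), and its last difference is $k_{j-1}=0$, not positive, so the sequence is not of the required form.

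The repair is immediate: truncate one step further, to $(r,k_1,\ldots,k_{j-2},0)$. Your collapsing argument $(De^*_0)^pD\xi\subseteq D\xi$ absorbs the entire zero tail $k_{j-1}=\cdots=k_m=0$; the shorter sequence is convex because its new last difference is $k_{j-2}$ and $d_{j-2}\ge d_{j-1}=k_{j-2}-k_{j-1}=k_{j-2}$; and now all of its differences satisfy $d_1\ge\cdots\ge d_{j-2}\ge k_{j-2}=d_{j-1}>0$ by minimality of $j$, so $r>k_1>\cdots>k_{j-2}>0$ as required (when $j=2$ this degenerates to the $m=0$ term $e^*_rD\xi$). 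With this correction your proof is complete and coincides with the paper's.
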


\begin{proof}
We have $M=T\xi$ and $\xi= e^*_0 \xi$ so $e^*_rM = e^*_rTe^*_0\xi$.
The algebra $T$ is generated by $D,D^*$.
Therefore $e^*_rTe^*_0$  is the sum over $n=0,1,2,\ldots$ of terms 
\eqref{eq:Es0DDsDEs0} (with $s=0$).
We apply these terms to $\xi$ and simplify the result using Lemma
\ref{lem:100}(ii) and Lemma \ref{lem:convex}; this yields terms contained in 
the sum \eqref{eq:Es0DEs0DEs0m}. The result follows.
\end{proof}

\begin{proposition}     \label{prop:mfd}   \samepage
The $\K$-vector space $M$ has finite dimension.
\end{proposition}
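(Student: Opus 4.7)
The plan is to decompose $M$ using the dual idempotents and then bound each piece using Lemma \ref{lem:mspan}. First, from the relation $\sum_{i=0}^d e^*_i = 1$ in $T$, every element of $M$ can be written as $\sum_{r=0}^d e^*_r \xi'$ for some $\xi' \in M$, so
\[
M = \sum_{r=0}^d e^*_r M.
\]
Hence it suffices to show that $e^*_r M$ has finite $\K$-dimension for each $r$ with $0 \leq r \leq d$.

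The case $r = 0$ is immediate from Lemma \ref{lem:100}(ii): $e^*_0 M = \K \xi$ has dimension $1$. For $1 \leq r \leq d$, Lemma \ref{lem:mspan} expresses $e^*_r M$ as a sum of subspaces of the form
\[
e^*_r D e^*_{k_1} D e^*_{k_2} \cdots D e^*_{k_m} D \xi,
\]
taken over sequences $(k_1,\ldots,k_m)$ with $r > k_1 > \cdots > k_m > 0$ such that $(r,k_1,\ldots,k_m,0)$ is convex. Since the $k_i$ are strictly decreasing integers in $\{1,\ldots,r-1\}$, the indexing set is finite (bounded by $2^{r-1}$, say). The convexity condition only further restricts this finite collection.

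The second step is to show that each such summand is finite-dimensional. This is where the finite-dimensionality of $D$ is essential: by Lemma \ref{lem:injection}(i), $\{a^i\}_{i=0}^d$ spans $D$, so $\dim_\K D \leq d+1$. Therefore each space $e^*_r D e^*_{k_1} D \cdots D e^*_{k_m} D \xi$ is spanned by the finite set of elements $e^*_r a^{i_0} e^*_{k_1} a^{i_1} \cdots e^*_{k_m} a^{i_m} \xi$ with $0 \leq i_j \leq d$, which has at most $(d+1)^{m+1}$ elements. Combining the two bounds, $e^*_r M$ is a finite sum of finite-dimensional subspaces, hence finite-dimensional, and summing over $r$ gives $\dim_\K M < \infty$.

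I do not foresee a serious obstacle here, since Lemma \ref{lem:mspan} has already done the hard combinatorial work of reducing an arbitrary element of $e^*_r M$ to a sum of ``monomials'' indexed by a finite set of convex decreasing sequences. The remaining argument is essentially that a product of finitely many finite-dimensional subspaces of $T$, applied to a single vector, spans a finite-dimensional subspace of $M$.
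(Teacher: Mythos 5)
Your proposal is correct and follows exactly the paper's own argument: decompose $M=\sum_{r=0}^d e^*_rM$, dispose of $r=0$ via Lemma \ref{lem:100}(ii), and for $1\leq r\leq d$ invoke Lemma \ref{lem:mspan} together with the finiteness of the indexing set and the finite-dimensionality of $D$. You merely make explicit the two finiteness counts that the paper states without elaboration.
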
 

\begin{proof}
We have $M=\sum_{r=0}^d e^*_rM$. The subspace $e^*_0M$ has dimension $1$ by
Lemma \ref{lem:100}(ii). For $1 \leq r \leq d$ the subspace $e^*_rM$ has 
finite dimension by Lemma \ref{lem:mspan}, because in the sum 
\eqref{eq:Es0DEs0DEs0m} there are only finitely many terms and each term has 
finite dimension.
\end{proof}

\begin{lemma}     \label{lem:tauc}   \samepage
For $0 \leq i \leq d$ the following holds on $M$:
\begin{equation}     \label{eq:tauc}
e^*_0\tau_i(a)e^*_0 =
 \frac{\zeta_ie^*_0}
      {(\th^*_0-\th^*_1)(\th^*_0-\th^*_2)\cdots(\th^*_0-\th^*_i)}.
\end{equation}
\end{lemma}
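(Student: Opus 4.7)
The statement asserts equality of two elements of $e^*_0 T e^*_0$ as operators on $M$. Setting
$$g_i = e^*_0\tau_i(a)e^*_0 - \frac{\zeta_i e^*_0}{(\th^*_0-\th^*_1)(\th^*_0-\th^*_2)\cdots(\th^*_0-\th^*_i)},$$
the claim is equivalent to showing that $g_i$ acts as zero on $M$ for $0 \leq i \leq d$. The plan is to combine the definition of $J$ with Lemma \ref{lem:100}(ii), which identifies $e^*_0 M$ as the one-dimensional subspace $\K\xi$.

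For $i=0$, both $\tau_0(a)$ and the product in the denominator are empty, and $\zeta_0 = 1$ by condition (ii) of Conjecture \ref{conj:main}; thus $g_0 = e^*_0 - e^*_0 = 0$ already in $T$, so there is nothing to prove. For $1 \leq i \leq d$, the element $g_i$ is precisely one of the generators of $J$ listed in \eqref{eq:gi}, so by Definition \ref{def:30} we have $g_i \in J$. Via \eqref{eq:vmeaning} this immediately gives $g_i\xi = 0$ in $M$.

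It remains to upgrade the annihilation of $\xi$ to annihilation of all of $M$. Inspection of \eqref{eq:gi} shows that $g_i e^*_0 = g_i$ (using that $e^*_0$ is idempotent), so for any $m \in M$ the action factors as $g_i m = g_i(e^*_0 m)$. By Lemma \ref{lem:100}(ii) we have $e^*_0 m \in e^*_0 M = \K\xi$, say $e^*_0 m = \alpha \xi$ for some $\alpha \in \K$, whence $g_i m = \alpha g_i \xi = 0$. Thus $g_i$ annihilates all of $M$, establishing the identity. The argument is essentially a bookkeeping exercise and presents no serious obstacle; the only nontrivial structural input is Lemma \ref{lem:100}(ii), which ensures that the cyclic vector $\xi$ spans all of $e^*_0 M$ and thereby lets information at the single vector $\xi$ propagate throughout $M$.
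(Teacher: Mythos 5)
Your proof is correct and follows essentially the same route as the paper's: handle $i=0$ directly from $\tau_0=1$, $e^{*2}_0=e^*_0$, $\zeta_0=1$, and for $1\le i\le d$ use $g_i\in J$ to get $g_i\xi=0$, then propagate to all of $M$ via $g_ie^*_0=g_i$ and Lemma \ref{lem:100}(ii). No issues.
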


\begin{proof}
First assume $i=0$. Then \eqref{eq:tauc} holds since $\tau_0=1$, 
$e^{*2}_0=e^*_0$, and $\zeta_0=1$.
Next assume $1 \leq i \leq d$.
To  show that \eqref{eq:tauc} holds on $M$ we show $g_iM=0$ where $g_i$ is from
\eqref{eq:gi}.
By \eqref{eq:vmeaning} and since $g_i \in J$ we have $g_i\xi=0$.
By this and Lemma \ref{lem:100}(ii) we find $g_ie^*_0M=0$.
Now $g_iM=0$ since $g_i e^*_0 = g_i$.
\end{proof}

\begin{lemma}    \label{lem:triple}   \samepage
The elements $e^*_0e_0e^*_0$, $e^*_0e_de^*_0$ are nonzero on $M$. 
\end{lemma}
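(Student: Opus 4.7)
The plan is to exploit $e^*_0M=\K\xi$ from Lemma~\ref{lem:100}(ii): since $e^*_0e_0e^*_0$ and $e^*_0e_de^*_0$ both lie in $e^*_0Te^*_0$, each acts on $M$ as a scalar on $\K\xi$ and as zero on $(1-e^*_0)M$, so it is enough to show that $e^*_0e_0e^*_0\,\xi$ and $e^*_0e_de^*_0\,\xi$ are nonzero scalar multiples of $\xi$. For the second, \eqref{eq:eiesi} gives $e_d=\tau_d(a)/\tau_d(\th_d)$, and Lemma~\ref{lem:tauc} with $i=d$ yields
\[
e^*_0e_de^*_0\,\xi \;=\; \frac{\zeta_d}{\tau_d(\th_d)\,(\th^*_0-\th^*_1)\cdots(\th^*_0-\th^*_d)}\,\xi,
\]
which is nonzero because $\zeta_d\neq 0$ by condition (ii) of Conjecture~\ref{conj:main} (available through Assumption~\ref{assume}).

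For $e^*_0e_0e^*_0$, \eqref{eq:eiesi} gives $e_0=\eta_d(a)/\eta_d(\th_0)$, so I would expand $\eta_d(a)$ in the basis $\{\tau_i(a)\}_{i=0}^d$ of $D$ (Lemma~\ref{lem:tbasis}(i)) and apply Lemma~\ref{lem:tauc} termwise. The crucial ingredient is the polynomial identity
\[
\eta_d(\lambda) \;=\; \sum_{i=0}^d \eta_{d-i}(\th_0)\,\tau_i(\lambda),
\]
valid for any distinct scalars $\{\th_i\}_{i=0}^d$. I would prove it by telescoping with $B_i=\prod_{j=1}^i(\lambda-\th_j)\prod_{j=i+1}^d(\th_0-\th_j)$, which satisfies $B_0=\eta_d(\th_0)$, $B_d=\eta_d(\lambda)$, and $B_i-B_{i-1}=(\lambda-\th_0)\prod_{j=1}^{i-1}(\lambda-\th_j)\cdot\eta_{d-i}(\th_0)=\tau_i(\lambda)\eta_{d-i}(\th_0)$.

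Substituting $\lambda\mapsto a$, sandwiching with $e^*_0$, using Lemma~\ref{lem:tauc}, and rewriting $(\th^*_0-\th^*_1)\cdots(\th^*_0-\th^*_i)=\eta^*_d(\th^*_0)/\eta^*_{d-i}(\th^*_0)$, one arrives at
\[
e^*_0e_0e^*_0\,\xi \;=\; \frac{1}{\eta_d(\th_0)\,\eta^*_d(\th^*_0)}\,\sum_{i=0}^d \eta_{d-i}(\th_0)\,\eta^*_{d-i}(\th^*_0)\,\zeta_i\cdot\xi,
\]
which is nonzero by \eqref{eq:ineq} in condition (ii) of Conjecture~\ref{conj:main}. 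The main obstacle is spotting that the expansion $\eta_d=\sum_i\eta_{d-i}(\th_0)\tau_i$ is precisely what converts $e^*_0e_0e^*_0\xi$ into the weighted sum appearing in \eqref{eq:ineq}; once this identity is guessed, both its telescoping proof and the remaining computation are routine.
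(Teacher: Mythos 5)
Your proof is correct and follows essentially the same route as the paper: write $e_d=\tau_d(a)/\tau_d(\th_d)$ and $e_0=\eta_d(a)/\eta_d(\th_0)$, expand $\eta_d=\sum_{i=0}^d\eta_{d-i}(\th_0)\tau_i$, and apply Lemma \ref{lem:tauc} to reduce nonvanishing to $\zeta_d\neq 0$ and the inequality \eqref{eq:ineq}. The only difference is that you prove the expansion identity by telescoping, whereas the paper cites it from \cite[Proposition 5.5]{NT:mu}; your telescoping argument is valid.
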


\begin{proof}
By Lemma \ref{lem:100}(ii) $e^*_0$ is nonzero on $M$.
Concerning $e^*_0e_de^*_0$, by the equation on the left in \eqref{eq:eiesi} 
we have  $e_d=\tau_d(a)\tau_d(\theta_d)^{-1}$.
By Lemma \ref{lem:tauc} (with $i=d$)
 $e^*_0\tau_d(a)e^*_0=\eta^*_d(\theta^*_0)^{-1}\zeta_d e^*_0$ on $M$.
Therefore 
$e^*_0e_de^*_0 =\tau_d(\theta_d)^{-1} \eta^*_d(\theta^*_0)^{-1}\zeta_d e^*_0$ 
on $M$. By this and since  $\zeta_d\not=0$ we find $e^*_0e_de^*_0$ is nonzero 
on $M$.
Concerning $e^*_0e_0e^*_0$, by the equation on the left in \eqref{eq:eiesi} 
we have  $e_0=\eta_d(a)\eta_d(\theta_0)^{-1}$.
By \cite[Proposition 5.5]{NT:mu},
$\eta_d = \sum_{i=0}^d \eta_{d-i}(\theta_0)\tau_i $.
By these comments and Lemma \ref{lem:tauc},
\[
 e^*_0e_0e^*_0 = 
  e^*_0 \eta_d(\theta_0)^{-1}\eta^*_d(\theta^*_0)^{-1}
   \sum_{i=0}^d  \eta_{d-i}(\theta_0)\eta^*_{d-i}(\theta^*_0) \zeta_i
\]
on $M$. In the above line the sum is nonzero by \eqref{eq:ineq} so
$e^*_0e_0e^*_0$ is nonzero on $M$.
\end{proof}

\section{The $T$-module $L$}   \label{sec:mainproof}

\indent 
In this section we show that there exists a unique maximal proper $T$-submodule
of $M$. We call this module $M'$ and consider the quotient module 
${L}:=M \slash M'$.   

\medskip

\begin{lemma}    \label{lem:110}   \samepage
Let $V$ denote a proper $T$-submodule of $M$. Then $e^*_0V=0$.
\end{lemma}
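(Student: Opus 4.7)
The plan is to exploit the one-dimensionality of $e^*_0M$ established in Lemma \ref{lem:100}(ii) together with the fact that $\xi$ generates $M$ as a $T$-module (Lemma \ref{lem:100}(i)).

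First I would observe that since $V$ is a $T$-submodule of $M$, the subspace $e^*_0V$ is contained in $e^*_0M$. By Lemma \ref{lem:100}(ii), $e^*_0M = \K\xi$ is one-dimensional. Consequently $e^*_0V$ is either $0$ or all of $\K\xi$.

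Next I would argue by contradiction: suppose $e^*_0V \neq 0$. Then $e^*_0V = \K\xi$, so in particular $\xi \in V$. Since $V$ is a $T$-submodule, this forces $T\xi \subseteq V$. But $T\xi = M$ by Lemma \ref{lem:100}(i), so $V = M$, contradicting the assumption that $V$ is a proper submodule. Hence $e^*_0V = 0$, as claimed.

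There is no real obstacle here; the lemma is essentially an immediate consequence of the two parts of Lemma \ref{lem:100}, and the proof is a two-line dichotomy argument.
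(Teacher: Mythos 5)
Your proof is correct and follows exactly the paper's argument: since $e^*_0V\subseteq e^*_0M=\K\xi$ is at most one-dimensional, either $e^*_0V=0$ or $\xi\in V$, and the latter forces $V\supseteq T\xi=M$, contradicting properness. No difference in approach.
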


\begin{proof}
Suppose $e^*_0 V\not=0$. By construction $e^*_0 V \subseteq e^*_0M$ and 
$e^*_0M$ has basis $\xi$ so $\xi \in V$. The space $V$ is $T$-invariant
and $T\xi=M$ so $M=V$, for a contradiction. We conclude $e^*_0V=0$.
\end{proof}

\begin{lemma}     \label{lem:ww}   \samepage
Let $V$ and $V'$ denote proper $T$-submodules of $M$. Then $V+V'$ is a proper 
$T$-submodule of $M$.
\end{lemma}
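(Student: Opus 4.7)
The plan is to reduce the claim to Lemma \ref{lem:110} together with the one-dimensionality of $e^*_0M$ recorded in Lemma \ref{lem:100}(ii). Since $V$ and $V'$ are $T$-submodules of $M$, their sum $V+V'$ is automatically closed under the $T$-action, so the only content is that $V+V' \neq M$.

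First I would apply Lemma \ref{lem:110} to each of $V$ and $V'$ separately: since both are proper $T$-submodules of $M$, we obtain $e^*_0V = 0$ and $e^*_0V' = 0$. By linearity of multiplication by $e^*_0$, this gives $e^*_0(V+V') = e^*_0V + e^*_0V' = 0$.

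Next I would argue by contradiction: if $V+V' = M$, then $e^*_0M = e^*_0(V+V') = 0$. But Lemma \ref{lem:100}(ii) asserts that $\xi$ is a basis for $e^*_0M$, and in particular $e^*_0M \neq 0$ (since $\xi \neq 0$ as shown in the proof of that lemma). This contradiction forces $V+V' \neq M$, so $V+V'$ is a proper $T$-submodule of $M$.

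There is no real obstacle here: the whole point of Lemma \ref{lem:110} is exactly to ensure that properness of a submodule is detected by the projection $e^*_0$, and since $e^*_0M$ is one-dimensional, the set of proper submodules is automatically closed under sums. The result is essentially a formal consequence of the two preceding lemmas.
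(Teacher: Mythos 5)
Your argument is correct and is essentially identical to the paper's proof: both apply Lemma \ref{lem:110} to $V$ and $V'$ to get $e^*_0(V+V')=0$ and then contrast this with $e^*_0M\neq 0$ from Lemma \ref{lem:100}(ii). No issues.
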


\begin{proof}
We show $V+V'\not=M$. Note that $e^*_0(V+V')=e^*_0V+e^*_0V'$.
By Lemma \ref{lem:110} $e^*_0V=0$ and  $e^*_0V'=0$, so $e^*_0(V+V')=0$.
But $e^*_0M\not=0$ by Lemma \ref{lem:100}(ii), so $V+V'\not=M$. 
The result follows.
\end{proof}

\begin{definition}   \samepage
Let $V$ denote a proper $T$-submodule of $M$. Then $V$ is called {\em maximal}
whenever $V$ is not contained in any proper $T$-submodule of $M$, 
besides itself.
\end{definition}

\begin{lemma}     \label{lem:120}
There exists a unique maximal proper $T$-submodule in $M$.
\end{lemma}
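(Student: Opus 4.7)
The plan is to take $M'$ to be the sum of all proper $T$-submodules of $M$ and show, using Lemma \ref{lem:ww} together with the finite-dimensionality of $M$, that $M'$ is itself a proper $T$-submodule. Once this is established, $M'$ contains every proper $T$-submodule by construction, so it is the unique maximal one.

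In more detail, first I would let $\mathcal{F}$ denote the collection of all proper $T$-submodules of $M$ and set $M' = \sum_{V \in \mathcal{F}} V$. Clearly $M'$ is a $T$-submodule of $M$, and every proper $T$-submodule of $M$ is contained in $M'$. The only thing to check is that $M' \neq M$, for then $M' \in \mathcal{F}$ and is therefore the unique maximal proper $T$-submodule.

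The key point is that although $\mathcal{F}$ may be infinite, the sum defining $M'$ can be realized as a finite sum: since $M$ has finite dimension by Proposition \ref{prop:mfd}, we may choose finitely many $V_1,\ldots,V_n \in \mathcal{F}$ with $M' = V_1 + V_2 + \cdots + V_n$ (pick $V_i$'s whose sum has maximal dimension among finite subsums). Then by induction on $n$, applying Lemma \ref{lem:ww} at each step, $V_1 + \cdots + V_n$ is a proper $T$-submodule of $M$. Hence $M' \neq M$, as desired.

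No step looks especially hard; the only thing to be careful about is not to claim that an arbitrary union/sum of proper submodules is proper without reducing to the finite case via $\dim M < \infty$. Lemma \ref{lem:ww} only gives the two-submodule case, so the reduction to a finite sum (justified by finite-dimensionality) is the conceptual step that makes the proof work.
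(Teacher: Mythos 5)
Your proposal is correct and follows essentially the same route as the paper: the paper also takes the sum of all proper $T$-submodules, observes it is proper by Lemma \ref{lem:ww} together with the finite-dimensionality of $M$ (Proposition \ref{prop:mfd}), and concludes maximality and uniqueness. Your explicit reduction of the possibly infinite sum to a finite one of maximal dimension is just a more detailed justification of the step the paper states briefly.
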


\begin{proof}
Concerning existence, consider  
\begin{equation}       \label{eq:sumw}
\sum_{V} V,
\end{equation}
where the sum is over all proper $T$-submodules $V$ of $M$.
The space \eqref{eq:sumw} is a proper $T$-submodule of $M$ by 
Lemma \ref{lem:ww}, and since $M$ has finite dimension.
The $T$-submodule \eqref{eq:sumw} is maximal by the construction.
Concerning uniqueness, suppose $V$ and $V'$ are maximal proper $T$-submodules 
of $M$. By Lemma \ref{lem:ww} $V+V'$ is a proper $T$-submodule of $M$.
The space $V+V'$ contains each of $V$, $V'$, so $V+V'$ is equal to each of
$V$, $V'$ by the maximality of $V$ and $V'$. Therefore $V=V'$ and the result 
follows.
\end{proof}

\begin{definition}   \samepage
Let $M'$ denote the maximal proper $T$-submodule of $M$.
Let $L$ denote the quotient $T$-module $M \slash M'$.
By construction $L$ is nonzero, finite-dimensional,  and irreducible.
\end{definition}

\begin{proposition}           \label{thm:exist}   \samepage
The sequence $(a; \{e_i\}_{i=0}^d;a^*; \{e^*_i\}_{i=0}^d)$ acts on $L$ as 
a sharp tridiagonal system with parameter array
 $(\{\th_i\}_{i=0}^d; \{\th^*_i\}_{i=0}^d; \{\zeta_i\}_{i=0}^d)$.
\end{proposition}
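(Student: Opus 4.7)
The plan is to apply Proposition~\ref{prop:tds} to the $T$-module $L$, which by construction is nonzero, finite-dimensional, and irreducible. That proposition yields nonnegative integers $r,t,\delta$ with $\delta=\delta^{*}$ such that $(a;\{e_i\}_{i=t}^{t+\delta};a^{*};\{e^{*}_i\}_{i=r}^{r+\delta})$ acts on $L$ as a tridiagonal system. The remaining work is to pin down $r=t=0$ and $\delta=d$, establish sharpness, and identify the parameter array as the prescribed sequence.

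First I would verify that $e^{*}_0 L$ is one-dimensional, spanned by the image $\bar{\xi}$ of $\xi$. By Lemma~\ref{lem:110} applied to $M'$ we have $e^{*}_0 M'=0$, while Lemma~\ref{lem:100}(ii) gives $e^{*}_0 M=\K\xi$ with $\xi\neq 0$. If $\xi\in M'$ then $\xi=e^{*}_0\xi\in e^{*}_0 M'=0$, a contradiction; hence $\bar{\xi}\neq 0$ in $L$ and the natural map $e^{*}_0 M\to e^{*}_0 L$ is an isomorphism of one-dimensional spaces. This pins down $r=0$ in Lemma~\ref{lem:basic3}(i) and simultaneously gives $\dim e^{*}_0 L=1$, which is the sharpness condition.

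Next I would show $e_0 L\neq 0$ and $e_d L\neq 0$ by transporting Lemma~\ref{lem:triple} from $M$ to $L$. The proof of that lemma exhibits each of $e^{*}_0 e_0 e^{*}_0$ and $e^{*}_0 e_d e^{*}_0$ as an explicit nonzero scalar multiple of $e^{*}_0$ on $M$; applying these to $\bar{\xi}\in L$ produces nonzero multiples of $\bar{\xi}$, so $e_0\bar{\xi}\neq 0$ and $e_d\bar{\xi}\neq 0$. Combined with Lemma~\ref{lem:basic3}(ii) this forces $t=0$ and $t+\delta^{*}\geq d$, hence $\delta=\delta^{*}=d$. At this point Proposition~\ref{prop:tds} delivers a sharp tridiagonal system on $L$ of the claimed form, with the ordering of primitive idempotents running over the full range $0,\ldots,d$.

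It remains to identify the parameter array. The eigenvalue sequence is $\{\th_i\}_{i=0}^d$ and the dual eigenvalue sequence is $\{\th^{*}_i\}_{i=0}^d$ directly from the defining relations \eqref{eq:sumthiei} of $T$ together with Lemma~\ref{lem:basic1}(ii),(iv). For the split sequence, the identity of Lemma~\ref{lem:tauc} holds on $M$ and descends through the quotient map to $L$, which is precisely the formula used in Definition~\ref{def:split} to read off the split sequence as $\{\zeta_i\}_{i=0}^d$. The main obstacle is the first descent step: we must ensure that $\bar{\xi}$ does not vanish and that the scalars witnessing nonvanishing in Lemma~\ref{lem:triple} remain visible in $M/M'$. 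The key mechanism preventing any such collapse is Lemma~\ref{lem:110}, which isolates $e^{*}_0$ from every proper submodule of $M$; the rest of the argument flows directly from this observation.
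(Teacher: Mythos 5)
Your proposal is correct and follows essentially the same route as the paper: it uses Lemma \ref{lem:110} and Lemma \ref{lem:100}(ii) to get $\dim e^*_0L=1$ (hence $r=0$ and sharpness), Lemma \ref{lem:triple} to force $t=0$ and $\delta=d$ via Lemma \ref{lem:basic3} and Proposition \ref{prop:tds}, and the descent of Lemma \ref{lem:tauc} to $L$ to read off the split sequence. The only cosmetic difference is that you argue $e_0L\neq 0$ and $e_dL\neq 0$ directly by evaluating $e^*_0e_0e^*_0$ and $e^*_0e_de^*_0$ on $\bar{\xi}$, whereas the paper derives the same conclusions by contradiction from $e_iM\subseteq M'$.
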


\begin{proof}
We first show that $(a; \{e_i\}_{i=0}^d; a^*; \{e^*_i\}_{i=0}^d)$ acts on
$L$ as a tridiagonal system. This will follow from Proposition \ref{prop:tds}
once we show that the integers $r,t,\delta$ from that proposition are $0,0,d$
respectively.
By construction, for $0 \leq i \leq d$ the dimension of $e_iL $ is equal to 
the dimension of $e_iM$ minus the dimension of $e_iM'$. 
Similarly the dimension of $e^*_iL$ is equal to the dimension of $e^*_iM$ 
minus the dimension of $e^*_iM'$.
Observe $e^*_0M'=0$ by Lemma \ref{lem:110} and since $M'$ is properly 
contained in $M$.
The space $e^*_0M$ has dimension $1$ by Lemma \ref{lem:100}(ii);
therefore $e^*_0L$ has dimension $1$ so  $e^*_0L\not=0$.
Now $r=0$ in view of Lemma \ref{lem:basic3}(i).
Next we show $t=0$. Suppose $t\not=0$. Then  $e_0L=0$ by Lemma 
\ref{lem:basic3}(ii), so $e_0 M \subseteq M'$.
In this containment we apply $e^*_0$ to both sides and use $e^*_0M'=0$ to
get $e^*_0e_0M=0$. 
This implies $e^*_0e_0e^*_0M=0$ which contradicts Lemma \ref{lem:triple}.
Therefore $t=0$. 
Next we show $\delta=d$. Suppose  $\delta\not=d$. Recall $\delta=\delta^*$
by Proposition \ref{prop:tds}, so  $\delta^*\not=d$. Now $e_d L=0$ by
Lemma \ref{lem:basic3}(ii), so $e_d M \subseteq M'$.
In this containment we apply $e^*_0$ to both sides and use $e^*_0 M'=0$ to
get $e^*_0e_d M=0$. This implies  $e^*_0e_de^*_0M=0$ which contradicts
Lemma \ref{lem:triple}. Therefore $\delta=d$. We have shown 
$(r,t,\delta)=(0,0,d)$. Therefore $(a; \{e_i\}_{i=0}^d;a^*; \{e^*_i\}_{i=0}^d)$ acts on $L$ as a tridiagonal system which we denote by $\Phi$.
Observe that $\Phi$ is sharp since $e^*_0 L$  has dimension $1$.
By Lemma \ref{lem:basic1} $\Phi$ has eigenvalue sequence $\{\th_i\}_{i=0}^d$
and dual eigenvalue sequence $\{\th^*_i\}_{i=0}^d$.
By Lemma \ref{lem:tauc} and since the canonical map $M \to L$ is a $T$-module
homomorphism, we have
\[
 e^*_0\tau_i(a)e^*_0 = 
  \frac{\zeta_i e^*_0}
       {(\th^*_0-\th^*_1)(\th^*_0-\th^*_2)\cdots(\th^*_0-\th^*_i)}
     \qquad \qquad (0 \leq i \leq d)
\]
on $L$. By this and Definition \ref{def:split} the sequence 
$\{\zeta_i\}_{i=0}^d$ is the split sequence for $\Phi$.
By these comments $\Phi$ has parameter array
 $(\{\th_i\}_{i=0}^d; \{\th^*_i\}_{i=0}^d; \{\zeta_i\}_{i=0}^d)$
and the result follows.
\end{proof}

\section{The $\mu$-conjecture and the classification conjecture}\label{sec:main}

\indent
In this section we show that the $\mu$-conjecture implies Conjecture 
\ref{conj:main}.
This is our first main result.

\medskip

\begin{theorem}     \label{thm:main}   \samepage
Conjecture {\rm \ref{conj:mainp}} implies Conjecture {\rm \ref{conj:main}}.
\end{theorem}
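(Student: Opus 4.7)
The plan is to show that under Assumption \ref{assume} all three parts of Conjecture \ref{conj:main} hold, and the heavy lifting has already been arranged by the construction in Sections 7--9. First I would recall that the ``only if'' direction of Conjecture \ref{conj:main} was established in \cite[Section~8]{nomsharp}, and the uniqueness-up-to-isomorphism assertion was established in \cite[Theorem~1.6]{nomstructure}; neither of these requires the $\mu$-conjecture. Consequently, the only remaining content of Conjecture \ref{conj:main} is the ``if'' direction: given a sequence $(\{\th_i\}_{i=0}^d;\{\th^*_i\}_{i=0}^d;\{\zeta_i\}_{i=0}^d)$ of scalars satisfying (i)--(iii), we must produce a sharp tridiagonal system $\Phi$ over $\K$ with this parameter array.

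Next I would fix such a sequence and invoke Assumption \ref{assume}, which lets us work with the $\K$-algebra $T$ from Definition \ref{def:T}. The key point is that the $\mu$-conjecture is precisely what makes the ``direct sum'' statement in Proposition \ref{ex:40} available, via the identification $R = \K 1 + {\mathcal J}$ transported through the isomorphism $\mu$. This in turn legitimizes the definition of the left ideal $J$ (Definition \ref{def:30}) and guarantees that the quotient $T$-module $M = T/J$ is nonzero (Lemma \ref{lem:100}(ii)) and finite-dimensional (Proposition \ref{prop:mfd}), on which the inequality condition (ii) of Conjecture \ref{conj:main} further guarantees the non-vanishing of both $e^*_0 e_0 e^*_0$ and $e^*_0 e_d e^*_0$ (Lemma \ref{lem:triple}).

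Now I would form the unique maximal proper $T$-submodule $M'$ of $M$ furnished by Lemma \ref{lem:120} and pass to the irreducible quotient $L = M/M'$. Applying Proposition \ref{thm:exist} directly yields that the images of $(a;\{e_i\}_{i=0}^d;a^*;\{e^*_i\}_{i=0}^d)$ in $\mathrm{End}(L)$ form a sharp tridiagonal system with parameter array $(\{\th_i\}_{i=0}^d;\{\th^*_i\}_{i=0}^d;\{\zeta_i\}_{i=0}^d)$, completing the ``if'' direction. Combined with the previously known ``only if'' and uniqueness parts, Conjecture \ref{conj:main} follows in full.

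The proof at this stage is essentially a one-line assembly of results already built up in Sections 7--9, so the real obstacle was never the deduction itself but rather ensuring that each stage of the construction $J \leadsto M \leadsto L$ actually produces a nontrivial object; this is exactly what the $\mu$-conjecture was used for, through Proposition \ref{ex:40}, together with the full strength of condition (ii) (which enters only at Lemma \ref{lem:triple} to force $t=0$ and $\delta=d$ in the proof of Proposition \ref{thm:exist}). Thus the main work is bookkeeping: verifying that the cited results have indeed been proved under the weaker hypothesis set in Assumption \ref{assume} rather than under the a priori existence of a tridiagonal system (cf.\ Note \ref{note:explain}).
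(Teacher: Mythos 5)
Your proposal is correct and follows essentially the same route as the paper: the ``only if'' direction and the uniqueness assertion are delegated to previously known results, and the ``if'' direction is obtained by invoking Proposition \ref{thm:exist} for the irreducible quotient $L=M/M'$ built in Sections 7--9 under Assumption \ref{assume}. Your additional commentary on where the $\mu$-conjecture actually enters (via Proposition \ref{ex:40}) and where condition (ii) is used (Lemma \ref{lem:triple}) accurately reflects the structure of the paper's argument.
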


\begin{proof}
We assume Conjecture \ref{conj:mainp} is true and show Conjecture 
\ref{conj:main} is true.
Let the scalars \eqref{eq:parray} be given.
To prove Conjecture \ref{conj:main} in one direction, assume that there exists
a sharp tridiagonal system $\Phi$ that has parameter array \eqref{eq:parray}.
Then Conjecture \ref{conj:main}(i) holds by the construction,
Conjecture \ref{conj:main}(ii) holds by \cite[Corollary 8.3]{nomsharp},
and Conjecture \ref{conj:main}(iii) holds by \cite[Theorem 11.1]{TD00}.
To prove Conjecture \ref{conj:main} in the other direction,
assume that the scalars \eqref{eq:parray} satisfy Conjecture 
\ref{conj:main}(i)--(iii).
Then by Proposition \ref{thm:exist} there exists a sharp tridiagonal system 
over $\K$ with parameter array \eqref{eq:parray}.
By \cite[Theorem 1.6]{nomstructure} this tridiagonal system is unique up to
isomorphism of tridiagonal systems.
\end{proof}

\section{Tridiagonal pairs over an algebraically closed field}

\indent
In this section we give a variation of Conjecture  \ref{conj:main},
involving tridiagonal systems over an algebraically closed field.
We show that this variation follows from the $\mu$-conjecture.

\medskip

\begin{conjecture}     \label{conj:main2}     \samepage
Assume the field $\K$ is algebraically closed.
Let $d$ denote a nonnegative integer and let
\begin{equation}         \label{eq:parray2}
 (\{\th_i\}_{i=0}^d; \{\th^*_i\}_{i=0}^d; \{\zeta_i\}_{i=0}^d)
\end{equation}
denote a sequence of scalars taken from $\K$.
Then there exists a tridiagonal system $\Phi$ over $\K$ with parameter array 
\eqref{eq:parray2} if and only if {\rm (i)}--{\rm (iii)} hold below.
\begin{itemize}
\item[\rm (i)]
$\th_i \neq \th_j$, $\th^*_i \neq \th^*_j$ if $i \neq j$ $(0 \leq i,j \leq d)$.
\item[\rm (ii)]
$\zeta_0=1$, $\zeta_d \neq 0$, and
\[
 \sum_{i=0}^d \eta_{d-i}(\th_0)\eta^*_{d-i}(\th^*_0) \zeta_i \neq 0.
\]
\item[\rm (iii)]
The expressions
\[
  \frac{\th_{i-2}-\th_{i+1}}{\th_{i-1}-\th_i},  \qquad\qquad
  \frac{\th^*_{i-2}-\th^*_{i+1}}{\th^*_{i-1}-\th^*_i}
\]
are equal and independent of $i$ for $2 \leq i \leq d-1$.
\end{itemize}
Suppose {\rm (i)}--{\rm (iii)} hold. Then $\Phi$ is unique up to isomorphism of
tridiagonal systems.
\end{conjecture}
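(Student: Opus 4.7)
The plan is to derive Conjecture \ref{conj:main2} from Conjecture \ref{conj:main}, the latter being furnished by Theorem \ref{thm:main} once we assume the $\mu$-conjecture. The reduction rests on one simple observation: over an algebraically closed field, the sharpness hypothesis appearing in Conjecture \ref{conj:main} is automatic.

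First, I would invoke \cite[Theorem~1.3]{nomstructure}, which asserts that every tridiagonal pair over an algebraically closed field $\K$ is sharp. Consequently every tridiagonal system over such $\K$ is sharp in the sense of Definition \ref{def}, and in particular has a well-defined parameter array (since the split sequence of Definition \ref{def:split} requires sharpness). Under this observation, the existence part of Conjecture \ref{conj:main2} coincides with the existence part of Conjecture \ref{conj:main}, and can be handled by the same arguments as in Theorem \ref{thm:main}.

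Concretely, the proof proceeds in two matching directions. For the ``only if'' direction, a tridiagonal system $\Phi$ over algebraically closed $\K$ with parameter array \eqref{eq:parray2} is automatically sharp, so conditions (i)--(iii) hold by the ``only if'' part of Conjecture \ref{conj:main} (i.e.\ by \cite[Corollary~8.3]{nomsharp} and \cite[Theorem~11.1]{TD00}, exactly as in the proof of Theorem \ref{thm:main}). For the ``if'' direction, the scalars \eqref{eq:parray2} satisfying (i)--(iii), together with Conjecture \ref{conj:main} now granted via Theorem \ref{thm:main}, yield a sharp tridiagonal system over $\K$ with the prescribed parameter array; this is \emph{a fortiori} a tridiagonal system. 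The uniqueness up to isomorphism transfers in the same way: two such systems are both forced to be sharp by \cite[Theorem~1.3]{nomstructure}, whereupon \cite[Theorem~1.6]{nomstructure} supplies the isomorphism.

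The hard part: there is essentially nothing hard here; the only content beyond the proof of Theorem \ref{thm:main} is the appeal to \cite[Theorem~1.3]{nomstructure}. The sole subtlety to be mindful of is that the notion of parameter array presupposes sharpness, so the whole reduction hinges on \cite[Theorem~1.3]{nomstructure} guaranteeing sharpness without further assumption in the algebraically closed setting.
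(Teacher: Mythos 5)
Your proposal is correct and follows exactly the paper's route: the paper's proof of Theorem \ref{thm:main2} likewise cites \cite[Theorem~1.3]{nomstructure} to reduce the algebraically closed case to the sharp case and then invokes Theorem \ref{thm:main}. Your write-up merely spells out the two directions and the uniqueness transfer in more detail than the paper's two-line argument.
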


\begin{theorem}  \label{thm:main2}  \samepage
Conjecture {\rm \ref{conj:mainp}} implies Conjecture {\rm \ref{conj:main2}}.
\end{theorem}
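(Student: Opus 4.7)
The plan is to deduce Conjecture \ref{conj:main2} from the already-proved Theorem \ref{thm:main} by invoking the fact, recalled in the introduction as \cite[Theorem~1.3]{nomstructure}, that every tridiagonal pair over an algebraically closed field is sharp. Given this input, Conjecture \ref{conj:main2} differs from Conjecture \ref{conj:main} only by the omission of the word ``sharp,'' and in the algebraically closed setting that word is vacuous.

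More precisely, I would first assume the $\mu$-conjecture. Then Theorem \ref{thm:main} tells us that Conjecture \ref{conj:main} holds: for any sequence of scalars satisfying (i)--(iii), there is a unique (up to isomorphism) sharp tridiagonal system over $\K$ with that parameter array, and conversely every sharp tridiagonal system has a parameter array satisfying (i)--(iii).

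To upgrade this to Conjecture \ref{conj:main2}, I would specialize to $\K$ algebraically closed. By \cite[Theorem~1.3]{nomstructure} every tridiagonal pair over such $\K$ is sharp, so the parameter array (Definition \ref{def:split} together with the definition of parameter array) is defined for every tridiagonal system over $\K$, and the hypothesis ``sharp'' in Conjecture \ref{conj:main} may be dropped without altering the content. The forward implication, reverse implication, and uniqueness clauses of Conjecture \ref{conj:main2} then follow immediately from the corresponding clauses of Conjecture \ref{conj:main}: given scalars satisfying (i)--(iii), the sharp tridiagonal system produced by Conjecture \ref{conj:main} is in particular a tridiagonal system; conversely any tridiagonal system over $\K$ with the given parameter array is automatically sharp, hence satisfies (i)--(iii) and is unique up to isomorphism.

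There is no substantive obstacle; the entire argument reduces to noticing that the sharpness hypothesis in Conjecture \ref{conj:main} is automatic over algebraically closed fields, so Theorem \ref{thm:main} and \cite[Theorem~1.3]{nomstructure} together yield Theorem \ref{thm:main2} at once.
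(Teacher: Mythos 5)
Your proposal is correct and matches the paper's own argument exactly: the paper also deduces Theorem \ref{thm:main2} from Theorem \ref{thm:main} together with \cite[Theorem~1.3]{nomstructure}, which makes the sharpness hypothesis vacuous over an algebraically closed field. You have simply spelled out in more detail the same two-line reduction the paper gives.
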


\begin{proof}
By \cite[Theorem 1.3]{nomstructure} every tridiagonal system over an 
algebraically closed field is sharp.
The result follows from this and Theorem \ref{thm:main}.
\end{proof}

\section{The $\mu$-conjecture is true for $d \leq 5$}

\indent
In this section we show that the $\mu$-conjecture holds for $d\leq 5$. 
This is our second main result.

\medskip

\begin{theorem}         \label{thm:mainpd5}
Conjecture {\rm \ref{conj:mainp}} is true for $d \leq 5$.
\end{theorem}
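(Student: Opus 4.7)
Since $\mu: R = \K[x_1, \ldots, x_d] \to e^*_0 T e^*_0$ is already surjective by Corollary \ref{cor:18a}, the task for $d \leq 5$ is to prove injectivity. By Lemma \ref{lem:17}, the commutative algebra $e^*_0 T e^*_0$ is generated by the $d$ elements $\mu(x_i) = e^*_0 \tau_i(a) e^*_0$, so injectivity of $\mu$ amounts to algebraic independence of these generators over $\K$. A routine base-change argument (using that the formation of $T$, $R$, and $\mu$ commutes with field extension) lets me assume $\K$ is algebraically closed, hence infinite.

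I would detect algebraic independence by evaluating on a large enough family of finite-dimensional irreducible $T$-modules. Suppose that for each $(\zeta_1, \ldots, \zeta_d)$ in a Zariski-dense subset of $\K^d$ one can produce a sharp tridiagonal system $\Phi_\zeta$ with eigenvalue sequences $\{\theta_i\}_{i=0}^d$, $\{\theta^*_i\}_{i=0}^d$ (those built into $T$) and split sequence $(1, \zeta_1, \ldots, \zeta_d)$. Then Proposition \ref{prop:goback} (with $(r,t,\delta)=(0,0,d)$) makes the underlying vector space an irreducible $T$-module $V_\zeta$ on which $e^*_0 V_\zeta$ is one-dimensional, and by Definition \ref{def:split} the generator $\mu(x_i)$ acts on this line as the scalar $c_i^{-1}\zeta_i$, where $c_i = \prod_{j=1}^i (\theta^*_0 - \theta^*_j)$. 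This yields a character $\chi_\zeta: e^*_0 T e^*_0 \to \K$, and $\chi_\zeta \circ \mu$ is evaluation at $(c_i^{-1}\zeta_i)_{i=1}^d$. Any $p \in \ker \mu$ would then vanish on a Zariski-dense subset of $\K^d$ and hence be zero.

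The principal obstacle, and the origin of the bound $d \leq 5$, is producing these sharp tridiagonal systems, that is, verifying the existence half of Conjecture \ref{conj:main} for small $d$. The Leonard case (shape $(1,1,\ldots,1)$) is covered by \cite{LS99,TLT:array}, and shape $(1,2,1)$ by \cite{Vidar}; the remaining admissible shapes for $d \in \{3,4,5\}$ require explicit matrix constructions compatible with the constraints of Conjecture \ref{conj:main}(ii),(iii), whose combinatorial complexity grows sharply with $d$. An equally viable alternative is a direct computation internal to $T$: by Lemma \ref{lem:convex} applied with $r=s=0$, any spanning word of $e^*_0 T e^*_0$ has the form $e^*_0 D e^*_{k_1} D \cdots D e^*_{k_n} D e^*_0$ with $(0,k_1,\ldots,k_n,0)$ convex and non-negative, and such a sequence is necessarily identically zero; all words therefore reduce to products of the form $e^*_0 a^{m_1} e^*_0 a^{m_2} e^*_0 \cdots e^*_0$. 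For $d \leq 5$ one may then verify by an explicit (but tedious) graded-dimension count, using the relation $\prod_{i=0}^d(a-\theta_i 1)=0$ and the eigenvalue constraint in Conjecture \ref{conj:main}(iii), that this span is freely generated as a commutative $\K$-algebra by $\mu(x_1),\ldots,\mu(x_d)$.
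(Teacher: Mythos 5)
There is a genuine gap. You correctly reduce the problem to showing that the generators $e^*_0\tau_i(a)e^*_0$ are algebraically independent, and your instinct to detect independence by acting on modules is the right one, but neither of your two routes actually closes the argument. Your first route requires, for a Zariski-dense set of $(\zeta_1,\ldots,\zeta_d)$, a sharp tridiagonal system with the given eigenvalue arrays and split sequence $(1,\zeta_1,\ldots,\zeta_d)$. That is precisely the existence half of Conjecture \ref{conj:main}, which is what the $\mu$-conjecture is being introduced to prove (Theorem \ref{thm:main}); invoking it here is circular unless you supply an independent construction, and you do not. The citations you offer do not help: Leonard systems and shape $(1,2,1)$ pairs occupy proper subvarieties of the parameter space (for a fixed eigenvalue array the admissible split sequences of a Leonard system satisfy extra polynomial constraints), so they cannot furnish a Zariski-dense family of $\zeta$'s for $d\geq 3$. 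You explicitly defer the ``remaining admissible shapes'' to unspecified ``explicit matrix constructions,'' which is exactly the content that a proof must contain. Your second route fails for a structural reason: a ``graded-dimension count'' using the defining relations of $T$ can only bound $\dim e^*_0Te^*_0$ from above, whereas injectivity of $\mu$ is a lower bound on dimension. Lower bounds for an algebra presented by generators and relations can only come from exhibiting representations, so this route collapses back into the first one. (Your observation that a nonnegative convex sequence $(0,k_1,\ldots,k_n,0)$ must vanish, so that $e^*_0Te^*_0$ is spanned by words $e^*_0a^{m_1}e^*_0\cdots e^*_0$, is correct, but it only reproves Lemma \ref{lem:17}.)

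The idea you are missing is the one the paper uses: instead of a Zariski-dense family of $\K$-point modules, construct a single ``generic'' module. The Appendix defines, for each $d\leq 5$, an explicit $T$-module structure on a free $R$-module $V$ of finite rank (where $R=\K[x_1,\ldots,x_d]$), with $a,a^*$ acting $R$-linearly on a listed basis; the relations \eqref{eq:eiej}--\eqref{eq:eiaskej} are verified by direct computation. This module contains a basis vector $\phi$ with $e^*_0.\phi=\phi$ and
\[
e^*_0\tau_i(a)e^*_0.\phi=\frac{y_i}{(\th^*_0-\th^*_1)\cdots(\th^*_0-\th^*_i)}\,\phi
=x_i\phi \qquad (1\leq i\leq d),
\]
whence $\mu(f).\phi=f\phi$ for every $f\in R$. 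Since $V$ is free over $R$, $f\phi\neq 0$ for $f\neq 0$, and injectivity of $\mu$ follows at once, with no base change and no appeal to the classification. The bound $d\leq 5$ reflects only how far the explicit module data has been computed, not a failure of any shape classification. Without producing such a module (or the dense family of tridiagonal systems), your argument does not establish Theorem \ref{thm:mainpd5}.
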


\begin{proof}
Let $d$ be given. Referring to the Appendix, let $V$ denote the $R$-module
consisting of formal $R$-linear combinations of the given basis.
We define a $T$-module structure on $V$ as follows.
An $\K$-linear transformation $\psi:V\to V$ is said to {\em commute} with $R$
whenever $\psi r-r\psi$ is zero on $V$ for all $r \in R$. 
Let $a:V \to V$ and $a^*:V \to V$ denote the unique $\K$-linear
transformations that commute with $R$ and act in the specified way on the basis.
For $0 \leq i \leq d$  define $\K$-linear transformations 
$e_i:V\to V$ and $e^*_i : V\to V$ such that \eqref{eq:eiesi} holds.
By a laborious computation (or with the aid of Mathematica) one can check
that relations \eqref{eq:eiej}--\eqref{eq:eiaskej} hold on $V$.
This gives a $T$-module structure on $V$.
Among the basis elements in the Appendix there is one denoted
$\phi$.
For $1 \leq i \leq d$ we show
\begin{equation}    \label{eq:es0tauiaes0}
  e^*_0\tau_i(a)e^*_0.\phi 
  = \frac{y_i}{(\th^*_0-\th^*_1)(\th^*_0-\th^*_1)\cdots(\th^*_0-\th^*_i)} \phi.
\end{equation}
Assume $d\geq 1$; otherwise there is nothing to show.
From the Appendix $a^*.\phi = \th^*_0 \phi$. 
By this and since $e^*_0 =\eta^*_d(a^*)\eta^*_d(\th^*_0)^{-1}$
we find $e^*_0.\phi = \phi$. 
Among the basis elements in the Appendix, consider the following
elements:
\[
  \phi \qquad r \qquad r^2 \qquad \cdots \qquad r^i \qquad 
  lr^i \qquad l^2r^i \qquad \cdots \qquad l^{i-1}r^i
\]
Abbreviate $r^0=\phi$.
By the data in the Appendix,
$(a-\th_h).r^h = r^{h+1}$ for $0 \leq h \leq i-1$
and
$(a^*-\th^*_{i-h}).l^h r^i = l^{h+1}r^i$  for $0 \leq h \leq i-2$.
Moreover
$(a^*-\th^*_1).l^{i-1}r^i = y_i \phi$.
Therefore 
\[
 (a^*-\th^*_1)(a^*-\th^*_2) \cdots (a^*-\th^*_i)\tau_i(a).\phi = y_i\phi.
\]
In this equation we multiply both sides on the left by $e^*_0$
and simplify the result using the equation on the right in \eqref{eq:aei}.
Evaluating the result further using $\phi = e^*_0.\phi$
yields \eqref{eq:es0tauiaes0}.
By \eqref{eq:defyi}, \eqref{eq:es0tauiaes0}, and Corollary \ref{cor:18a},
\[
  \mu(x_i).\phi=x_i \phi  \qquad\qquad(1 \leq i \leq d).
\]
By this and since $\mu$ is an $\K$-algebra homomorphism,
for $f \in R$ we have $\mu(f).\phi = f \phi$. 
By construction $f\phi \not=0$ if $f\not=0$; therefore $\mu$ in injective
and hence an isomorphism. The result follows.
\end{proof}

\begin{corollary}
Conjectures {\rm \ref{conj:main}} and {\rm \ref{conj:main2}}
are true for $d \leq 5$.
\end{corollary}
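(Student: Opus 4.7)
The plan is to derive this corollary as an immediate consequence of the three main results already established, with the only subtlety being a dimension-by-dimension reading of the implications in Theorems \ref{thm:main} and \ref{thm:main2}.

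First I would observe that Theorem \ref{thm:mainpd5} asserts Conjecture \ref{conj:mainp} for every $d \leq 5$, regardless of the chosen eigenvalue and dual eigenvalue sequences. Next I would invoke Theorem \ref{thm:main} to conclude Conjecture \ref{conj:main} for these values of $d$, and then Theorem \ref{thm:main2} to conclude Conjecture \ref{conj:main2} for these values of $d$. The point to verify is that although Theorems \ref{thm:main} and \ref{thm:main2} are phrased as implications between the two conjectures as global statements, their proofs proceed dimension by dimension: Assumption \ref{assume} fixes a nonnegative integer $d$ together with a sequence of scalars, and the entire construction of the algebra $T$, the left ideal $J$, the module $M$, and finally the sharp tridiagonal system $L$ depends only on the $\mu$-conjecture for that specific $d$. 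Thus when $d \leq 5$, where the $\mu$-conjecture is known by Theorem \ref{thm:mainpd5}, the proofs of Theorems \ref{thm:main} and \ref{thm:main2} go through verbatim for that $d$.

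The main step is therefore just to chain the implications
\[
\text{Thm \ref{thm:mainpd5}} \;\Longrightarrow\; \text{Conj \ref{conj:mainp} for } d\leq 5
\;\stackrel{\text{Thm \ref{thm:main}, \ref{thm:main2}}}{\Longrightarrow}\;
\text{Conj \ref{conj:main} and \ref{conj:main2} for } d\leq 5,
\]
and this can be written in one or two sentences.

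There is no genuine obstacle: the only thing one might worry about is whether the proof of Theorem \ref{thm:main} implicitly requires the $\mu$-conjecture for diameters other than the one at hand, but inspection of Sections 7--10 confirms that every invocation of the $\mu$-conjecture takes place inside the fixed algebra $T$ associated with the single chosen $d$, so the restricted version of Conjecture \ref{conj:mainp} is enough.
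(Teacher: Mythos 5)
Your proposal is correct and matches the paper's proof, which simply cites Theorems \ref{thm:main}, \ref{thm:main2}, and \ref{thm:mainpd5}; your extra observation that the implications hold dimension-by-dimension is a valid (if unstated in the paper) point of care.
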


\begin{proof}
Follows from Theorems \ref{thm:main}, \ref{thm:main2}, and \ref{thm:mainpd5}.
\end{proof}

\section{Suggestions for future research}

\indent
In this section we give some suggestions for future research.

\medskip

In what follows let $(\{\th_i\}_{i=0}^d; \{\th^*_i\}_{i=0}^d)$
denote a sequence of scalars taken from $\K$ that satisfy the conditions
{\rm (i)}, {\rm (iii)} of Conjecture \ref{conj:main}. 
Let $T$ denote the corresponding algebra from Definition \ref{def:T}.
We are going to describe a subset of $T$ that we think is a basis.
To aid in this description we make a few definitions.

\medskip

\begin{definition}   \samepage
Referring to  Definition \ref{def:T}, we call $\{e_i\}_{i=0}^d$ and
$\{e^*_i\}_{i=0}^d$ the {\em standard generators} for $T$.
We call $\{e^*_i\}_{i=0}^d$ {\em starred} and $\{e_i\}_{i=0}^d$  
{\em nonstarred}.
A pair of standard generators is {\em alternating} whenever one of them is 
starred and the other is nonstarred.
For $0 \leq i \leq d$ we call $i$ the {\em index} of $e_i$ and $e^*_i$.
\end{definition}

\begin{definition}   \samepage
For an integer $n\geq 0$, by a {\it word of length $n$} in $T$ we mean a 
product $u_1u_2 \cdots u_n$ such that $\{u_i\}_{i=1}^n$ are standard 
generators and $u_{i-1}, u_i$ are alternating for $2 \leq i \leq n$.
We interpret the word of length $0$ as the identity element
of $T$. We call this word {\em trivial}.
\end{definition}

\begin{definition}   \samepage
For $0 \leq i,j,r\leq d$ we say $r$ is {\em between} the ordered pair $i,j$ 
whenever $i\geq r>j$ or $i\leq r<j$.
For standard generators $u,v,w$ we say $u$ is {\em between} the ordered pair 
$v,w$ whenever the index of $u$ is between the indices of $v,w$.
\end{definition}

\begin{definition}   \samepage
A word $u_1u_2 \cdots u_n$  in $T$ is called {\em zigzag} (or {\em ZZ} ) 
whenever {\rm (i)}, {\rm (ii)} hold below:
\begin{itemize}
\item[\rm (i)]
$u_i$ is not between $u_{i-1},u_{i+1}$ for $2 \leq i \leq n-1$; 
\item[\rm (ii)]
At least one of $u_{i-1}, u_i$ is not between $u_{i-2}, u_{i+1}$ for $3 \leq i \leq n-1$.
\end{itemize}
\end{definition}

\begin{conjecture}   \samepage
Fix integers $r,s$ $(0 \leq r,s\leq d)$. Then the $\K$-vector space $T$ has a 
basis consisting of the $ZZ$ words that do not involve $e_r,e^*_s$.
\end{conjecture}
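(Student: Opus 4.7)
The plan is to prove this conjecture in two stages: first, that the ZZ words avoiding $e_r$ and $e^*_s$ span $T$ over $\K$; second, that they are linearly independent.

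For the spanning stage, I would set up a normal-form rewriting procedure. By Lemma \ref{lem:injection}, the algebra $T$ is generated by $\{e_i\}_{i=0}^d\cup\{e^*_i\}_{i=0}^d$, so every element of $T$ is a $\K$-linear combination of words; after collapsing same-type adjacent pairs via $e_ie_j=\delta_{i,j}e_i$, we may assume each word is alternating. To impose ZZ condition (i), expand the tridiagonal relation $e^*_i a^k e^*_j = 0$ for $k<|i-j|$ using $a^k = \sum_h \theta_h^k e_h$: by a Vandermonde argument, and because there are exactly $|p-q|$ indices $h$ between $p$ and $q$, this yields for each such $h$ a $\K$-linear expression for $e^*_p e_h e^*_q$ as a combination of $\{e^*_p e_{h'} e^*_q : h' \text{ not between } p,q\}$; the dual move for indices of $e$-generators comes from \eqref{eq:eiaskej}. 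Applied within a longer word, these moves eliminate each occurrence violating condition (i). An analogous expansion over windows of length four, in the spirit of Lemma \ref{lem:convex}, should enforce condition (ii). Termination should follow from a well-chosen monovariant on the index sequence. Finally, $e_r$ and $e^*_s$ are eliminated by substituting $e_r = 1 - \sum_{i\neq r} e_i$ and $e^*_s = 1 - \sum_{i\neq s} e^*_i$ from \eqref{eq:sumei} and reapplying the preceding reductions to the resulting terms.

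For linear independence, the plan is to construct a faithful action of $T$ on a $\K$-vector space whose dimension matches the number of proposed basis words. Proposition \ref{prop:goback} produces finite-dimensional irreducible $T$-modules from arbitrary tridiagonal systems with eigenvalue data drawn from $\{\theta_i\}_{i=0}^d$ and $\{\theta^*_i\}_{i=0}^d$ at all admissible offsets $r,t$ and subdiameters $\delta$; granting the classification in Conjecture \ref{conj:main} (equivalently, the $\mu$-conjecture), one can take a direct sum over a sufficiently rich family of parameter arrays and detect every proposed basis element on some summand. An alternative, more concrete route is to extend the appendix construction of Theorem \ref{thm:mainpd5} to arbitrary $d$, producing a representation whose distinguished basis is indexed precisely by the ZZ words avoiding $e_r$ and $e^*_s$.

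The main obstacle is linear independence. Since the number of ZZ words avoiding $e_r$ and $e^*_s$ grows rapidly with $d$, and we have no independent handle on $\dim T$, the cleanest route would be to verify confluence of the rewriting system in the spanning stage via a Bergman-style diamond lemma argument, which would yield linear independence automatically; however, the admissible moves interact in intricate ways over long windows, and checking all critical pairs appears to be a substantial combinatorial problem. Failing confluence, the representation-theoretic approach requires structural input about $T$ of essentially the same strength as the $\mu$-conjecture itself, which is why I would expect a complete proof to proceed in tandem with further progress on Conjecture \ref{conj:mainp} rather than independently of it.
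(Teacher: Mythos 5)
The first thing to note is that the paper offers no proof of this statement: it appears in the ``Suggestions for future research'' section as an open conjecture, so there is no argument of the authors to compare yours against. Judged on its own terms, your proposal is a reasonable strategy outline but not a proof, and you have correctly located where it breaks down. For the spanning half, reducing a window $e^*_p e_h e^*_q$ with $h$ between $p,q$ by a Vandermonde expansion of the relations $e^*_p a^k e^*_q=0$, $k<|p-q|$, is indeed the mechanism behind Lemma \ref{cor:EsrDDsDDss} and Lemma \ref{lem:convex}, so that step is sound in spirit; but you never exhibit the monovariant that guarantees termination (the weight-minimality argument in Lemma \ref{lem:convex} controls convexity between two fixed endpoints, not the ZZ conditions on arbitrary words), the enforcement of ZZ condition (ii), which involves windows of four generators, is only asserted to ``should'' follow, and the closing substitution $e_r=1-\sum_{i\neq r}e_i$ destroys the alternating form and can reintroduce violations of (i) and (ii), so the combined procedure needs its own termination proof.

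For linear independence the gap is decisive as matters stand. The diamond-lemma confluence check is proposed but not carried out, and you acknowledge that the critical-pair analysis is a substantial open combinatorial problem. The representation-theoretic alternative explicitly assumes Conjecture \ref{conj:main} (equivalently the $\mu$-conjecture, Conjecture \ref{conj:mainp}), which the paper establishes only for $d\le 5$ (Theorem \ref{thm:mainpd5}); and even granting a rich supply of irreducible $T$-modules via Proposition \ref{prop:goback}, you would still need a dimension count matching the number of ZZ words avoiding $e_r,e^*_s$ against $\dim T$ --- precisely the quantity you admit to having no independent handle on. So the proposal identifies the right difficulties without resolving them, which is consistent with the authors' decision to leave the statement as a conjecture rather than a theorem.
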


\begin{conjecture}   \samepage
The $\K$-vector space $Te^*_0$ has a basis consisting of the nontrivial $ZZ$ 
words that end in $e^*_0$ and do not involve $e_0,e^*_d$.
\end{conjecture}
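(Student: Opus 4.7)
The plan is to prove spanning and linear independence separately, since they rest on quite different techniques.

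For spanning, I would first observe that $T$ is generated as a $\K$-algebra by the standard generators $\{e_i\}_{i=0}^d \cup \{e^*_i\}_{i=0}^d$, since \eqref{eq:sumthiei} recovers $a$ and $a^*$ from them. Hence every element of $Te^*_0$ is a $\K$-linear combination of products of standard generators ending in $e^*_0$, and the idempotent relations \eqref{eq:eiej} let me reduce to genuine words in the sense of the paper. The completeness identities \eqref{eq:sumei} then let me substitute $e_0 = 1-\sum_{i>0}e_i$ and $e^*_d = 1-\sum_{i<d}e^*_i$ and re-alternate letters, producing a spanning set by words that end in $e^*_0$ and avoid $e_0$ and $e^*_d$.

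To enforce the zigzag conditions I would use \eqref{eq:esiakesj} and \eqref{eq:eiaskej} as rewriting rules. Expanded through $a^k = \sum_h \th_h^k e_h$, relation \eqref{eq:esiakesj} yields exactly $|r-s|$ linear dependencies among the $(d+1)$ triples $\{e^*_r e_h e^*_s\}_{h=0}^d$, and by Vandermonde these can be solved to eliminate precisely those $e^*_r e_h e^*_s$ with $h$ between the ordered pair $(r,s)$; an identical argument using \eqref{eq:eiaskej} handles triples $e_r e^*_h e_s$. This takes care of Condition (i) of the ZZ definition. Condition (ii), which is a four-letter restriction, should follow from combining two overlapping instances of the same reduction, in the spirit of the proof of Lemma \ref{lem:convex} where the convex-spanning result is deduced from iterated applications of Lemma \ref{cor:EsrDDsDDss}.

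Linear independence is the main obstacle. Since $T$ is defined purely by generators and relations, nothing a priori bounds $\dim Te^*_0$ or excludes hidden relations among ZZ words, so the argument must proceed by exhibiting a large enough family of $T$-modules. My proposal is to build a \emph{generic} module by adjoining indeterminates $\{z_i\}_{i=1}^d$ to $\K$ and constructing a $T$-module on a combinatorial basis modelled on the Appendix used to prove Theorem \ref{thm:mainpd5}, extended to arbitrary $d$. If the ZZ words act on the analogue of the distinguished vector $\phi$ to produce elements that are linearly independent over $\K[z_1,\ldots,z_d]$, then linear independence of the ZZ words in $Te^*_0$ follows. The hard step is identifying the right combinatorial basis in arbitrary $d$ and verifying that \eqref{eq:esiakesj}, \eqref{eq:eiaskej} are respected; a more conceptual alternative is a Bergman diamond-lemma argument, treating the reductions of the previous paragraph as rewriting rules with ZZ words as normal forms and checking that all critical overlaps resolve confluently. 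Either route appears demanding, which is consistent with the statement appearing among the paper's suggestions for future research.
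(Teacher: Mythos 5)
This statement is one of the open conjectures in the ``Suggestions for future research'' section; the paper offers no proof of it, so there is no argument of the authors' to compare yours against. What you have written is a research plan rather than a proof, and as it stands it does not establish the conjecture. The decisive gap is linear independence, which you yourself flag as the main obstacle and then leave entirely open: you name two possible routes (a generic $T$-module on a combinatorial basis generalizing the Appendix to arbitrary $d$, or a diamond-lemma confluence check), but you carry out neither, and each is precisely where the difficulty lives. The Appendix data exist only for $d\leq 5$ and were produced by what the paper calls a laborious computation; no candidate basis or action is proposed for general $d$, and no argument is given that the ZZ words act on the distinguished vector to give independent elements. Likewise, for the diamond-lemma route the resolution of overlapping ambiguities is exactly the content of the conjecture, not a routine verification.

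The spanning half is closer to complete but still has real gaps. First, the Vandermonde elimination derived from \eqref{eq:esiakesj} and \eqref{eq:eiaskej} lets you rewrite a triple $e^*_r e_h e^*_s$ with $h$ between the ordered pair $r,s$ in terms of triples whose middle index is not between $r,s$; but performing this substitution at one position can destroy condition (i) at the adjacent positions, so you need a termination argument (a strictly decreasing weight, as in the proof of Lemma \ref{lem:convex}) to conclude that iterated rewriting stabilizes on ZZ words. You do not supply one, and it is not obvious what weight works once the eliminations at overlapping positions interact. Second, condition (ii) of the ZZ definition is a genuine four-letter constraint, and the assertion that it ``should follow from combining two overlapping instances of the same reduction'' is not an argument; note that Lemma \ref{cor:EsrDDsDDss} and Lemma \ref{lem:convex} only control three-letter patterns (convexity), and nothing in the paper produces a relation of the shape needed for condition (ii). Finally, the elimination of $e_0$ and $e^*_d$ via \eqref{eq:sumei} must be interleaved with the ZZ reductions, since each can reintroduce violations handled by the other; this also needs a termination argument. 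None of these objections shows the conjecture is false, but each marks a step that is asserted rather than proved.
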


\medskip

Before we state the next conjecture, we have some comments concerning $Te^*_0$
and the algebra $R$ from Definition \ref{def:18}.
Note that $Te^*_0$ has a (right) $R$-module structure such that $v.r=v \mu(r)$ 
for all $v \in Te^*_0$ and $r \in R$ (the map $\mu$ is from 
Corollary \ref{cor:18a}).
Let $\mbox{\rm End}(Te^*_0)$ denote the $\K$-algebra consisting of all 
$\K$-linear transformations from $Te^*_0$ to $Te^*_0$.
An element $f \in \text{End}(Te^*_0)$ is said to {\it commute with $R$} whenever
$f(v.r)= f(v).r $ for all $v \in Te^*_0$ and all $r \in R$.
Let $\text{End}_R(Te^*_0)$ denote the subalgebra of $\text{End}(Te^*_0)$ 
consisting of the elements which commute with $R$.
Note that $Te^*_0$ has a  (left) $T$-module structure such that 
$t.v\mapsto tv$ for all $t \in T$ and $v \in Te^*_0$.
Observe  that the action of $T$ on $Te^*_0$ commutes with $R$ and therefore 
induces an $\K$-algebra homomorphism $T \to \text{End}_R(Te^*_0)$.

\medskip

\begin{conjecture}   \samepage
The above  map $T \to \text{\rm End}_R(Te^*_0)$ is an injection.
\end{conjecture}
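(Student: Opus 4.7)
\medskip
\textbf{Proof proposal.}
The map $T \to \text{End}_R(Te^*_0)$ has kernel equal to the left annihilator in $T$ of the cyclic left $T$-module $Te^*_0$. This annihilator is automatically a two-sided ideal of $T$, in fact the largest two-sided ideal contained in the left ideal $T(1-e^*_0) = \{t \in T : te^*_0 = 0\}$. So the task reduces to showing that $Te^*_0$ is a faithful left $T$-module.

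My plan would be to produce enough finite-dimensional quotients of $Te^*_0$ so that their joint annihilator in $T$ is zero. Under Assumption \ref{assume}, Proposition \ref{thm:exist} produces, for each parameter array $(\{\th_i\}_{i=0}^d; \{\th^*_i\}_{i=0}^d; \{\zeta_i\}_{i=0}^d)$ satisfying conditions (i)--(iii) of Conjecture \ref{conj:main}, an irreducible, finite-dimensional $T$-module $L(\zeta)$ on which $e^*_0$ acts nontrivially. Through the cyclic generator $\xi = 1+J$ of $M=T/J$, each $L(\zeta)$ is a quotient of $Te^*_0$ as a left $T$-module. Hence any element of $T$ annihilating $Te^*_0$ must annihilate every $L(\zeta)$, and it suffices to establish $\bigcap_{\zeta}\text{Ann}_T(L(\zeta))=0$ as $\zeta$ ranges over valid split sequences.

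The concrete approach I would pursue is to mimic the proof of Theorem \ref{thm:mainpd5}: construct, for arbitrary $d$, a free $R$-module $V$ equipped with a compatible $T$-action whose structure constants on a distinguished $R$-basis lie in $R = \K[x_1,\ldots,x_d]$, and such that specialization $x_i \mapsto \zeta_i$ recovers $L(\zeta)$. If a nonzero $t \in T$ annihilated every $L(\zeta)$, the matrix coefficients of $t$ on $V$---which are elements of $R$---would vanish at every valid $\zeta$ (a Zariski-dense set after passing to an algebraic closure of $\K$), hence would vanish identically, forcing $t=0$.

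The main obstacle is the explicit construction of $V$ in general $d$. For $d\leq 5$ the paper relies on a machine-assisted verification of the relations \eqref{eq:eiej}--\eqref{eq:eiaskej} on an ad hoc spanning set listed in an appendix. To proceed uniformly I expect one needs the combinatorial framework supplied by the $ZZ$-word conjectures in the preceding section: those conjectural bases provide a natural candidate $R$-basis for $Te^*_0$, and the standard generators of $T$ should act on it by explicit formulas compatible with $R$-linearity. Proving the $ZZ$-basis conjectures---in particular that the nontrivial $ZZ$ words ending in $e^*_0$ and avoiding $e_0,e^*_d$ form a basis of $Te^*_0$, and organizing these into a free $R$-basis---thus appears to be the crucial intermediate step.
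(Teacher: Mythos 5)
This statement is one of the open conjectures in the paper's closing section on future research; the paper offers no proof of it, so there is nothing to compare your argument against. What you have submitted is a research programme rather than a proof, and you are candid about that. Your opening reduction is correct: the kernel of $T \to \text{End}_R(Te^*_0)$ is the annihilator of the cyclic left $T$-module $Te^*_0$, which is the largest two-sided ideal contained in $T(1-e^*_0)=\{t \in T : te^*_0=0\}$, so the conjecture is equivalent to faithfulness of $Te^*_0$. It is also true that each module $L$ produced by Proposition \ref{thm:exist} is a quotient of $Te^*_0$, so the annihilator of $Te^*_0$ lies in the joint annihilator of these modules.

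The genuine gaps are these. First, Proposition \ref{thm:exist} is proved only under Assumption \ref{assume}, i.e.\ assuming the $\mu$-conjecture (Conjecture \ref{conj:mainp}), which is itself open for $d\geq 6$; the injectivity conjecture of the final section is posed without that hypothesis, so at best your route yields a conditional statement. Second, the generic free $R$-module $V$ whose specializations at $x_i \mapsto \zeta_i$ recover the irreducible modules exists in the paper only for $d \leq 5$, via an explicit basis and a machine verification of the relations \eqref{eq:eiej}--\eqref{eq:eiaskej}; constructing it for general $d$ is essentially the content of the $ZZ$-basis conjectures (in particular Conjecture \ref{lem:25}), so the plan reduces one open conjecture to others rather than closing any. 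Third, and most seriously as a matter of logic, your final inference --- that identical vanishing of the matrix coefficients of $t$ on $V$ forces $t=0$ --- presupposes that $T$ acts faithfully on $V$. But if $V$ is an $R$-form of $Te^*_0$, which is exactly what your construction aims to produce, then faithfulness of $V$ \emph{is} the conjecture being proved, and the argument is circular at its last step. You would need an independent source of faithfulness, for instance the conjectured $ZZ$-word basis of $T$ itself together with action formulas on $V$ that visibly separate those basis elements.
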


\begin{definition} \label{def:basis}   \samepage
\rm Let $W$ denote a right $R$-module. By an {\em $R$-basis} for $W$ we mean
a sequence $\{z_i\}_{i=1}^n$ of elements in $W$ such that each element of $W$ 
can be written uniquely as $\sum_{i=1}^n z_i.r_i$ with $r_i \in R$ for 
$1 \leq i \leq n$.
\end{definition}

\begin{definition}  \cite[Section 7.4]{Rot}   \samepage
Let $W$ denote a right $R$-module. Then $W$  is called {\em free} whenever $W$ 
has at least one $R$-basis.
In this case the number of elements in a basis is independent of the $R$-basis.
This number is called the {\em rank} of $W$.
\end{definition}

\medskip

We are going to describe a subset of $Te^*_0$ which we think is an $R$-basis. 
To describe the subset we will use the following notation.

\begin{definition}   \samepage
By a {\it feasible} $ZZ$ word in $T$ we mean a nontrivial $ZZ$ word that ends 
in $e^*_0$ and whose indices are mutually distinct.
\end{definition}

\begin{example}   \samepage
For $d\leq 4$ we list the feasible $ZZ$ words in $T$.

\begin{center}
\begin{tabular}{c|c}
$d$  &  feasible $ZZ$ words in $T$ \\
\hline
$0$ & $e^*_0$
\\ \hline
$1$ & $e^*_0, \qquad e_1e^*_0$
\\ \hline
$2$ & $e^*_0, \qquad e_1e^*_0, \qquad e_2e^*_0, \qquad e^*_1e_2e^*_0$
\\ \hline
$3$ & $e^*_0, \qquad e_1e^*_0, \qquad e_2e^*_0, \qquad e_3e^*_0$,
\\ & $e^*_1e_2e^*_0, \qquad e^*_1e_3e^*_0, \qquad e^*_2e_3e^*_0, \qquad 
    e_2e^*_1e_3e^*_0$
\\ \hline
$4$ & $e^*_0, \qquad e_1e^*_0, \qquad e_2e^*_0, \qquad e_3e^*_0, \qquad 
    e_4e^*_0$,
\\ 
& $e^*_1e_2e^*_0, \qquad e^*_1e_3e^*_0, \qquad e^*_1e_4e^*_0, \qquad 
  e^*_2e_3e^*_0,  \qquad e^*_2e_4e^*_0, \qquad e^*_3e_4e^*_0$,
\\
& $e_2e^*_1e_3e^*_0, \qquad e_2e^*_1e_4e^*_0, \qquad e_3e^*_1e_4e^*_0,
  \qquad e_3e^*_2e_4e^*_0, \qquad e^*_2e_3e^*_1e_4e^*_0$
\end{tabular}
\end{center}
\end{example}

\begin{conjecture}    \label{lem:25}   \samepage
The $R$-module  $Te^*_0$ is free with rank $2^d$. Moreover this module has an 
$R$-basis consisting of the feasible $ZZ$ words.
\end{conjecture}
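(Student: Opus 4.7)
The plan is to establish the conjecture in two halves: first that the feasible $ZZ$ words $R$-span $Te^*_0$, and second that they are $R$-linearly independent. A preliminary bookkeeping step is to verify by a direct combinatorial count that there are exactly $2^d$ feasible $ZZ$ words; the lists displayed for $d \leq 4$ suggest that feasible $ZZ$ words of length $k+1$ are in bijection with the $k$-element subsets of $\{1,\ldots,d\}$, so the total is $\sum_{k} \binom{d}{k}=2^d$. This bijection should be read off from the indices that appear in such a word, together with a canonical rule (visible in the examples) for how those indices are interleaved into a legal zigzag pattern.

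For the spanning assertion, I would proceed by a rewriting procedure on arbitrary words in the standard generators ending in $e^*_0$. Using the orthogonality relations \eqref{eq:eiej} and the completeness relations \eqref{eq:sumei} we may always assume the word is alternating and nontrivial. The vanishing relations \eqref{eq:esiakesj}, \eqref{eq:eiaskej}, passed through \eqref{eq:aei}--\eqref{eq:eiesi} and combined with the convexity identity of Lemma \ref{cor:EsrDDsDDss}, should allow one to rewrite any alternating word $w e^*_0$ as a $\K$-linear combination of words satisfying zigzag conditions (i) and (ii). Any remaining repeated indices would be removed using the fact that a factor of the form $e^*_0 \tau_i(a) e^*_0$ (arising, for instance, when two $e^*_0$ factors collide) equals the $\mu$-image of $x_i$ and can therefore be absorbed into the coefficient ring $R$. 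This last observation is what upgrades $\K$-coefficients to $R$-coefficients.

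For the $R$-linear independence half, I would try to construct, in the spirit of the Appendix used for $d \leq 5$, an explicit $T$-module $V$ together with a distinguished vector $\phi \in V$ so that the assignment $w\mapsto w.\phi$, restricted to feasible $ZZ$ words, yields an $R$-independent family in $V$. The natural candidate has free $R$-basis indexed by the feasible $ZZ$ words themselves, with $a$ and $a^*$ acting according to the pattern already visible in the proof of Theorem \ref{thm:mainpd5}: raising operators $(a-\th_h).r^h = r^{h+1}$ and $(a^*-\th^*_{i-h}).l^h r^i = l^{h+1}r^i$, combined with suitable lowering actions that introduce factors $y_i$ to encode the $\mu$-image of $x_i$. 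Verifying the defining relations \eqref{eq:eiej}--\eqref{eq:eiaskej} on this candidate would then produce the required faithful module, and the fact that the basis is indexed by feasible $ZZ$ words would immediately force $R$-independence.

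The main obstacle, I expect, is precisely the construction of a single module that simultaneously separates all $2^d$ feasible $ZZ$ words. For $d\leq 5$ the authors resorted to an ad hoc list in an appendix, with the relations verified ``by a laborious computation.'' A uniform treatment demands an intrinsic combinatorial description of how $a$ and $a^*$ transition between feasible $ZZ$ words, essentially a Bratteli-style diagram on this set, together with a closed-form statement of the structure constants for the actions in terms of the $\th_i$ and $\th^*_i$. Producing such a description, and then checking the relations \eqref{eq:esiakesj}, \eqref{eq:eiaskej} uniformly in $d$, is the step I would expect to be hardest; absent it, one is reduced to case analysis whose complexity grows rapidly with $d$, which is presumably why the conjecture remains open beyond $d=5$.
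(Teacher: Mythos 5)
The statement you are trying to prove is not proved in the paper at all: it appears in the ``Suggestions for future research'' section as an open conjecture, so there is no argument of the authors to compare yours against. What you have written is a strategy outline rather than a proof, and by your own admission the decisive step is missing. Two gaps in particular are fatal as things stand. First, in the spanning half, Lemma \ref{cor:EsrDDsDDss} only controls a window of three consecutive idempotent indices (it is the engine behind the convexity reduction of Lemma \ref{lem:convex}), whereas zigzag condition (ii) constrains a window of four consecutive generators $u_{i-2},u_{i-1},u_i,u_{i+1}$; you give no mechanism for enforcing that condition, and it does not follow from the three-term lemma. Moreover, your device for eliminating repeated indices --- absorbing a factor $e^*_0\tau_i(a)e^*_0=\mu(x_i)$ into the coefficient ring $R$ --- only applies when the repeated index is $0$ on a starred generator, since the right $R$-module structure on $Te^*_0$ is defined via $v.r=v\mu(r)$ and $\mu(R)=e^*_0Te^*_0$. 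Feasible $ZZ$ words require \emph{all} indices (starred and nonstarred alike) to be mutually distinct, so you must also eliminate collisions such as $e^*_j\cdots e^*_j$ with $j\neq 0$ or $e_i\cdots e_i$, and nothing in your sketch does this.

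Second, and more seriously, the independence half rests entirely on a $T$-module with free $R$-basis indexed by the feasible $ZZ$ words, together with explicit, relation-compatible actions of $a$ and $a^*$; you correctly identify this as the hard part and then do not supply it. The Appendix data for $d\leq 5$ shows how rapidly the structure constants grow in complexity (already for $d=5$ some coefficients occupy half a page), and no closed-form pattern uniform in $d$ is visible there. Without that construction, or some other injectivity argument (for instance, a faithful representation of $T$ or a direct proof that the evaluation map from the free $R$-module on feasible $ZZ$ words to $Te^*_0$ has trivial kernel), the linear independence claim is unsupported. Note also that your preliminary count of $2^d$ feasible $ZZ$ words via a bijection with subsets of $\{1,\ldots,d\}$ is asserted from the examples but not established; the rule for interleaving a given index set into a legal zigzag, and the fact that it is unique, both need proof. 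In short, the proposal is a reasonable research plan for attacking the conjecture, but it does not constitute a proof, and the conjecture remains open.
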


\begin{conjecture}   \samepage
For $0 \leq i \leq d$ the $R$-submodules $e^*_iTe^*_0$ and $e_iTe^*_0$ are both
free with rank $d \choose i$.
\end{conjecture}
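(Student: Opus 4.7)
My strategy is to bootstrap from Conjecture \ref{lem:25}, which asserts that $Te^*_0$ is a free $R$-module of rank $2^d$ with basis the feasible $ZZ$ words. Granting this, the orthogonality $e^*_ie^*_j=\delta_{ij}e^*_i$ together with $\sum_{i=0}^d e^*_i=1$ (and the corresponding identities for the $e_i$) yields direct-sum decompositions of right $R$-modules
\[
Te^*_0 \;=\; \bigoplus_{i=0}^d e^*_i T e^*_0 \;=\; \bigoplus_{i=0}^d e_i T e^*_0,
\]
since left multiplication by an idempotent commutes with the $R$-action (induced by $\mu$ into $e^*_0Te^*_0$). Each summand is therefore a direct summand of a finitely generated free module over the polynomial ring $R=\K[x_1,\ldots,x_d]$, hence projective, and so free by Quillen--Suslin. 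In particular the ranks are well-defined and sum to $2^d=\sum_{i=0}^d\binom{d}{i}$.

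It then suffices to establish, for each $i$, that $e^*_iTe^*_0$ (respectively $e_iTe^*_0$) contains at least $\binom{d}{i}$ $R$-linearly independent elements; equality with $\binom{d}{i}$ will then be forced by the rank identity above. A natural candidate family is indexed by the $i$-subsets $S=\{s_1<\cdots<s_i\}\subseteq\{1,\ldots,d\}$: for $e^*_iTe^*_0$ one can try elements of the form $w_S=e^*_ie_{s_i}e^*_{i-1}e_{s_{i-1}}\cdots e^*_1e_{s_1}e^*_0$, supplemented when needed by $e^*_i$-projections of shorter feasible $ZZ$ words so as to reach the target cardinality $\binom{d}{i}$. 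For $e_iTe^*_0$ a parallel construction applies with the outer starred generator replaced by $e_i$.

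The principal obstacle is proving the $R$-linear independence of these candidates. My approach would be to generalize the explicit faithful $T$-module $V$ built in the proof of Theorem \ref{thm:mainpd5}---whose basis in the Appendix includes the chains $r^h$ and $l^hr^i$ emanating from $\phi$---to arbitrary $d$, and then to argue from the combinatorics of this basis that the idempotent $e^*_i$ acts on $V$ as an $R$-linear projection of rank exactly $\binom{d}{i}$. Such a rank computation, together with the faithfulness of the $T$-action on $V$, would furnish the required independence. Carrying out this combinatorial rank computation uniformly for all $d$ is where I expect the bulk of the work to lie, in parallel with the case-by-case verification that underpins the $d\leq 5$ proof of Theorem \ref{thm:mainpd5}.
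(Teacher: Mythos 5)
This statement is one of the open conjectures in the paper's ``Suggestions for future research'' section; the paper gives no proof of it, so there is no argument of the authors to compare yours against. Judged on its own, your proposal contains a correct reduction but is not a proof. The sound part: granting Conjecture \ref{lem:25}, the decompositions $Te^*_0=\bigoplus_{i=0}^d e^*_iTe^*_0=\bigoplus_{i=0}^d e_iTe^*_0$ are indeed direct sums of right $R$-modules (left multiplication by the orthogonal idempotents commutes with the right action $v.r=v\mu(r)$), each summand is then finitely generated projective and hence free over $R=\K[x_1,\ldots,x_d]$ by Quillen--Suslin, the ranks sum to $2^d$, and a lower bound of $\binom{d}{i}$ on each rank would force equality. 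That is a genuine and useful observation: it reduces the conjecture (modulo Conjecture \ref{lem:25}) to producing $\binom{d}{i}$ $R$-independent elements in each summand.

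The gaps are twofold. First, everything is conditional on Conjecture \ref{lem:25}, which is itself unproven; note that it already subsumes the $\mu$-conjecture, since $e^*_0$ is one of the proposed basis elements and $e^*_0.r=\mu(r)$, so freeness at that basis element forces $\mu$ injective. Second, and more seriously, the step carrying all the content --- the $R$-linear independence of $\binom{d}{i}$ explicit elements of $e^*_iTe^*_0$ and of $e_iTe^*_0$ --- is not carried out, as you acknowledge. The tool you propose for it does not exist in the paper: the module $V$ of the Appendix is built case by case only for $d\le 5$, and even there the paper establishes only that $f\mapsto\mu(f).\phi$ is injective, not that the $T$-action on $V$ is faithful, which is what your independence argument would need. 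Moreover the combinatorics of projected ZZ words is more delicate than the proposal suggests: already for $d=2$ the two nonzero projections $e^*_2e_1e^*_0$ and $e^*_2e_2e^*_0$ must be $R$-linearly dependent if $e^*_2Te^*_0$ is to have rank $\binom{2}{2}=1$, so hidden $R$-relations among natural candidates do occur, and verifying that your chosen $\binom{d}{i}$ words avoid them is precisely the open problem. In short, the proposal is a reasonable plan of attack resting on another open conjecture, not a proof.
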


\begin{problem}   \samepage
An element of $T$ is called {\it central} whenever it commutes with every 
element of $T$.
The {\em center} $Z(T)$ is the $\K$-subalgebra of $T$ consisting of the 
central elements of $T$. Describe $Z(T)$. Find a generating set for $Z(T)$.
Find a basis for the $\K$-vector space $Z(T)$.
\end{problem}

\begin{problem}   \samepage
For $0 \leq i,j\leq d$ write the word  $e^*_0e_ie^*_je_0$ as a linear 
combination of the words $e^*_0e_re^*_0e_0$ $(0 \leq r\leq d)$.
What are the coefficients in this linear combination?
See \cite[Lemma 14.5]{qrac} for a partial answer.
\end{problem}

\newpage

\section{Appendix}

\indent
In this appendix we give some data that is used in the proof of 
Theorem \ref{thm:mainpd5}.
Let $d$ denote a nonnegative integer at most $5$ and
let $(\{\th_i\}_{i=0}^d;\{\th^*_i\}_{i=0}^d)$ denote a sequence of scalars
taken from $\K$ that satisfy the conditions (i), (iii) of Conjecture
\ref{conj:main}. 
For $0 \leq i \leq d-2$ we define scalars
\[
 \ve_i = (\th_{i+1}-\th_{i+2})(\th^*_{i+1}-\th^*_{i+2}) 
        - (\th_i - \th_{i+1})(\th^*_i - \th^*_{i+1}).
\]

\bigskip
\noindent
{\bf The case $d=0$}

\medskip

The basis is $\;\;\phi$.

The action is $\;\;a.\phi = \th_0 \phi$, $\;\;a^*.\phi = \th^*_0 \phi$.

\bigskip\bigskip
\noindent
{\bf The case $d=1$}

\medskip

The basis is $\;\;\phi,\; r$.

The action of $a$ is
\[
    a.\phi = \th_0 \phi + r, \qquad\qquad a.r = \th_1 r.
\]

The action of $a^*$ is
\[
    a^*.\phi = \th^*_0 \phi, \qquad\qquad  a^*.r = \th^*_1 r + \vphi_1 \phi.
\]

\bigskip\bigskip
\noindent
{\bf The case $d=2$}

\medskip

The basis is $\quad \phi, \; r, \; lr^2, \; r^2$.

The action of $a$ is
\begin{align*}
  a.\phi &= \th_0 \phi + r, &   a.r &= \th_1 r + r^2,
\\
  a.lr^2 &= \th_1 lr^2 + (\vphi_1 - \ve_0) r^2, &  a.r^2 &= \th_2 r^2.
\end{align*}

The action of $a^*$ is
\begin{align*}
 a^*.\phi &= \th^*_0 \phi, &  a^*.r &= \th^*_1 r + \vphi_1 \phi,
\\
 a^*.lr^2 &= \th^*_1 lr^2 + \vphi_2 \phi, &  a^*.r^2 &= \th^*_2 r^2 + lr^2.
\end{align*}

\bigskip\bigskip

For $d \geq 3$ we define 
$\beta \in \K$ such that $\be+1$ is the common value of \eqref{eq:indep}.
We remark that $\be+1$ is nonzero; otherwise $\th_0=\th_3$.
For $d\geq 4$ the scalar $\be$ is nonzero; otherwise $\th_0=\th_4$.
For $d=5$ the scalar $\be^2+\be-1$ is nonzero; otherwise $\th_0=\th_5$.

\newpage

\noindent
{\bf The case $d=3$}

\medskip

The basis is
\[
\begin{array}{ccc}
\phi 
\\
r & lr^2 & l^2r^3
\\
r^2 & lr^3 & rl^2r^3
\\
r^3
\end{array}
\]

\bigskip
\addtolength{\doublerulesep}{-.2\doublerulesep}

The action of $a$ is
\\ \\
\noindent
$\qquad
\begin{array}{r|l}
v \quad & \qquad a.v
\\ \hline \hline
\phi & \th_0 \phi + r
\\ \hline
r & \th_1 r + r^2
\\
lr^2 & \th_1 lr^2 
  + (\vphi_1 -\ve_0) r^2
  + (\be+1)^{-1} lr^3
\\
l^2r^3 & \th_1 l^2r^3 + rl^2r^3
\\ \hline
r^2 & \th_2 r^2 + r^3
\\
lr^3 &
   \th_2 lr^3
   + (\vphi_1
     + (\th_0-\th_1)(\th^*_0-\th^*_3)-(\th_0-\th_3)(\th^*_2-\th^*_3)) r^3
\\
rl^2r^3 & \text{ see below}
\\ \hline
r^3 & \th_3 r^3
\end{array}
$

\bigskip\medskip

$a.rl^2r^3$ is the weighted sum involving the following terms and coefficients.
\\ \\
$\qquad
\begin{array}{r|l}
\text{term} & \qquad \text{coefficient}
\\ \hline
rl^2r^3 & \th_2
\\
r^3 & \vphi_2 
     + \vphi_1(\be+2)((\th_0-\th_1)(\th^*_1-\th^*_2)-(\th_1-\th_2)(\th^*_2-\th^*_3))
\\
    & + ((\th_0-\th_1)(\th^*_0-\th^*_3)-(\th_0-\th_3)(\th^*_2-\th^*_3))
\\
    & \qquad \times ((\th_0-\th_1)(\th^*_0-\th^*_2)-(\th_1-\th_2)(\th^*_1-\th^*_3))
\end{array}
$

\bigskip\bigskip\bigskip\bigskip

The action of $a^*$ is
\\ \\
$\qquad
\begin{array}{r|l}
v \quad & \qquad a^*.v
\\ \hline \hline
\phi & \th^*_0 \phi
\\ \hline
r & \th^*_1 r + \vphi_1 \phi
\\
lr^2 & \th^*_1 lr^2 + \vphi_2 \phi
\\
l^2r^3 & \th^*_1 l^2r^3 + \vphi_3 \phi
\\ \hline
r^2 & \th^*_2 r^2 + lr^2
\\
lr^3 & \th^*_2 lr^3 + l^2r^3
\\
rl^2r^3 & 
  \th^*_2 rl^2r^3
  + \vphi_3 (\be+1)^{-1} r
  + (\vphi_1
     + (\th_0-\th_1)(\th^*_0-\th^*_2)-(\th_1-\th_2)(\th^*_1-\th^*_3)) l^2r^3
\\ \hline
r^3 & \th^*_3 r^3 + lr^3
\end{array}
$

\newpage

\noindent
{\bf The case $d=4$}

\medskip

The basis is
\[
\begin{array}{cccccc}
\phi 
\\
r & lr^2 & l^2r^3 & l^3r^4
\\
r^2 & lr^3 & rl^2r^3 & l^2r^4 & rl^3r^4 & lr^2l^3r^4
\\
r^3 & lr^4 & rl^2r^4 & r^2l^3r^4
\\
r^4
\end{array}
\]

\bigskip

The action of $a$ is
\\ \\
$\qquad
\begin{array}{r|l}
v \quad & \qquad a.v
\\ \hline \hline
\phi & \th_{0} \phi + r
\\ \hline
r & \th_{1} r + r^2
\\
lr^2 & 
  \th_1 lr^2
 + (\vphi_1 -\ve_0)r^2
 + (\be+1)^{-1} lr^3
\\
l^2r^3 & \th_1 l^2r^3 + rl^2r^3
\\
l^3r^4 & \th_1 l^3r^4 + rl^3r^4
\\ \hline
r^2 & \th_2 r^2 + r^3
\\
lr^3 & 
 \th_2 lr^3
 + (\vphi_1 
     - \be^{-1}(\be+1)(\ve_0+\ve_1)) r^3
 + \be^{-1}lr^4
\\
rl^2r^3 & \text{\rm see below}
\\
l^2r^4 & \th_2 l^2r^4 + rl^2r^4
\\
rl^3r^4 & \th_2 rl^3r^4 + r^2l^3r^4
\\
lr^2l^3r^4 & \text{\rm see below}
\\ \hline
r^3 & \th_3 r^3 + r^4
\\
lr^4 & \th_3 lr^4 + (\vphi_1 - \be(\be+1)\ve_1) r^4
\\
rl^2r^4 & \th_3 rl^2r^4 
   + (\vphi_2 - \vphi_1(\be+1)(\be+2)\ve_1 
        + (\be+1)^2\ve_1(\ve_0+(\be+1)\ve_1)) r^4
\\
r^2l^3r^4 & \text{ see below}
\\ \hline
r^4 & \th_4 r^4
\end{array}
$

\bigskip\medskip

$a.rl^2r^3$ is the weighted sum involving the following terms and coefficients.
\\ \\
$\qquad
\begin{array}{r|l}
 \text{\rm term} & \qquad \text{\rm coefficient}
\\ \hline
 rl^2r^3 & \th_2
\\
 r^3 & \vphi_2
        - \vphi_1 \beta^{-1}(\be+2)(\ve_0+\ve_1)
        + \be^{-2}(\be+1)(\ve_0+\ve_1)((\be+1)\ve_0+\ve_1)
\\
 lr^4 & \be^{-2}(-\ve_0+(\be^2+\be-1)\ve_1)
\\
  rl^2r^4 & \be^{-1}
\end{array}
$

\newpage

$a.lr^2l^3r^4$ is the weighted sum involving the following terms and coefficients.
\\ \\
$\qquad
\begin{array}{r|l}
 \text{\rm term} & \qquad \text{\rm coefficient}
\\ \hline
 lr^2l^3r^4 & \th_2
\\
 r^3 &  \vphi_4 \be^{-1}(\be+1)^{-1}
\\
 lr^4 &  \vphi_3 (\be+1)^{-1}  
     - \vphi_2 (\be+1)\ve_1
     + \vphi_1 \be^{-1}(\be+1)^2\ve_1(\ve_0+(\be+1)\ve_1)   
\\
    & - \be^{-1}(\be+1)^2\ve_1(\ve_0+\ve_1)(\ve_0+(\be+1)\ve_1)
\\
 r^2l^3r^4 & \vphi_1 - \be^{-1}((\be+1)\ve_0+(2\be+1)\ve_1)
\end{array}
$

\bigskip\medskip

$a.r^2l^3r^4$ is the weighted sum involving the following terms and coefficients.
\\ \\
$\qquad
\begin{array}{r|l}
 \text{\rm term} & \qquad \text{\rm coefficient}
\\ \hline
r^2l^3r^4 & \th_3
\\
r^4 & 
    \vphi_3
     - \vphi_2(\be+1)^2\ve_1 
     + \vphi_1 \be^{-1}(\be+1)^3\ve_1(\ve_0+(\be+1)\ve_1)
\\
    & - \be^{-1}(\be+1)^3\ve_1(\ve_0+\ve_1)(\ve_0+(\be+1)\ve_1)
\end{array}
$

\bigskip\bigskip\bigskip\bigskip

The action of $a^*$ is
\\ \\
$\qquad
\begin{array}{r|l}
v \quad & \qquad a^*.v
\\ \hline \hline
\phi & \th^*_0 \phi
\\ \hline
r & \th^*_1 r + \vphi_1 \phi
\\
lr^2 & \th^*_1 lr^2 + \vphi_2 \phi
\\
l^2r^3 & \th^*_1 l^2r^3 + \vphi_3 \phi
\\
l^3r^4 & \th^*_1 l^3r^4 + \vphi_4 \phi
\\ \hline
r^2 & \th^*_2 r^2 + lr^2
\\
lr^3 & \th^*_2 lr^3 + l^2r^3
\\
rl^2r^3 & \th^*_2 rl^2r^3
 + \vphi_3 (\be+1)^{-1} r
 + (\vphi_1
    - \be^{-1}((\be+1)\ve_0+\ve_1)) l^2r^3
 + \be^{-1}(\be+1)^{-1} l^3r^4
\\
l^2r^4 & \th^*_2 l^2r^4 + l^3r^4
\\
rl^3r^4 & 
 \th^*_2 rl^3r^4 
 + \vphi_4 \be^{-1} r
 + (\vphi_1
     - \be^{-1}(\be+1)(\ve_0+\ve_1)) l^3r^4
\\
lr^2l^3r^4 & \text{ see below}
\\ \hline
r^3 & \th^*_3 r^3 + lr^3
\\
lr^4 & \th^*_3 lr^4 + l^2r^4
\\
rl^2r^4 &  \th^*_3 rl^2r^4
 + (\vphi_1 - (\ve_0+(\be+2)\ve_1)) l^2r^4
 + (\be+1)^{-1} rl^3r^4
\\
r^2l^3r^4 & \th^*_3 r^2l^3r^4 + lr^2l^3r^4
\\ \hline
r^4 & \th^*_4 r^4 + lr^4
\end{array}
$

\newpage

$a^*.lr^2l^3r^4$ is the weighted sum involving the following terms and coefficients.
\\ \\
$\qquad
\begin{array}{r|l}
 \text{\rm term} & \qquad \text{\rm coefficient}
\\ \hline
 lr^2l^3r^4  & \th^*_2
\\
 r & - \vphi_4 \be^{-2}(\ve_0+(\beta+1)\ve_1)
\\
 lr^2 & \vphi_4 \be^{-1}
\\
 l^3r^4 &
   \vphi_2
     - \vphi_1 \be^{-1}(\be+2)(\ve_0+(\be+1)\ve_1)
     + \be^{-2}(\be+1)^2(\ve_0+\ve_1)(\ve_0+(\be+1)\ve_1)
\end{array}
$

\bigskip\bigskip\bigskip\bigskip

\noindent
{\bf The case $d=5$}

\medskip

The basis is
\[
\begin{array}{cccccccccc}
\phi
\\
r & lr^2 & l^2r^3 & l^3r^4 & l^4r^5
\\
r^2 & lr^3 & rl^2r^3 & l^2r^4 & rl^3r^4 & lr^2l^3r^4 & l^3r^5 & rl^4 r^5
& lr^2l^4r^5 & l^2r^3l^4r^5
\\
r^3 & lr^4 & rl^2r^4 & r^2l^3r^4 & l^2r^5 & rl^3r^5 & lr^2l^3r^5 & r^2l^4r^5
& lr^3l^4r^5 & rl^2r^3l^4r^5
\\
r^4 & lr^5 & rl^2r^5 & r^2l^3r^5 & r^3l^4r^5
\\
r^5
\end{array}
\]

\newpage

The action of $a$ is
\\ \\
$\qquad
\begin{array}{r|l}
v \quad & \qquad a.v
\\ \hline \hline
\phi & \th_0 \phi + r
\\ \hline
r & \th_1 r + r^2
\\
lr^2 & \th_1 lr^2
 + (\vphi_1  - \ve_0) r^2
 + (\be+1)^{-1} lr^3
\\
l^2r^3 & \th_1 l^2r^3 + rl^2r^3
\\
l^3r^4 & \th_1 l^3r^4 + rl^3r^4
\\
l^4r^5 & \th_1 l^4r^5 + rl^4r^5
\\ \hline
r^2 & \th_2 r^2 + r^3
\\
lr^3 & \th_2 lr^3 
 + (\vphi_1 - \be^{-1}(\be+1)(\ve_0+\ve_1)) r^3
 + \be^{-1} lr^4
\\
rl^2r^3 & \text{ see below}
\\
l^2r^4 & \th_2 l^2r^4 + rl^2r^4
\\
rl^3r^4 & \th_2 rl^3r^4 + r^2l^3r^4
\\
lr^2l^3r^4 & \text{ see below}
\\
l^3r^5 & \th_2 l^3r^5 + rl^3r^5
\\
rl^4r^5 & \th_2 rl^4r^5 + r^2l^4r^5
\\
lr^2l^4r^5 &   \th_2 lr^2l^4r^5
 + \frac{\vphi_5(\be+2)}{(\be+1)(\be^2+\be-1)} r^3
 + (\vphi_1 - (\ve_0+(\be+2)\ve_1)) r^2l^4r^5
 + (\be+1)^{-1} lr^3l^4r^5
\\
l^2r^3l^4r^5 & \th_2 l^2r^3l^4r^5 + rl^2r^3l^4r^5
\\ \hline
r^3 & \th_3 r^3 + r^4
\\
lr^4 &   \th_3 lr^4
 + (\vphi_1 - \be(\be+1)\ve_1) r^4
 + \frac{\be+1}{\be^2+\be-1} lr^5
\\
rl^2r^4 & \text{ see below}
\\
r^2l^3r^4 & \text{ see below}
\\
l^2r^5 & \th_3 l^2r^5 + rl^2r^5
\\
rl^3r^5 & \th_3 rl^3r^5 + r^2l^3r^5
\\
lr^2l^3r^5 & \text{ see below}
\\
r^2l^4r^5 & \th_3 r^2l^4r^5 + r^3l^4r^5
\\
lr^3l^4r^5 & \text{ see below}
\\
rl^2r^3l^4r^5 & \text{ see below}
\\ \hline
r^4 & \th_4 r^4 + r^5
\\
lr^5 & \th_4 lr^5 + (\vphi_1 - (\be^2+\be-1)(-\ve_0+(\be^2-1)\ve_1)) r^5
\\
rl^2r^5 & \text{ see below}
\\
r^2l^3r^5 & \text{ see below}
\\
r^3l^4r^5 &  \text{ see below}
\\ \hline
r^5 & \th_5 r^5
\end{array}
$

\newpage

$a.rl^2r^3$ is the weighted sum involving the following terms and coefficients.
\\ \\
$\qquad
\begin{array}{r|l}
 \text{\rm term} & \qquad \text{\rm coefficient}
\\ \hline
 rl^2r^3 &  \th_2 
\\
 r^3 & 
     \vphi_2
     - \vphi_1 \be^{-1}(\be+2)(\ve_0+\ve_1)
     + \be^{-2}(\be+1)(\ve_0+\ve_1)((\be+1)\ve_0+\ve_1)) 
\\
 lr^4 & \be^{-2}(-\ve_0+(\be^2+\be-1)\ve_1)
\\
 rl^2r^4 &  \be^{-1}
\\
 l^2r^5 & - \frac{1}{\be(\be^2+\be-1)}
\end{array} 
$

\bigskip\medskip

$a.lr^2l^3r^4$ is the weighted sum involving the following terms and coefficients.
\\ \\
$\qquad
\begin{array}{r|l}
 \text{\rm term} & \qquad \text{\rm coefficient}
\\ \hline
 lr^2l^3r^4 & \th_2
\\
 r^3 &  \vphi_4 \be^{-1}(\be+1)^{-1}
\\
 lr^4 & \vphi_3 (\be+1)^{-1}
     - \vphi_2(\be+1)\ve_1
     + \vphi_1 \be^{-1}(\be+1)^2\ve_1(\ve_0+(\be+1)\ve_1)
\\
   & - \be^{-1}(\be+1)^2\ve_1(\ve_0+\ve_1)(\ve_0+(\be+1)\ve_1) )
\\
r^2l^3r^4 & \vphi_1 - \be^{-1}((\be+1)\ve_0+(2\be+1)\ve_1)
\\
l^2r^5 &  \frac{\vphi_1(\be+1)(-(\be+1)\ve_0+(\be^3+\be^2-2\be-1)\ve_1)}
          {\be(\be^2+\be-1)}
        - \frac{(\be+1)^2\ve_1(-(\be+1)\ve_0+(\be^3+\be^2-2\be-1)\ve_1)}
          {\be^2+\be-1}
\\
 rl^3r^5 & 
           \frac{-(\be+1)\ve_0+(\be^3+\be^2-2\be-1)\ve_1}
                {\be(\be^2+\be-1)}
\\
 lr^2l^3r^5 &   \frac{1}{\be^2+\be-1}
\\
r^2l^4r^5 &     \frac{1}{\be(\be^2+\be-1)}
\end{array}
$

\bigskip\medskip

$a.rl^2r^4$ is the weighted sum involving the following terms and coefficients.
\\ \\
$\qquad
\begin{array}{r|l}
 \text{\rm term} & \qquad \text{\rm coefficient}
\\ \hline
 rl^2r^4 & \th_3
\\
 r^4 & 
   \vphi_2
   - \vphi_1(\be+1)(\be+2)\ve_1
   + (\be+1)^2\ve_1(\ve_0+(\be+1)\ve_1)
\\
 lr^5 &  \frac{(\be+1)(\be+2)(-\ve_0+(\be^2-2)\ve_1)}{\be^2+\be-1} 
\\
 rl^2r^5 &  \frac{\be+2}{\be^2+\be-1}
\end{array}
$

\bigskip\medskip

$a.r^2l^3r^4$ is the weighted sum involving the following terms and coefficients.
\\ \\
$\qquad
\begin{array}{r|l}
 \text{\rm term} & \qquad \text{\rm coefficient}
\\ \hline
 r^2l^3r^4 & \th_3
\\
 r^4 & \vphi_3 - \vphi_2(\be+1)^2\ve_1
   + \vphi_1 \be^{-1}(\be+1)^3\ve_1(\ve_0+(\be+1)\ve_1)
\\
   & - \be^{-1}(\be+1)^3\ve_1(\ve_0+\ve_1)(\ve_0+(\be+1)\ve_1)
\\
 lr^5 & \frac{(\be+1)^3((\be+1)\ve_0^2-(2\be^3+2\be^2-4\be-3)\ve_0\ve_1
              +(\be^2-2)(\be^3+\be^2-2\be-1)\ve_1^2)}
       {\be(\be^2+\be-1)}
\\
 rl^2r^5 & \frac{(\be+1)^2(-(\be+1)\ve_0+(\be^3+\be^2-2\be-1)\ve_1)}
        {\be(\be^2+\be-1)}
\\
 r^2l^3r^5 &    \frac{\be+1}{\be^2+\be-1}
\end{array}
$

\newpage

$a.lr^2l^3r^5$ is the weighted sum involving the following terms and coefficients.
\\ \\
$\qquad
\begin{array}{r|l}
 \text{\rm term} & \qquad \text{\rm coefficient}
\\ \hline
 lr^2l^3r^5 & \th_3  
\\
 lr^5 &  \vphi_3 (\be+1)^{-1}
     - \vphi_2 \be^{-1}(\be+1)^2(-\ve_0+(\be^2-1)\ve_1)   \rule[.7cm]{0cm}{0cm}
\\
  &+ \vphi_1 \be^{-1}(\be+1)^3(-\ve_0+(\be^2-1)\ve_1)(-\ve_0+(\be^2+\be-1)\ve_1)
\\ 
  & - (\be+1)^2(\be^2+\be-1)\ve_1
     (-\ve_0+(\be^2-1)\ve_1)(-\ve_0+(\be^2+\be-1)\ve_1)   \rule[-.5cm]{0cm}{0cm}
\\
 r^2l^3r^5 & 
   \vphi_1
 - \be^{-1}(-(\be+1)\ve_0+(2\be^3+2\be^2-2\be-1)\ve_1)
\\
 r^3l^4r^5 &   \be^{-1}(\be+1)^{-1}
\end{array}
$

\bigskip\medskip

$a.lr^3l^4r^5$ is the weighted sum involving the following terms and coefficients.
\\ \\
$\qquad
\begin{array}{r|l}
 \text{\rm term} & \qquad \text{\rm coefficient}
\\ \hline
 lr^3l^4r^5 & \th_3
\\
 r^4 &  \frac{\vphi_5}{\be(\be^2+\be-1)} 
\\
 lr^5 & \vphi_4 \be^{-1}
           - \vphi_3 (\be+2)(-\ve_0+(\be^2-1)\ve_1)       \rule[.9cm]{0cm}{0cm}
\\
  & + \vphi_2 \be^{-1}(\be+1)^3(-\ve_0+(\be^2-1)\ve_1)
                                (-\ve_0+(\be^2+\be-1)\ve_1)
\\
   &  - \vphi_1 (\be+1)^3(\be+2)\ve_1(-\ve_0+(\be^2-1)\ve_1)
                                        (-\ve_0+(\be^2+\be-1)\ve_1)
\\
   & + \be^{-1}(\be+1)^3(\be^2+\be-1)\ve_1(\ve_0+(\be+1)\ve_1)
\\
   & \qquad \times (-\ve_0+(\be^2-1)\ve_1)(-\ve_0+(\be^2+\be-1)\ve_1)
                                                         \rule[-.5cm]{0cm}{0cm}
\\
 r^3l^4r^5 &
   \vphi_1
    - \be^{-1}(-\ve_0+(\be+1)(\be^2+\be-1)\ve_1)
\end{array}
$

\newpage

$a.rl^2r^3l^4r^5$ is the weighted sum involving the following terms and coefficients.
\\ \\
$\qquad
\begin{array}{r|l}
 \text{\rm term} & \qquad \text{\rm coefficient}
\\ \hline
 rl^2r^3l^4r^5 & \th_3
\\
 r^4 & \frac{\vphi_1\vphi_5(\be+2)}
        {\be(\be^2+\be-1)}
  - \frac{\vphi_5((\be^2+\be-1)\ve_0+(\be+1)(2\be^2+3\be-1)\ve_1)}
        {\be^2(\be^2+\be-1)}
\\
 lr^5 &  \frac{\vphi_5(\be+2)}{\be(\be^2+\be-1)^2}
      +\vphi_4 \be^{-2} (-(\be^2+\be-1)\ve_0+(\be+1)(\be^3-3\be+1)\ve_1)
                                                             \rule[.9cm]{0cm}{0cm}
\\
  &  -\vphi_3 \be^{-1}(\be+2)(-\ve_0+(\be^2-1)\ve_1)
        (-(\be^2+\be-1)\ve_0+(\be+1)(\be^3-3\be+1)\ve_1)
\\
  & + \vphi_2 \be^{-2}(\be+1)^3(-\ve_0+(\be^2-1)\ve_1)
                                       (-\ve_0+(\be^2+\be-1)\ve_1)
\\
  & \qquad \times
     (-(\be^2+\be-1)\ve_0+(\be+1)(\be^3-3\be+1)\ve_1)
\\
  & - \vphi_1 
      \be^{-1}(\be+2)(\be+1)^3\ve_1(-\ve_0+(\be^2-1)\ve_1)
                                        (-\ve_0+(\be^2+\be-1)\ve_1)
\\
  & \qquad \times
     (-(\be^2+\be-1)\ve_0 + (\be+1)(\be^3-3\be+1)\ve_1)
\\
  & + \be^{-2}(\be+1)^3(\be^2+\be-1)\ve_1(\ve_0+(\be+1)\ve_1)
                                          (-\ve_0+(\be^2-1)\ve_1)
\\
  &  \qquad \times
     (-\ve_0+(\be^2+\be-1)\ve_1)(-(\be^2+\be-1)\ve_0+(\be+1)(\be^3-3\be+1)\ve_1)
\\
 rl^2r^5 & 
    \vphi_4 \be^{-1}
    - \vphi_3(\be+2)(-\ve_0+(\be^2-1)\ve_1)               \rule[.7cm]{0cm}{0cm}
\\
   & + \vphi_2 \be^{-1}(\be+1)^3(-\ve_0+(\be^2-1)\ve_1)
                            (-\ve_0+(\be^2+\be-1)\ve_1)   
\\
   & - \vphi_1 (\be+2)(\be+1)^3\ve_1(-\ve_0+(\be^2-1)\ve_1)
                (-\ve_0+(\be^2+\be-1)\ve_1)
\\
   & + \be^{-1}(\be+1)^3(\be^2+\be-1)\ve_1(\ve_0+(\be+1)\ve_1)
\\
   & \qquad \times (-\ve_0+(\be^2-1)\ve_1)(-\ve_0+(\be^2+\be-1)\ve_1)
\\
 r^3l^4r^5 & \vphi_2
      - \vphi_1 \be^{-1}(\be+2)(-\ve_0+(2\be^2+\be-1)\ve_1)  \rule[.7cm]{0cm}{0cm}
\\
  & - \be^{-2}(\be^2+\be-1)\ve_0^2
      + \be^{-2}(\be+1)(2\be^3-\be^2-5\be+2)\ve_0\ve_1
\\
  & + \be^{-2}(\be+1)^2(3\be-1)(\be^2+\be-1) \ve_1^2
\end{array}
$

\bigskip\medskip

$a.rl^2r^5$ is the weighted sum involving the following terms and coefficients.
\\ \\
$\qquad
\begin{array}{r|l}
 \text{\rm term} & \qquad \text{\rm coefficient}
\\ \hline
rl^2r^5 & \th_4
\\
r^5 & \vphi_2 - \vphi_1 (\be+1)(\be+2)(-\ve_0+(\be^2-1)\ve_1)
\\
  & + (\be+1)(\be^2+\be-1)(-\ve_0+(\be^2-1)\ve_1)(-\ve_0+(\be^2+\be-1)\ve_1)
\end{array}
$

\bigskip\medskip

$a.r^2l^3r^5$ is the weighted sum involving the following terms and coefficients.
\\ \\
$\qquad
\begin{array}{r|l}
 \text{\rm term} & \qquad \text{\rm coefficient}
\\ \hline
r^2l^3r^5 & \th_4
\\
r^5 & \vphi_3 - \vphi_2 \be^{-1}(\be+1)^3(-\ve_0+(\be^2-1)\ve_1)
\\
  & + \vphi_1 \be^{-1}(\be+1)^4(-\ve_0+(\be^2-1)\ve_1)
                             (-\ve_0+(\be^2+\be-1)\ve_1)
\\
  & - (\be+1)^3(\be^2+\be-1)\ve_1(-\ve_0+(\be^2-1)\ve_1)
                  (-\ve_0+(\be^2+\be-1)\ve_1)
\end{array}
$

\newpage

$a.r^3l^4r^5$ is the weighted sum involving the following terms and coefficients.
\\ \\
$\qquad
\begin{array}{r|l}
 \text{\rm term} & \qquad \text{\rm coefficient}
\\ \hline
r^3l^4r^5 & \th_4
\\
r^5 & \vphi_4 - \vphi_3 \be(\be+2)(-\ve_0+(\be^2-1)\ve_1)
\\
  & + \vphi_2 (\be+1)^3(-\ve_0+(\be^2-1)\ve_1)(-\ve_0+(\be^2+\be-1)\ve_1)
\\
   & - \vphi_1 \be(\be+1)^3(\be+2)\ve_1(-\ve_0+(\be^2-1)\ve_1)
                                          (-\ve_0+(\be^2+\be-1)\ve_1)
\\
   & + (\be+1)^3(\be^2+\be-1)\ve_1(\ve_0+(\be+1)\ve_1)
       (-\ve_0+(\be^2-1)\ve_1)(-\ve_0+(\be^2+\be-1)\ve_1)
\end{array}
$

\newpage

The action of $a^*$ is
\\ \\
$\qquad
\begin{array}{r|l}
v \quad & \qquad a^*.v
\\ \hline \hline
\phi & \th^*_0 \phi
\\ \hline
r & \th^*_1 r + \vphi_1 \phi
\\
lr^2 & \th^*_1 lr^2 + \vphi_2 \phi
\\
l^2r^3 & \th^*_1 l^2r^3 + \vphi_3 \phi
\\
l^3r^4 & \th^*_1 l^3r^4 + \vphi_4 \phi
\\
l^4r^5 & \th^*_1 l^4r^5 + \vphi_5 \phi
\\ \hline
r^2 & \th^*_2 r^2 + lr^2
\\
lr^3 & \th^*_2 lr^3 + l^2r^3
\\
rl^2r^3 &  \th^*_2 rl^2r^3
 + \vphi_3 (\be+1)^{-1} r
 + (\vphi_1 - \be^{-1}((\be+1)\ve_0+\ve_1)) l^2r^3
 + \be^{-1}(\be+1)^{-1} l^3r^4
\\
l^2r^4 & \th^*_2 l^2r^4 + l^3r^4
\\
rl^3r^4 &  \th^*_2 rl^3r^4
 + \vphi_4 \be^{-1} r
 + (\vphi_1 - \be^{-1}(\be+1)(\ve_0+\ve_1)) l^3r^4
 + \frac{1}{\be(\be^2+\be-1)} l^4r^5
\\
lr^2l^3r^4 & \text{ see below}
\\
l^3r^5 & \th^*_2 l^3r^5 + l^4r^5
\\
rl^4r^5 &  \th^*_2 rl^4r^5
 + \frac{\vphi_5(\be+1)}{\be^2+\be-1} r
 + (\vphi_1 - (\ve_0+(\be+1)\ve_1)) l^4r^5
\\
lr^2l^4r^5 & \text{ see below}
\\
l^2r^3l^4r^5 & \text{ see below}
\\ \hline
r^3 & \th^*_3 r^3 + lr^3
\\
lr^4 & \th^*_3 lr^4 + l^2r^4
\\
rl^2r^4 &  \th^*_3 rl^2r^4
 + (\vphi_1 - (\ve_0+(\be+2)\ve_1)) l^2r^4
 + (\be+1)^{-1} rl^3r^4
 + \frac{\be+2}{(\be+1)(\be^2+\be-1)} l^3r^5
\\
r^2l^3r^4 & \th^*_3 r^2l^3r^4 + lr^2l^3r^4
\\
l^2r^5 & \th^*_3 l^2r^5 + l^3r^5
\\
rl^3r^5 & \th^*_3 rl^3r^5
 + (\vphi_1 - \be^{-1}(-\ve_0+(\be+1)(\be^2+\be-1)\ve_1)) l^3r^5
 + \be^{-1} rl^4r^5
\\
lr^2l^3r^5 & \text{ see below}
\\
r^2l^4r^5 & \th^*_3 r^2l^4r^5 + lr^2l^4r^5
\\
lr^3l^4r^5 & \th^*_3 lr^3l^4r^5 + l^2r^3l^4r^5
\\
rl^2r^3l^4r^5 & \text{ see below}
\\ \hline
r^4 & \th^*_4 r^4 + lr^4
\\
lr^5 & \th^*_4 lr^5 + l^2r^5
\\
rl^2r^5 &   \th^*_4 rl^2r^5 
 + (\vphi_1 - (\be+1)(-\ve_0+(\be^2+\be-2)\ve_1)) l^2r^5
 + (\be+1)^{-1} rl^3r^5
\\
r^2l^3r^5 & \th^*_4 r^2l^3r^5 + lr^2l^3r^5
\\
r^3l^4r^5 & \th^*_4 r^3l^4r^5 + lr^3l^4r^5
\\ \hline
r^5 & \th^*_5 r^5 + lr^5
\end{array}
$

\newpage

$a^*.lr^2l^3r^4$ is the weighted sum involving the following terms and coefficients.
\\ \\
$\qquad
\begin{array}{r|l}
 \text{\rm term} & \text{\rm coefficient}
\\ \hline
 lr^2l^3r^4 & \th^*_2
\\
 r &  \frac{\vphi_5(\be+2)}{\be(\be^2+\be-1)^2}
   - \vphi_4 \be^{-2}(\ve_0+(\be+1)\ve_1)
\\
 lr^2 & \vphi_4 \be^{-1}
\\
 l^3r^4 &
   \vphi_2
   - \vphi_1 \be^{-1}(\be+2)(\ve_0+(\be+1)\ve_1)
   + \be^{-2}(\be+1)^2(\ve_0+\ve_1)(\ve_0+(\be+1)\ve_1)
\\
 l^4r^5 &  \frac{\vphi_1(\be+2)}
       {\be(\be^2+\be-1)}
   - \frac{(\be+1)^2(\ve_0+(\be+1)\ve_1)}
       {\be^2(\be^2+\be-1)}
\end{array}
$

\bigskip\medskip

$a^*.lr^2l^4r^5$ is the weighted sum involving the following terms and coefficients.
\\ \\
$\qquad
\begin{array}{r|l}
 \text{\rm term} & \text{\rm coefficient}
\\ \hline
 lr^2l^4r^5 & \th^*_2 
\\
 r &  - \frac{\vphi_5(\be+1)(\be+2)\ve_1}{\be^2+\be-1}
\\
 lr^2 &  \frac{\vphi_5(\be+2)}{\be^2+\be-1} 
\\
 l^4r^5 & 
   \vphi_2
   - \vphi_1 (\be+1)(\be+2)\ve_1
   + (\be+1)^2\ve_1(\ve_0+(\be+1)\ve_1)
\end{array}
$

\bigskip\medskip

$a^*.l^2r^3l^4r^5$ is the weighted sum involving the following terms and coefficients.
\\ \\
$\qquad
\begin{array}{r|l}
 \text{\rm term} & \text{\rm coefficient}
\\ \hline
 l^2r^3l^4r^5 & \th^*_2
\\
 r &  \frac{\vphi_5(\be+1)^3\ve_1(-\ve_0+(\be^2+\be-1)\ve_1)}
        {\be(\be^2+\be-1)}
\\
 lr^2 &  - \frac{\vphi_5(\be+1)^2(-\ve_0+(\be^2+\be-1)\ve_1)}
            {\be(\be^2+\be-1)}
\\
 l^2r^3 &  \frac{\vphi_5(\be+1)}{\be^2+\be-1} 
\\
 l^4r^5 & \vphi_3 - \vphi_2 \be^{-1}(\be+1)^2(-\ve_0+(\be^2+\be-1)\ve_1)
                                                              \rule[.7cm]{0cm}{0cm}
\\
  & + \vphi_1 \be^{-1}(\be+1)^4\ve_1(-\ve_0+(\be^2+\be-1)\ve_1)
\\
  & - (\be+1)^3\ve_1(\ve_0+(\be+1)\ve_1)(-\ve_0+(\be^2+\be-1)\ve_1)
\end{array}
$

\bigskip\medskip

$a^*.lr^2l^3r^5$ is the weighted sum involving the following terms and coefficients.
\\ \\
$\qquad
\begin{array}{r|l}
 \text{\rm term} & \text{\rm coefficient}
\\ \hline
 lr^2l^3r^5 & \th^*_3
\\
 r^2 &  - \frac{\vphi_5}{\be(\be^2+\be-1)}
\\
 l^3r^5 & \vphi_2
      - \vphi_1 \be^{-1}(\be+2)(-(\be+1)\ve_0+(\be^3+2\be^2-\be-1)\ve_1) 
                                                               \rule[.7cm]{0cm}{0cm}
\\
  & + \be^{-2}(\be+1)\ve_0^2 
     - \be^{-2}(\be^5+3\be^4+5\be^3+3\be^2-3\be-2)\ve_0\ve_1  
\\
  & + \be^{-2}(\be+1)(\be^2+\be-1)(\be^4+2\be^3+\be^2-2\be-1)\ve_1^2
                                                               \rule[-.5cm]{0cm}{0cm}
\\
 rl^4r^5 & - \be^{-2}(-(\be+1)\ve_0+(\be^3+\be^2-2\be-1)\ve_1)
\\
 lr^2l^4r^5 & \be^{-1}
\end{array}
$

\newpage

$a^*.rl^2r^3l^4r^5$ is the weighted sum involving the following terms and coefficients.
\\ \\
$\qquad
\begin{array}{r|l}
 \text{\rm term} & \text{\rm coefficient}
\\ \hline
 rl^2r^3l^4r^5 & \th^*_3
\\
 r^2 & \frac{\vphi_5(\be+1)(\ve_0+(\be+1)\ve_1)(-\ve_0+(\be^2+\be-1)\ve_1)}
          {\be(\be^2+\be-1)}
    - \frac{\vphi_1\vphi_5(\be+1)(-\ve_0+(\be^2+\be-1)\ve_1)}
            {\be(\be^2+\be-1)}
\\
 lr^3 & - \frac{\vphi_5(-\ve_0+(\be^2+\be-1)\ve_1)}
               {\be(\be^2+\be-1)}
\\
 rl^2r^3 & \frac{\vphi_5}{\be^2+\be-1}
\\
 l^2r^4 & \frac{\vphi_5}{\be(\be^2+\be-1)}
\\
 l^3r^5 &  \vphi_4 \be^{-1}(\be+1)^{-1}
        - \vphi_3(\be+1)^{-1}(\be+2)(-\ve_0+(\be^2-1)\ve_1)  \rule[.9cm]{0cm}{0cm}
\\
  & + \vphi_2 \be^{-1}(\be+1)^2(-\ve_0+(\be^2-1)\ve_1)
                               (-\ve_0+(\be^2+\be-1)\ve_1)
\\
  & - \vphi_1 (\be+1)^2(\be+2)\ve_1(-\ve_0+(\be^2-1)\ve_1)
                                   (-\ve_0+(\be^2+\be-1)\ve_1)
\\
  & + \be^{-1}(\be+1)^2(\be^2+\be-1)\ve_1(\ve_0+(\be+1)\ve_1)
\\
  & \qquad \times  (-\ve_0+(\be^2-1)\ve_1)(-\ve_0+(\be^2+\be-1)\ve_1)
\\
 rl^4r^5 & \vphi_3 (\be+1)^{-1}
          - \vphi_2 \be^{-1}(\be+1)(-\ve_0+(\be^2+\be-1)\ve_1)  \rule[.7cm]{0cm}{0cm}
\\
  & + \vphi_1 \be^{-1}(\be+1)^3\ve_1(-\ve_0+(\be^2+\be-1)\ve_1)
\\
  & - (\be+1)^2\ve_1(\ve_0+(\be+1)\ve_1)(-\ve_0+(\be^2+\be-1)\ve_1)
                                                               \rule[-.5cm]{0cm}{0cm}
\\
 l^2r^3l^4r^5 & \vphi_1 - \be^{-1}((\be-1)\ve_0+(2\be^2+2\be-1)\ve_1)
\end{array}
$

\section{Acknowledgment}

The authors thank John Caughman, Brian Curtin,
 Eric Egge, Mark MacLean, and Michael Lang for giving
this paper a close reading and offering many valuable
suggestions.

{
\small

}

\newpage

\noindent Kazumasa Nomura \hfil\break
\noindent College of Liberal Arts and Sciences \hfil\break
\noindent Tokyo Medical and Dental University \hfil\break
\noindent Kohnodai, Ichikawa, 272-0827 Japan \hfil\break
\noindent email: {\tt knomura@pop11.odn.ne.jp} \hfil\break


\bigskip

\noindent Paul Terwilliger \hfil\break
\noindent Department of Mathematics \hfil\break
\noindent University of Wisconsin \hfil\break
\noindent 480 Lincoln Drive \hfil\break
\noindent Madison, WI 53706-1388 USA \hfil\break
\noindent email: {\tt terwilli@math.wisc.edu }\hfil\break


\begin{thebibliography}{10}

\bibitem{hasan}
H.~Alnajjar, B.~Curtin,
A family of tridiagonal pairs,
Linear Algebra Appl. 390 (2004) 369--384.

\bibitem{hasan2}
H.~Alnajjar,  B.~Curtin.
A family of tridiagonal pairs related to
the quantum affine algebra $U\sb q(\widehat{\mathfrak{sl}}\sb 2)$,
Electron. J. Linear Algebra 13 (2005) 1--9. 

\bibitem{CurtH}
H.~Alnajjar, B.~Curtin,
A bilinear form for tridiagonal pairs of $q$-Serre type,
submitted for publication.

\bibitem{AWil}
R.~Askey, J.A.~Wilson,
A set of orthogonal polynomials that generalize the {R}acah coefficients or $6-j$ symbols. 
SIAM J. Math. Anal. 10 (1979) 1008--1016. 

\bibitem{bas1}
P.~Baseilhac,
Deformed {D}olan-{G}rady relations in quantum integrable models,
Nuclear Phys. B 709 (2005) 491--521.

\bibitem{bas2}
P.~Baseilhac,
An integrable structure related with tridiagonal algebras,
Nuclear Phys. B 705 (2005) 605--619.

\bibitem{bas5}
P.~Baseilhac,
The $q$-deformed analogue of the Onsager algebra: 
beyond the Bethe ansatz approach,
Nuclear Phys. B 754 (2006) 309--328.

\bibitem{bas6}
P.~Baseilhac,
A family of tridiagonal pairs and related symmetric functions,
J. Phys. A 39 (2006) 11773--11791. 

\bibitem{bas3}
P.~Baseilhac, K.~Koizumi,
A new (in)finite dimensional algebra for quantum integrable models,
Nuclear Phys. B 720 (2005) 325--347;
{\tt arXiv:math-ph/0503036}.

\bibitem{bas4}
P.~Baseilhac, K.~Koizumi.
A deformed analogue of Onsager's symmetry in the XXZ open spin chain,
J. Stat. Mech. Theory Exp. 2007, no. 10,  P10005, 15 pp. (electronic);
{\tt arXiv:hep-th/0507053}.

\bibitem{bas7}
P.~Baseilhac, K.~Koizumi,
Exact spectrum of the $XXZ$ open spin chain from the  $q$-Onsager algebra 
representation theory,
J. Stat. Mech. Theory Exp.
2007,  no. 9, P09006, 27 pp. (electronic).

\bibitem{Egge}
E.~Egge,
A generalization of the Terwilliger algebra,
J. Algebra 233 (2000) 213--252.

\bibitem{Ha}
B.~Hartwig,
The tetrahedron algebra and its finite-dimensional irreducible modules,
Linear Algebra Appl. 422 (2007) 219--235;
{\tt arXiv:math.RT/0606197}.

\bibitem{TD00}
T.~Ito, K.~Tanabe, P.~Terwilliger,
Some algebra related to ${P}$- and ${Q}$-polynomial association schemes,  
in: Codes and Association Schemes (Piscataway NJ, 1999), Amer. Math. Soc., 
Providence RI, 2001, pp. 167--192; 
{\tt arXiv:math.CO/0406556}.

\bibitem{shape}
T.~Ito, P.~Terwilliger,
The shape of a tridiagonal pair,
J. Pure Appl. Algebra 188 (2004) 145--160;
{\tt arXiv:math.QA/0304244}.

\bibitem{tdanduq}
T.~Ito, P.~Terwilliger,
Tridiagonal pairs and the quantum affine algebra $U_q({\widehat{sl}}_2)$,
Ramanujan J. 13 (2007) 39--62;
{\tt arXiv:math.QA/0310042}.

\bibitem{NN}
T.~Ito, P.~Terwilliger,
Two non-nilpotent linear transformations that satisfy the cubic $q$-Serre relations.
J. Algebra Appl. 6 (2007) 477--503;
{\tt arXiv:math.QA/0508398}.

\bibitem{qtet}
T.~Ito, P.~Terwilliger,
The $q$-tetrahedron algebra and its finite-dimensional irreducible modules,
Comm. Algebra 35 (2007) 3415--3439; 
{\tt arXiv:math.QA/0602199}.

\bibitem{ITdrg}
T.~Ito, P.~Terwilliger,
Distance-regular graphs and the $q$-tetrahedron algebra,
European J. Combin., in press;
{\tt arXiv:math.CO/0608694}.

\bibitem{Ev}
T.~Ito, P.~Terwilliger,
Finite-dimensional irreducible modules for the three-point $\mathfrak{sl}_2$ loop algebra,
Comm. Algebra, in press;
{\tt arXiv:0707.2313}.

\bibitem{IT:Krawt}
T.~Ito, P.~Terwilliger.
Tridiagonal pairs of Krawtchouk type,
Linear Algebra Appl. 427 (2007) 218--233;
{\tt arXiv:0706.1065}.

\bibitem{IT:aug}
T.~Ito, P.~Terwilliger.
The augmented tridiagonal algebra,
preprint.

\bibitem{KoeSwa}
R.~Koekoek, R.~F.~Swarttouw.
The Askey scheme of hypergeometric orthogonal polyomials and its   $q$-analog, 
report 98-17, Delft University of Technology, The Netherlands, 1998;
Available at
{\tt http://aw.twi.tudelft.nl/{\~{}}koekoek/askey.html}

\bibitem{N:aw}
K.~Nomura,
Tridiagonal pairs and the {A}skey-{W}ilson relations,
Linear Algebra Appl. 397 (2005) 99--106.

\bibitem{N:refine}
K.~Nomura,
A refinement of the split decomposition of a tridiagonal pair,
Linear Algebra Appl. 403 (2005) 1--23.

\bibitem{nom4}
K.~Nomura,
Tridiagonal pairs of height one,
Linear Algebra Appl. 403 (2005) 118--142.

\bibitem{NT:balanced}
K.~Nomura, P.~Terwilliger,
Balanced Leonard pairs,
Linear Algebra Appl. 420 (2007) 51--69;
{\tt arXiv:math.RA/0506219}.

\bibitem{NT:formula}
K.~Nomura, P.~Terwilliger,
Some trace formulae involving the split sequences of a Leonard pair,
Linear Algebra Appl. 413 (2006) 189--201;
{\tt arXiv:math.RA/0508407}.

\bibitem{NT:det}
K.~Nomura, P.~Terwilliger,
The determinant of $AA^*-A^*A$ for a Leonard pair $A,A^*$,
Linear Algebra Appl. 416 (2006) 880--889;
{\tt arXiv:math.RA/0511641}.

\bibitem{NT:mu}
K.~Nomura, P.~Terwilliger,
Matrix units associated with the split basis of a Leonard pair,
Linear Algebra Appl. 418 (2006) 775--787;
{\tt arXiv:math.RA/0602416}.

\bibitem{NT:span}
K.~Nomura, P.~Terwilliger,
Linear transformations that are tridiagonal with respect to
both eigenbases of a Leonard pair,
Linear Algebra Appl. 420 (2007) 198--207;
{\tt arXiv:math.RA/0605316}.

\bibitem{NT:switch}
K.~Nomura, P.~Terwilliger,
The switching element for a Leonard pair,
Linear Algebra Appl. 428 (2008) 1083--1108;
{\tt arXiv:math.RA/0608623}.

\bibitem{nomsplit}
K.~Nomura, P.~Terwilliger,
The split decomposition of a tridiagonal pair,
Linear Algebra Appl. 424 (2007) 339--345;
{\tt arXiv:math.RA/0612460}. 

\bibitem{nomsharp}
K.~Nomura, P.~Terwilliger,
Sharp tridiagonal pairs,
Linear Algebra Appl., in press;
{\tt arXiv:0712.3665}.


\bibitem{nomtowards}
K.~Nomura, P.~Terwilliger,
Towards a classification of the tridiagonal pairs,
Linear Algebra Appl., in press;
{\tt arXiv:0801.0621}.

\bibitem{nomstructure}
K.~Nomura, P.~Terwilliger,
The structure of a tridiagonal pair,
Linear Algebra Appl., submitted for publication;
{\tt arXiv:0802.1096}.

\bibitem{Rot}
J.~J.~Rotman,
Advanced modern algebra,
Prentice Hall,
Saddle River NJ 2002.

\bibitem{LS99}
P.~Terwilliger,
Two linear transformations each tridiagonal with respect to an
eigenbasis of the other,
Linear Algebra Appl. 330 (2001) 149--203;
{\tt arXiv:math.RA/0406555}.

\bibitem{qSerre}
P.~Terwilliger,
Two relations that generalize the $q$-Serre relations and the
Dolan-Grady relations,
Physics and Combinatorics 1999 (Nagoya), World Scientific Publishing,
 River Edge, NJ, 2001, pp. 377--398; 
{\tt arXiv:math.QA/0307016}.

\bibitem{TLT:array}
P.~Terwilliger,
Two linear transformations each tridiagonal with respect to an
eigenbasis of the other; comments on the parameter array.
Des. Codes Cryptogr. 34 (2005) 307--332;
{\tt arXiv:math.RA/0306291}.

\bibitem{qrac}
P.~Terwilliger,
Leonard pairs and the $q$-Racah polynomials,
Linear Algebra Appl. 387 (2004) 235--276;
{\tt arXiv:math.QA/0306301}.

\bibitem{madrid}
P.~Terwilliger,
An algebraic approach to the Askey scheme of orthogonal polynomials,
Orthogonal polynomials and special functions, 
Lecture Notes in Math., 1883, 
Springer, Berlin, 2006, pp. 255--330;
{\tt arXiv:math.QA/0408390}. 

\bibitem{aw}
P.~Terwilliger, R.~Vidunas,
Leonard pairs and the Askey-Wilson relations,
J. Algebra Appl. 3 (2004) 411--426;
{\tt arXiv:math.QA/0305356}.

\bibitem{Vidar}
M.~Vidar,
Tridiagonal pairs of shape $(1,2,1)$,
Linear Algebra Appl., in press;
{\tt arXiv:0802.3165}.
\end{thebibliography}
\end{document}